\newcommand{\R}{\mathbb{R}} 
\newcommand{\e}{\varepsilon}
\renewcommand{\le}{\leqslant}
\renewcommand{\ge}{\geqslant}
\def\Xint#1{\mathchoice {\XXint\displaystyle\textstyle{#1}}%
 {\XXint\textstyle\scriptstyle{#1}}%
{\XXint\scriptstyle\scriptscriptstyle{#1}}%
{\XXint\scriptscriptstyle\scriptscriptstyle{#1}}%
\!\int}
\def\XXint#1#2#3{{\setbox0=\hbox{$#1{#2#3}{\int}$} \vcenter{\hbox{$#2#3$}}\kern-.5\wd0}}
 \def\dashint{\Xint-}
\newcommand{\avg}{\dashint} 
\renewcommand{\phi}{\varphi}
\renewcommand{\P}{\mathbb{P}}
\newcommand{\To}{\Rightarrow}
\renewcommand{\i}{^{-1}}
\newcommand{\nm}[1]{\left\|#1\right\|} 
\newcommand{\abs}[1]{\left|#1\right|} 
\newcommand{\f}{\frac}
\renewcommand{\bar}[1]{\overline{#1}}
\newcommand{\lec}{\lesssim}
\newcommand{\Th}{\Theta}
\renewcommand{\th}{\theta}
\newcommand{\bound}[1]{\left.#1\right|_\partial}
\newcommand{\paren}[1]{\left(#1\right)}
\newcommand{\bracket}[1]{\left[#1\right]}
\newcommand{\braces}[1]{\left\{#1\right\}}
\newcommand{\setspace}{\thinspace}
\newcommand{\set}[2]{\left\{ #1 \setspace : \setspace #2\right\}} 
\newtheorem{theorem}{Theorem}
\newtheorem{lemma}[theorem]{Lemma}
\newtheorem{proposition}[theorem]{Proposition}
\newcommand{\HH}{\mathbb{H}}
\newcommand{\af}{\mathfrak{a}}
\renewcommand{\a}{\alpha} 
\renewcommand{\b}{\beta} 
\renewcommand{\aa}{{\alpha '}} 
\newcommand{\aao}{\alpha_0'}
\newcommand{\bb}{{\beta '}} 
\renewcommand{\AA}{\mathcal{A}} 
\newcommand{\AAt}{\mathcal{A}_t}
\newcommand{\ang}{\nu}
\newcommand{\hdenomconst}{2 i}
\newcommand{\hdenomconstreal}{2}
\renewcommand{\vec}[1]{{\bf #1}}
\begin{document}

\title{A Priori Estimates for Two-Dimensional Water Waves with Angled Crests}
\author{Rafe H.~Kinsey\thanks{R.H.~Kinsey is supported in part by a University of Michigan Rackham Regents Fellowship and by NSF grants DMS-0800194 and DMS-1101434.} \and  Sijue Wu\thanks{S.~Wu is supported in part by NSF grant  DMS-1101434.}}
\newcommand{\Addresses}{{\bigskip\footnotesize

R.H.~Kinsey, \textsc{Department of Mathematics, University of Michigan, Ann Arbor, MI 48109}
\par\nopagebreak
  \textit{E-mail address}: \texttt{rkinsey@umich.edu  }

  \medskip

  S.~Wu (Corresponding author), \textsc{ Department of Mathematics, University of Michigan, Ann Arbor, MI 48109   }\par\nopagebreak
  \textit{E-mail address}: \texttt{sijue@umich.edu}
  }}

\date{}

\maketitle

\begin{abstract}
We consider the two-dimensional water wave problem in the case where the free interface of the fluid meets a vertical wall at a possibly non-right angle; and where the free interface can be non-$C^1$ with angled crests.  We assume that the air has density zero, the fluid is inviscid, incompressible,  irrotational, and subject to the gravitational force, and the surface tension is zero.
In this regime,  only a degenerate Taylor stability criterion $-\f{\partial P}{\partial \vec{n}} \ge 0$ holds, with degeneracies at the singularities on the interface and at the point where it meets the wall if the angle is  non-right. We construct a low-regularity energy functional and prove an a priori estimate. Our estimate differs from existing work in that it doesn't require a positive lower bound for $-\f{\partial P}{\partial \vec{n}}$.
\end{abstract}

\tableofcontents

\section{Introduction}\label{sec:intro}
\subsection{Water wave problems}\label{sec:1}

A class of water wave problems concerns the dynamics of the free surface separating an  incompressible fluid, under the influence of gravity,  from a zero-density region (air).

Let $\Omega(t)$ be the fluid region, $\Sigma(t)$ be the free surface between the fluid and the air, and $\Upsilon$, if it exists, be the fixed rigid boundary of $\Omega(t)$, for time $t \ge 0$; thus $\partial(\Omega(t)) = \Sigma(t) \cup \Upsilon$.  We assume that the fluid is inviscid, incompressible and irrotational, and we neglect surface tension. 
Assume that the fluid density is $1$. If the gravity field is $-\vec{j}$, the governing equations of motion are
\begin{align}
  \label{eq:1}  &\vec{v}_t + \vec{v} \cdot \nabla \vec{v} = - \vec{j} - \nabla P \quad \text{ on } \Omega(t),\\
 \label{eq:2} &\text{div } \vec{v} = 0,\quad
  \text{curl } \vec{v} = 0 \qquad \text{ on } \Omega(t), \\
\label{eq:4} &P = 0 \quad \text{ on } \Sigma(t), \\
  \label{eq:5} &(1,\vec{v}) \text{ is tangent}\text{ to the free surface }(t,\Sigma(t)),\\
 \label{eq:6}   &\vec{v} \text{ is tangent to}\text{ the fixed boundary } \Upsilon \text{ (if it exists)},
\end{align}
where $\vec{v}$ is the fluid velocity and $P$ is the fluid pressure.  
There is an important condition for these problems:
\begin{equation}\label{taylor}
-\frac{\partial P}{\partial\bold n}\ge 0
\end{equation}
pointwise on the interface, where $\bold n$ is the outward unit normal to the free interface 
$\Sigma(t)$ \cite{taylor};
it is well known that when surface tension is neglected and the Taylor sign condition \eqref{taylor} fails, the water wave motion can be subject to the Taylor instability \cite{ taylor, ebin, bhl}.

The study of water waves dates back centuries. Early mathematical works include  Stokes \cite{stokes}, Levi-Civita \cite{levi-civita} and G.I.~Taylor \cite{taylor}.  Nalimov \cite{nalimov}, Yosihara \cite{yosihara}, and Craig \cite{craig} obtained local in time existence and uniqueness of solutions for the 2-D water wave problem for small Sobolev data.  In 1997 Wu \cite{wu1997} showed, for the infinite depth two-dimensional water wave problem   \eqref{eq:1}-\eqref{eq:5}  with $\Upsilon = \emptyset$, 
that the strong Taylor stability criterion 
\begin{equation}\label{taylor-s}
-\f{\partial P}{\partial \vec{n}} \ge c_0 > 0,
\end{equation}
 always holds for $C^{1+\epsilon}$  interfaces and that the problem is locally well-posed in Sobolev spaces $H^s, s \ge 4$, for arbitrary data.  In \cite{wu1999} Wu proved a similar result in three dimensions. Since then, there have been numerous results on local well-posedness in both two and three dimensions, under the assumption  \eqref{taylor-s}, for the water wave equations with nonzero vorticity, with a fixed bottom, and with nonzero surface tension, cf. \cite{christodoulou-lindblad, iguchi, ambrose-masmoudi, lannes, lindblad, coutand-shkoller, shatah-zeng, zhang-zhang}. Alazard, Burq \& Zuily \cite{alazard-b-z, alazard-b-z-strichartz} proved local well-posedness of the problem in low regularity Sobolev spaces where the interfaces are only in $C^{3/2}$. Hunter, Ifrim \& Tataru \cite{hunter-ifrim-tataru} obtained a low-regularity result that improves on \cite{alazard-b-z} for 2-D water waves.    In addition, in the past several years Wu \cite{wu2009} \cite{wu2011}, Germain, Masmoudi \& Shatah \cite{germain-masmoudi-shatah}, Ionescu \& Pusateri \cite{pusateri-ionescu} and Alazard \& Delort \cite{alazard-delort} 
have proved results showing almost global or  global well-posedness in two and three dimensions for sufficiently small, smooth and localized initial data. See \cite{cf} for other related developments.

All these work either prove or assume the strong Taylor condition \eqref{taylor-s}, and assume either no fixed boundary or else a fixed boundary of a positive distance away from the free interface. And the lowest regularity 
considered are $C^{3/2}$ interfaces. 

Consider  the water wave equation \eqref{eq:1}-\eqref{eq:6} in two space dimensions. In the case where the fixed boundary $\Upsilon$ is a vertical wall,  by Schwarz reflection, the water wave problem \eqref{eq:1}-\eqref{eq:6} can be reduced to the one without fixed boundary in the expanded symmetric domain. Assume for example that the fluid is in a region in $\{x\ge 0\}$, bounded by 
 the fixed boundary $\Upsilon: x=0$ and the free surface.    
We define,  for $\vec{v} = (v_1, v_2)$ and $x >0$, that
 \begin{equation}
  \label{eq:366}
  \vec{v}(-x, y, t) = (-v_1(x, y, t), v_2(x, y, t)),\qquad P(-x, y, t)=P(x,y, t).
\end{equation}
Notice that \eqref{eq:6} implies that $v_1(\cdot, t) \equiv 0$ on $ \Upsilon$.  It is easy to check that equations \eqref{eq:1}-\eqref{eq:5} continue to hold in the expanded domain. Assume that the interface makes an angle $\nu$ with the vertical wall. When $\nu\ne \frac \pi2$ the extended interface is non-$C^1$, with an angled crest in the middle, see Figure \ref{fig:a}. In \cite{alazard-b-z-2} Alazard, Burq \& Zuily studied the case where the strong Taylor sign condition \eqref{taylor-s} holds and the angle $\nu=\frac\pi 2$. We investigate in this paper the question of whether the water wave problem \eqref{eq:1}-\eqref{eq:6} admits non-right angles $\nu$ at the wall, and more generally, whether equations \eqref{eq:1}-\eqref{eq:6} admit non-$C^1$ interfaces.\footnote{The reflection/periodization procedure described here dates back to \cite{boussinesq}. In \cite{alazard-b-z-2} it was shown  that in order for the strong Taylor sign condition \eqref{taylor-s} to hold, it is necessary that the angle $\nu=\frac\pi 2$.  
}
\begin{figure}[H]
\centering
\includegraphics[width=2in]{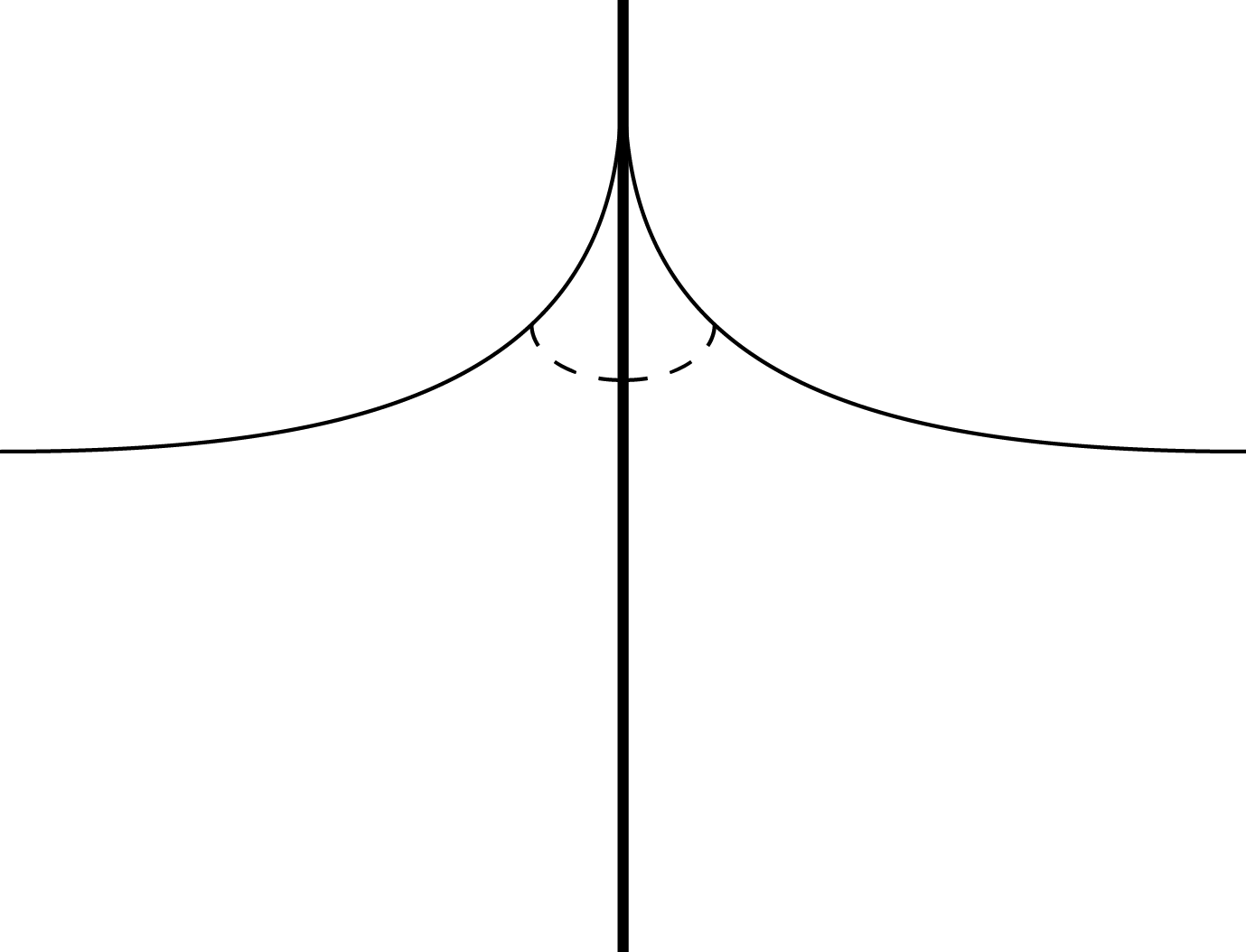}
\caption[Symmetric Angled Crests]{Under a Schwarz reflection, a non-right angle at a vertical wall corresponds to a symmetric angled crest in the middle of surface.}
\label{fig:a}
\end{figure}

To set up our problem we consider a fixed rigid boundary consisting of two vertical walls at $x = 0, 1$, with water of infinite depth in between the walls.  We assume that  the fluid region $\Omega_0(t) \subset [0,1] \times (-\infty, c)$ for some $c < \infty$. Using Schwarz reflection  we expand $\Omega_0(t)$ across the $y$-axis, arriving at a symmetric fluid domain $\Omega(t)\subset [-1,1] \times (-\infty, c)$.  We shall henceforth study the water wave equation \eqref{eq:1}-\eqref{eq:6} in $\Omega(t)$,  with fixed {\it walls} $\Upsilon$ at $x=-1, 1$. 
 We denote the angle at $x = 1$ by $\ang$, and call the corner of the free surface at the wall $x=\pm 1$ just by the {\it corner}.  We assume that
\begin{equation}
  \label{eq:620}
  \vec{v}(x,y,t) \to 0 \qquad \text{ as } y \to -\infty.
\end{equation}
\begin{figure}[H]
\centering
\includegraphics[width=2in]{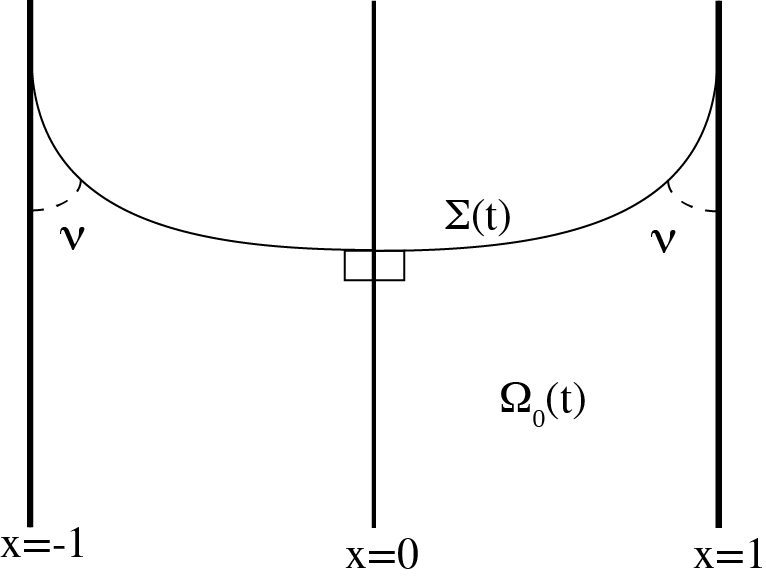}
\caption[The Geometric Framework]{The fluid domain $\Omega_0(t)$ has solid walls at $x = 0, 1$.  The figure shows the reflected domain $\Omega(t)$ under a Schwarz reflection across $x=0$, with a symmetric free surface $\Sigma(t)$. We denote the angle at $x = 1$ by $\ang$.}
\label{fig:b}
\end{figure}

We note that the extended fluid satisfies the periodic boundary condition
\begin{equation}\label{periodic}
\vec{v}(-1,y,t)=\vec{v}(1,y,t),\quad P(-1,y,t)=P(1,y,t).
\end{equation}

 A  
 serious challenge 
 in allowing non-$C^1$ interfaces with angled crests and non-right angles at the wall is that in this case, 
only a degenerate Taylor stability inequality $-\f{\partial P}{\partial \vec{n}} \ge 0$ holds, with degeneracy $-\f{\partial P}{\partial \vec{n}}= 0$ occurs at the singularity on the interface and at the wall when 
the interface meets the wall with a non-right angle.\footnote{We assume the acceleration is finite.} From existing work (cf. \cite{wu1997, wu1999, lannes, lindblad, alazard-b-z} etc.) we know
the problem of solving the water wave equation \eqref{eq:1}-\eqref{eq:6} can be reduced to solving a quasilinear equation of the interface $z=z(\alpha,t)$, of type
\begin{equation}\label{quasi1}
\partial_t^2\frak u+ a \nabla_{\bf n} \frak u=f(\frak u, \partial_t \frak u),
\end{equation}
where $a=-\frac{\partial P}{\partial \bf{n}}$, and $\nabla_{\bf n}$ is the Dirichlet-to-Neumann operator; 
when the strong Taylor sign condition \eqref{taylor-s} holds,  equation \eqref{quasi1} is of the hyperbolic type with the right hand side consisting of lower order terms, and the Cauchy problem can be solved using classical tools.  In our case however,  only the degenerate Taylor sign condition \eqref{taylor} holds, with the second term on the left hand side of \eqref{quasi1} being degenerative at the singularities; it is not clear if the right hand side of equation \eqref{quasi1} is still of the lower order, and what the type of the equation is. This is the main difficulty of the problem.

In this paper, we construct an energy functional and prove an a priori estimate  for solutions of the water wave equation.  The novelty is that our a priori estimate {\em does not} require a positive lower bound for $-\f{\partial P}{\partial \vec{n}}$. We follow the general approach of Wu's earlier work \cite{wu1997, wu1999} and \cite{wu2009}, in reducing the water wave problem to an equation on the free surface,  and analyzing the free surface equation in the Riemann mapping variable. As an immediate consequence we obtain that, provided that the acceleration is finite,  the water wave equation \eqref{eq:1}-\eqref{eq:6} only allows for angles at the wall $\ang\le \frac \pi 2$ and similarly, it only allows for interior angles at angled crests to be no more than $\pi$. Using the Riemann mapping we obtain a precise characterization of the singularities of the interface.  Two elements  played significant roles in the construction of our energy: one is an interface dependent weight function,  which is zero at the singularities; and the other is  
the convection-driven self-similar solutions constructed in \cite{wu:self},  which has an angled crest type singularity. 
 Our energy is finite for all interfaces and velocities in Sobolev spaces $H^s(\mathbb R)$, $s\ge 3$, it is also finite  for interfaces making non-right angles at the wall, and for interfaces with angled crests. Most importantly, it is finite for the self-similar solutions constructed in \cite{wu:self}.

The a priori estimate obtained in this paper holds for general periodic water wave equations \eqref{eq:1}-\eqref{eq:5}-\eqref{periodic}. An analogous energy functional can be constructed and the a priori estimate can be proved for the whole line case using a similar approach.

Our energy inequality is a crucial step towards proving local existence, uniqueness and stability in this framework. This will be the content of an upcoming paper. 

\vskip.1in
{\bf Remark}: This paper was first written and posted on arXiv in June 2014 \cite{rkwu}. Since then, Wu has proved the local existence in the energy class constructed in this paper and posted the work on arXiv in February 2015 \cite{wu2015}. Most recently in December 2016, Thibault de Poyferr\'e \cite{dePoy} obtained an a priori estimate for the water wave equations \eqref{eq:1}-\eqref{eq:6} with an emerging bottom that interacts with the free surface. One of the important assumptions in \cite{dePoy} is that the strong Taylor sign condition \eqref{taylor-s} holds.

{\bf Acknowledgement}: The second author would like to thank Edriss Titi, Roberto Camassa and Bob Pego for the discussion in clarifying a statement in the original version of the paper.

\subsection{Outline of the paper}\label{sec:4}
In the next subsection, \S \ref{sec:5}, we present some of the notations and conventions  
and introduce the function spaces and norms to be used in the paper. Then, in \S \ref{sec:6} and \S \ref{sec:11}, we derive the free surface equations of the water wave problem \eqref{eq:1}-\eqref{eq:6} in Lagrangian and Riemann mapping coordinates,  following \cite{wu1997} and \cite{wu2009}.  
  The derivation in \S\ref{sec:6} and \S\ref{sec:11} is carried out under the assumption that the interface, velocity and acceleration are smooth. In \S \ref{sec:energy}, we define the energy (in \S \ref{sec:35}) and state our main result, the a priori inequality (in \S \ref{sec:38}),  for solutions of the free surface equations.  We begin the proof in \S \ref{sec:39}, and then in \S \ref{sec:42} we outline the remainder of the proof, which takes up sections \S \ref{sec:43} through \S \ref{controlE1}.

In \S \ref{sec:36e} we give a characterization of the energy in terms of the  velocity and position of the free surface, and we discuss the types of singularities allowed when our energy is finite.

 The derivation of the free surface equations and the proof of the main result rely on understanding the boundary behaviors, the holomorphicity, and the means of various quantities; we leave these, as well as some basic identities and inequalities used in the proof of the main result, to appendices \S\ref{sec:hmb}-\S \ref{sec:tech}. The reader may want to read these appendices before certain sections in the main text. We have two additional appendices that might be useful to the reader. In \S \ref{notation}, we provide an overview of the notation used in the paper, with cross references to where everything was initially defined.  In \S \ref{quantities}, we list various quantities controlled by the energy, again with cross references.

\subsection{Notations, conventions and function spaces}\label{sec:5}
We will define most of our notations throughout the text, as we introduce our various quantities.  Here we only list some general conventions and notations.

Since we are in two dimensions, we will often work in complex coordinates $(x,y) = x + iy$.  We will use $\Re z:=x$ and $\Im z:=y$  to represent the real and imaginary parts, respectively, of  $z=x+iy$.

Compositions are always in terms of the spatial variable. For example, for $f=f(\alpha, t)$, $g=g(\alpha,t)$, we define $f\circ g=f\circ g(\alpha,t):=f(g(\alpha,t),t)$. An expression $f_{x}(x,y)$ means $\partial_xf(x,y)$; we occasionally use the notation $f'$, which is always the spatial derivative in whatever coordinates we are using.

Once we have reduced the water wave equations to an equation on the interface, we will primarily be working with the spatial domain $I := [-1,1]$. We will often refer to the ``boundary''; this refers to what happens at $\pm 1$. We write $\bound{f} := f(1) - f(-1)$. We will use 
\begin{equation}
  \label{eq:660}
  \avg_I f := \f{1}{\abs{I}} \int_I f(x) dx = \f{1}{2} \int_{-1}^1 f(x) dx
\end{equation}
for the mean of a function $f$. Here, and elsewhere for other integrals,  
when there is no risk of ambiguity, we will often drop the subscript $I$.

  We define
\begin{equation}
  \label{eq:659}
  [A,B] := AB - BA.
\end{equation}
We will use the following notation as an abbreviation for a type of higher-order Calderon commutator:
\begin{equation}
  \label{eq:624}
  [f,g;h](\aa) := \frac\pi{4i}\int \f{f(\aa) -f(\bb)}{\sin(\f{\pi}{2}(\aa-\bb))} \f{g(\aa) - g(\bb)}{\sin(\f{\pi}{2}(\aa-\bb))} h(\bb) d\bb.
\end{equation}
We will often deal with one dimensional singular integrals of the type $\int k(\aa,\bb)\,d\bb$  where $|k(\aa,\bb)|= \frac{O(1)}{|\aa-\bb|}$, in this case, the  integral $\int k(\aa,\bb)\,d\bb$ is defined to be the principle valued integral. 

We will use $C$ as a placeholder to refer to a universal constant, possibly varying from line to line. We will also often use the notation $f \lec g$, which means that there exists some universal constant $C$ such that $f \le C g$.

We will at several points have long series of identities or inequalities.  When we say ``on the RHS'' of an equation block with a string of multiple equalities or inequalities, we mean all the terms on the right hand side of the last equality or inequality sign in the string. Similarly, when we say ``on the LHS,'' we mean all the terms to the left of the very first equality or inequality sign in  the string of equalities and inequalities.  We have tried to avoid saying ``on the $n$th line'' when any of the mathematical formulas splits into more than one typographic line, but if we have, ``line'' refers to the mathematical, not typographic, line.

We have tried to give extensive cross references for each time we use a result or estimate.  We tend to refer to equation numbers, rather than propositions, since it seems that these will be easier to find as cross references. When we refer to an equation number as part of a proposition, we are of course referring to the whole proposition, including any conditions assumed.

When we are deriving estimates, we sometimes use the cross references within our equations, e.g.:
\begin{equation}
  \label{eq:661}
  \begin{aligned}
    f  & \le g
    \\ & \le h
  \end{aligned}
\end{equation}
and
\begin{equation}
  \label{eq:662}
  \begin{aligned}
    h & \lec j + f
    \\ & \lec j + \eqref{eq:661}
    \\ & \lec j + h.
  \end{aligned}
\end{equation}
This means \eqref{eq:661} is used to obtain \eqref{eq:662}. We hope this will help the reader locate the previous estimate or estimates. 

In several of our more complicated estimates, we will split terms up $f = I + II$ and then $I = I_1 + I_2$, $I_1 = I_{11} + I_{12}$, etc.  Such notation will be {\em local to each section}. There is an ambiguity between the use of $I$ as a placeholder, its use as the identity operator, and its use as $I:= [-1,1]$. It should be clear from the context which one is being used.

We now introduce  the function spaces and norms we will use.   We will work with functions $f(\cdot,t)$ defined on $I=[-1,1]$. Except when necessary to avoid ambiguity, we neglect to write the time variable; when it is not specified,  function spaces and norms are in terms of the spatial variable.

We say $f\in C^k(J)$, $J=(-1,1)$ or $[-1,1]$,  if for every $0 \le l \le k$, $\partial_x^l f$ is a continuous function on the interval $J$. 
We say $f \in C^k(S^1)$ (i.e., periodic $C^k$) if for every $0 \le l \le k$, 
$\partial_x^l f\in C^0[-1,1]$ and 
$\partial_x^lf(1) = \partial_x^lf(-1)$. ($\partial_x^lf$ at the endpoints $1$ or $-1$ is the derivative from the left or right, respectively.) Note in particular that saying $f \in C^0(S^1)$ implies that $\bound{f}=0$.

For $1 \le p < \infty$, we define our $L^p$ spaces by the norms
\begin{equation}
  \label{eq:395}
  \nm{f}_{L^p} := \nm{f}_{L^p(I)}:=\paren{\int_{I} \abs{f}^p}^{1/p},
\end{equation}
and we define $L^\infty$ analogously. We will sometimes deal with weighted $L^p$ spaces. We write
\begin{equation}
  \label{eq:428}
  \nm{f}_{L^p( \omega)} = \nm{f}_{L^p(\omega dx)} := \paren{\int_I \abs{f(x)}^p \omega(x) dx}^{1/p}
\end{equation}
for weights $\omega \ge 0$.
Whenever we write $L^p$, we will be referring to $L^p(I)$, in the spatial variable. For weighted $L^p$ spaces, we always write $L^p(\omega)$ or $L^p(\omega dx)$,   where $\omega$ is the weight function.
 
We now define the   periodic Sobolev space $H^k(S^1)$. Let $f\in L^1(I)$, and $\tilde f$ be the periodic extension of $f$ to the whole line: $\tilde f(x+2)= \tilde f(x)$ for all $x\in \mathbb R$, and $\tilde f(x):=f(x)$ for $x\in I$.  We say $f\in   H^k(S^1)$ if $\tilde f\in H^k\paren{(-3, 3)}$; and we define
\begin{equation}
\nm{f}_{H^k( S^1)}  := \paren{\sum_{j=0}^k \int_I \abs{\partial_x^j f(x)}^2dx}^{1/2}.
\end{equation}
By Sobolev embedding, we know $H^{k+1}(S^1)\subset C^k(S^1)$, for $k\ge 0$.

We define the homogeneous half-derivative space $\dot{H}^{1/2}$ by the norm
\begin{equation}
  \label{eq:23}
  \nm{f}_{\dot{H}^{1/2}} := \paren{\frac\pi 8 \iint_{I \times I} \f{\abs{f(\aa) - f(\bb)}^2}{\sin^2(\f{\pi}{2}(\aa - \bb))} d\aa d\bb}^{1/2}.
\end{equation}

Through the remainder of the paper, when we say the boundary value of a function $G$ defined on the fluid region $\Omega(t)$ (resp., on $P^-:=I\times (-\infty, 0]$), we mean the value of  $G$ on the free surface (resp., on $I\times \{0\}$); we do not include the value on vertical boundaries $x=\pm1$. Except when there's a risk of confusion, we will slightly abuse notation and say that a function $f$ on the free surface (resp., on $I\times \{0\}$) is ``holomorphic'' (or ``antiholomorphic''); what we mean, precisely, is that it is the boundary value of a function that is holomorphic (or antiholomorphic) in the fluid region $\Omega(t)$ (resp., on $P^-$).
 
In the next two sections, \S\ref{sec:6} and \S\ref{sec:11}, we assume the interface, velocity, acceleration and their time derivatives are sufficiently smooth.

\section{The free surface equation in the Lagrangian coordinate}\label{sec:6}

Let $z(\alpha, t)=x(\a,t) + i y(\a,t)$, $\a\in I=[-1,1]$  be a parametrization  of the free surface $\Sigma(t)$ in  the {\em Lagrangian} variable $\a$, i.e., $z_t(\a,t) = \vec{v}(z(\a,t),t)$ is the velocity and $z_{tt}$ is the acceleration of the particle occupying position $z(\a,t)$ at time $t$. 
Along the free surface,  the Euler equation \eqref{eq:1} is $z_{tt} + i = -\nabla P$. By equation \eqref{eq:4},  we know $\nabla P$ is orthogonal to the free surface.  Since $iz_\a$ is normal to the free surface,  we can rewrite our main equation as
\begin{equation}
  \label{eq:8}
  z_{tt} + i = i \mathfrak{a} z_\a,
\end{equation}
where 
\begin{equation}
  \label{eq:28}
  \mathfrak{a} = - \f{\partial P}{\partial \vec{n}} \f{1}{\abs{z_\a}} \in \R
\end{equation}
for $\f{\partial P}{\partial \vec{n}}:=\vec{n}\cdot \nabla P$ the outward-facing normal derivative. The incompressibility and irrotationality condition \eqref{eq:2} and the periodicity \eqref{periodic} imply that the conjugate velocity $\bar{\bf v}$ is periodic holomorphic; therefore $\bar{z}_t$ is the boundary value of a periodic holomorphic function in the fluid region.

\subsection{The quasilinear equation}\label{sec:9}
We henceforth focus on the equations on the free surface.\footnote{We may solve for 
 the velocity on $\Omega(t)$ from its boundary values (including the condition that it goes to zero as $y \to -\infty$), and then solve for the pressure from the velocity. The free surface equations is equivalent to the 
water wave system \eqref{eq:1}-\eqref{eq:6} in the smooth regime. 
} As in \cite{wu1997} and following works, we differentiate  \eqref{eq:8} with respect to time and take conjugates, turning it into the quasilinear equation\footnote{We call it ``quasilinear'' because in the classical situation \cite{wu1997}, this equation is quasilinear with the RHS the lower-order term. However, in our setting, due to the degeneracy of $-\f{\partial P}{\partial \vec{n}}$ we do not know a priori that this is still the case; only by our proof do we show that the RHS is, indeed, lower-order and that \eqref{eq:11} is in fact quasilinear. All references to \eqref{eq:11} and related equations being ``quasilinear'' should be interpreted with this in mind.}
\begin{equation}
  \label{eq:11}
  \bar{z}_{ttt} + i \af \bar{z}_{t\a} = -i \af_t \bar{z}_\a,
\end{equation}
where we continue to have $\bar{z}_t$  the boundary value of a periodic holomorphic function. This is the basic equation we will work with throughout the paper.

The holomorphicity of $\bar{z}_t$ implies that $i\f{1}{\abs{z_\a}} \partial_\a\bar{z}_t=\nabla_{\bf n} \bar{z}_t$,  
where $\nabla_{\bf n}$ is the Dirichlet-to-Neumann operator. We know $\nabla_{\bf n}$ is a positive operator. 

In \cite{wu1999} and \cite{wu2009}, coordinate-independent formulas for the RHS were derived, using the holomorphicity of $\bar{z}_t$ and the invertibility of the double-layer potential.  We will instead follow the original approach of \cite{wu1997}, relying on the Riemann mapping version of the equation to derive the RHS.  We do so in \S \ref{sec:28}.

\subsection{A special derivative}\label{sec:10}

We introduce a special derivative
\begin{equation}
  \label{eq:367}
  D_\a := \f{1}{z_\a}\partial_\a.
\end{equation}
If $g(\a,t) = G(z(\a,t),t)$, and $G$ is holomorphic, then
 $ \partial_\a g = (G_z \circ z) z_\a$, and
 $$D_\a g=( \partial_zG)\circ z=- i (\partial_y  G)\circ z.
 $$
So $D_\a^k g$ is the boundary value of holomorphic function $\partial_z^k G$, provided $G$ is holomorphic. $D_\a^k g$ is in addition periodic for any $k \ge 1$, so long as $G$ is periodic and holomorphic.

We may therefore conclude from the fact that $\bar z_t$ is the boundary value of periodic holomorphic function $\bar {\vec{v}}$ that  $D_\a^k \bar{z}_t$ is the boundary value of periodic holomorphic function $\partial_z^k \bar {\vec{v}}$ in $\Omega(t)$. 

We will use $D_\a$ as the spatial derivative in constructing higher-order energies.  In addition to preserving   holomorphicity and periodic boundary behavior, it transforms well under the Riemann mapping, to be discussed in the next section.

\section{The Riemann mapping version}\label{sec:11}
We now analyze the  water wave equations \eqref{eq:8}, \eqref{eq:11} using the Riemann mapping that flattens out the curved free interface.\footnote{
To the best of our knowledge, \cite{wu1997} was the first paper that used Riemann mapping to analyze the quantities $A_1, \AAt$ and $h_t\circ h^{-1}$ in the water wave equations and prove the wellposedness of 2-d water waves in Sobolev spaces. Using Riemann mapping, \cite{hunter-ifrim-tataru} later carried out a similar analysis as in \cite{wu1997, wu2009} and re-derived the formulas for the quantities $A_1, h_t\circ h^{-1}$. Here we follow the approach of \cite{wu1997, wu2009} to analyze the quantities $A_1, \AAt$ and $h_t\circ h^{-1}$. Riemann mapping is a common tool in the study of 2d potential flows. In water waves, Ovsjannikov \cite{ovs} used  Riemann mapping  to justify the shallow water equation from  equations \eqref{eq:1}-\eqref{eq:5}  in the analytic class; Zakharov et. al. \cite{dksz} used  Riemann mapping  to carry out efficient 
numerical computations for the water waves. 
} 
The Riemann mapping version of the equations offers a key advantage, because the Hilbert transform associated to the periodic domain $P^-$ is  
\begin{equation}
  \label{eq:174a}
  \HH f(\aa) := \f{1}{\hdenomconst}  \int_I \cot(\f{\pi}{2}(\aa - \bb)) f(\bb) d\bb.
\end{equation}
Since $\HH f \in i \R$ for $f$ real-valued, this allows us to invert the operator $(I-\HH)$ on purely real (resp., purely imaginary) functions by taking real (resp., imaginary) parts.

\subsection{The Riemann mapping variables and notations}\label{sec:12a}

Let 
\begin{equation}
  \label{eq:414}
\Phi: {\Omega(t)} \to P^- := \set{(x,y)}{x \in [-1,1], y \le 0}. 
\end{equation}
be the unique Riemann mapping that takes the two upper corners of the interface at the walls $x=-1, 1$ to $(-1,0)$ and $(1,0)$, and $\infty$ to $\infty$.  
We know  $\Phi$ takes the free surface $\Sigma(t)$ to $I\times \{0\}$,   wall to  wall,  $\Phi_z$ is periodic: $\Phi_z(-1, y,t)=\Phi_z(1,y,t)$, and
\begin{equation}\label{riemannmean}
\lim_{\Im z\to-\infty} \Phi_z(z, t)=1.
\end{equation}
 Let 
\begin{equation}
  \label{eq:175}
\aa=  h(\a,t) := \Phi(z(\a,t),t) : I \to I
\end{equation}
be the change of coordinates taking the Lagrangian variable $\a$ to the Riemann mapping variable $\aa$, and let $h\i$ be the spatial inverse of $h$, defined by $h(h\i(\aa,t),t) = \aa$. We define 
\begin{equation}
  \label{eq:107}
  Z(\aa,t) := z \circ h\i (\aa,t) = z(h\i(\aa,t),t).
\end{equation}
$Z=Z(\aa,t)$ is a parametrization of the free surface $\Sigma(t)$ in Riemann mapping variable $\aa$. 
We  write
\begin{equation}\label{eq:370}
\begin{aligned}
  & Z_t := z_t \circ h\i; \quad Z_{tt} := z_{tt} \circ h\i;\\ Z_{,\aa} :=& \partial_\aa Z;  \quad  Z_{t,\aa}: =\partial_\aa Z_t;  \quad  Z_{tt,\aa}: =\partial_\aa Z_{tt}; \text{ etc.}
  \end{aligned}
\end{equation}
and
\begin{equation}
  \label{eq:632}
\AA := (\af h_\a) \circ h\i; \quad   \AAt := (\af_t h_\a) \circ h\i.
\end{equation}

Observe that $Z = z \circ h\i = \Phi\i$. Therefore
\begin{equation}
  \label{eq:63}
 Z_{,\aa}(\aa,t) = \partial_\aa (\Phi\i(\aa,t)), \quad\text{ and }\quad \f{1}{Z_{,\aa}} = \Phi_z \circ Z;
\end{equation}
so $Z_{,\aa}(\aa,t)$ and $\f{1}{Z_{,\aa}}(\aa,t)$ are boundary values of the periodic holomorphic functions $\paren{\Phi^{-1}}_z(\cdot, t)$ and $\Phi_z(\cdot, t)$. 

Observe also that under the change of variables
$(D_\a f) \circ h\i = \f{1}{Z_{,\aa}} \partial_\aa (f \circ h\i)$.
We  define
\begin{equation}
  \label{eq:124}
  D_\aa := \f{1}{Z_{,\aa}} \partial_\aa.
\end{equation}

\subsection{An assumption at the spatial infinity}\label{assume}
For the derivation of the water wave equations in Riemann mapping variable, besides assuming all the quantities involved are sufficiently smooth, we assume that the Riemann mapping $\Phi$ satisfies
\begin{equation}\label{riemanntime}
\lim_{\Im z \to -\infty} \Phi_t\circ \Phi^{-1}(z,t)=0.
\end{equation}

In \cite{wu1997} analogous assumptions were made  to derive the quasilinear equation in Riemann mapping variable for the whole line case; it was then shown that the quasilinear equation is well-posed in Sobolev spaces and the solutions of the quasilinear equation give rise to solutions of the water wave equation \eqref{eq:1}-\eqref{eq:5}. Similar results can be proved for the periodic case as considered in this paper.


\subsection{The water wave equations in the Riemann mapping variable}\label{sec:26}
We now derive  the water wave equations in Riemann mapping variable. We follow the approach of \cite{wu1997}, although we work with the real and imaginary parts together instead of separating $Z_t = X_t + iY_t$ into real and imaginary parts.

Beginning with the conjugated form of our equation \eqref{eq:8} and with \eqref{eq:11},
 we precompose both sides with $h\i$ to get the free surface equations in the flattened Riemann mapping coordinate:
\begin{equation}
  \label{eq:179}
  \bar{Z}_{tt} - i = - i \AA \bar{Z}_{,\aa};
\end{equation}
\begin{equation}
  \label{eq:178}
   \bar Z_{ttt} + i \AA \bar Z_{t,\aa} = -i \AAt  \bar Z_{,\aa},
\end{equation}
where $\bar Z_t$ is the boundary value of the periodic holomorphic function $\bar{\vec{v}}\circ \Phi^{-1}$. 
 By chain rule, the quantities $\bar Z_{tt}$, $\bar Z_{ttt}$ are related to $\bar Z_t$ by 
$$\bar Z_{tt}=(\partial_t+\frak b\partial_\aa)\bar Z_t,\qquad \bar Z_{ttt}=(\partial_t+\frak b\partial_\aa)^2\bar Z_t,$$
where
\begin{equation}\label{b}
\frak b:=h_t\circ h^{-1}.
\end{equation}
 
The following proposition gives a characterization of the boundary value of  a periodic holomorphic function on $P^-$.
\begin{proposition}\label{prop:hilbe}
a.  Let $g \in L^1(I)$. 
  Then $g$ is the boundary value of a holomorphic function $G$ on $P^-$ satisfying $G(-1+iy) = G(1+iy)$ for all $y < 0$ and $G(x+iy) \to c_0$ as $y \to -\infty$ if and only if 
  \begin{equation}
    \label{eq:1571}
    (I-\HH) g = c_0.
  \end{equation}
Moreover, $c_0 = \avg_I g$.

b. Let $ f \in L^1(I)$. Then $\mathbb P_H  f:=\frac12(I+\mathbb H)  f$ is the boundary value of a periodic holomorphic function $\mathcal F$ on $P^-$, with $\mathcal F(x+iy)\to \frac12 \avg_I  f$ as $y\to -\infty$.

\end{proposition}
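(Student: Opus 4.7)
My plan is to exploit the diagonal action of the periodic Hilbert transform $\HH$ on the Fourier basis $\{e^{i\pi n\aa}\}_{n\in\Z}$ of $L^p(I)$, $p>1$, which is the natural tool on a periodic strip. The key ingredient is a single mode computation, namely $\HH(1)=0$ and $\HH(e^{i\pi n\aa})=-\sgn(n)\, e^{i\pi n\aa}$ for $n\ne 0$. I would establish this by residue calculus, integrating $e^{i\pi n w}\cot(\tfrac{\pi}{2}(\aa-w))$ around a rectangular contour $[-1,1]\times[0,R]$ when $n>0$ or $[-1,1]\times[-R,0]$ when $n<0$, chosen so that $|e^{i\pi n w}|=e^{-\pi n\Im w}$ decays toward the far horizontal side; the two vertical sides cancel by the common period $2$ of both $\cot(\pi w/2)$ and $e^{i\pi n w}$, and the far horizontal side vanishes in the limit by the exponential decay, leaving only the residue at the sole pole on the contour.

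For part (a), forward direction, I would expand a periodic holomorphic $G$ on $P^-$ as $G(x+iy)=\sum_n a_n(y)\,e^{i\pi n x}$; the Cauchy--Riemann equations force $a_n(y)=a_n(0)\,e^{-\pi n y}$, and the hypothesis $G\to c_0$ as $y\to-\infty$ then forces $a_n=0$ for every $n>0$ and pins $a_0\equiv c_0$. Applying the mode formula to $g=c_0+\sum_{n<0}a_n(0)\, e^{i\pi n\aa}$ immediately gives $\HH g = g-c_0$, i.e.\ $(I-\HH)g=c_0$, while the zeroth Fourier coefficient yields $c_0=\avg_I g$. The converse is the same argument run in reverse: expanding $g=\sum_n b_n e^{i\pi n\aa}$ and applying the mode formula to the hypothesis $(I-\HH)g=c_0$ forces $b_n=0$ for $n>0$ and $b_0=c_0$, after which $G(z):=c_0+\sum_{n<0}b_n e^{i\pi n z}$ provides the required holomorphic extension. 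The $L^p$-convergence of Fourier series and the $L^p$-boundedness of $\HH$ for $1<p<\infty$ make these mode-by-mode manipulations rigorous.

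Part (b) then takes two lines: the mode formula yields $\HH^2=I-P_0$, where $P_0 f := \avg_I f$, so that $(I-\HH)\mathbb{P}_H f = \tfrac12(I-\HH^2)f=\tfrac12\avg_I f$, and applying part (a) with $c_0=\tfrac12\avg_I f$ produces the stated $\mathcal F$. The one genuine obstacle throughout is bookkeeping: the $\tfrac{1}{2i}$ normalization and the orientation of $\aa-\bb$ in the definition of $\HH$, the direction of approach in Plemelj--Sokhotski, and the orientation of each contour must all be tracked consistently, since a single sign slip swaps $P^-$ with $P^+$ and reverses the whole mode formula. Once the mode computation is pinned down the remaining steps are routine.
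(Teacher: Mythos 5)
Your proof is correct, and it takes a genuinely different route from the one the paper indicates. The paper disposes of Proposition~\ref{prop:hilbe} in one line, pointing to the Cauchy integral formula and citing a classical reference; the argument they have in mind constructs the holomorphic extension directly by integrating $g$ against the Cauchy kernel for the strip $P^-$ and identifies the boundary jump via Plemelj. You instead diagonalize $\HH$ on the Fourier basis $\{e^{i\pi n\aa}\}_{n\in\Z}$, obtaining $\HH(e^{i\pi n\aa})=-\sgn(n)\,e^{i\pi n\aa}$ and $\HH(1)=0$, and then read off everything mode by mode. I checked the sign conventions: with the paper's normalization $\HH f(\aa)=\tfrac{1}{2i}\pv\int\cot(\tfrac{\pi}{2}(\aa-\bb))f(\bb)\,d\bb$, a change of variables and the distributional identity $\cot(\pi u/2)=\tfrac{1}{i}\sum_{k\ge 1}(e^{ik\pi u}-e^{-ik\pi u})$ give exactly your mode formula, so the $\tfrac{1}{2i}$ and the orientation cooperate. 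The remaining steps — forcing $a_n(0)=0$ for $n>0$ from boundedness as $y\to-\infty$, reading off $c_0=\avg g=a_0$, the converse extension, and $(I-\HH)\P_H f=\tfrac12(I-\HH^2)f=\tfrac12\avg f$ via $\HH^2=I-P_0$ — are all correct.

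What each approach buys: the Fourier/mode argument is self-contained and elementary for this periodic strip, makes the projection identities like $\HH^2=I-P_0$ and $\P_A\P_H=\tfrac14\avg$ completely transparent, and avoids the bookkeeping of Plemelj one-sided limits. The Cauchy integral approach is less tied to the Fourier basis and so transfers more readily to curved domains or non-periodic settings (which is relevant elsewhere in the paper, where one sometimes works with the Hilbert transform $\H$ adapted to $\Sigma(t)$). One small point you gloss over but correctly flag as routine: in the converse direction you must check that $G(\cdot+iy)\to g$ in $L^p$ as $y\to 0^-$, not merely formally; this is Abel summation together with $L^p$-boundedness of the conjugate function, which is exactly the "$L^p$-convergence of Fourier series" and "$L^p$-boundedness of $\HH$" hypotheses you invoke, and it is the reason the statement carries the assumption $p>1$.
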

Proposition~\ref{prop:hilbe} is a classical result, which can be proved by the Cauchy integral formula; see \cite{Journe}. As a consequence, $ \bar Z_t$ satisfies 
\begin{equation}
\label{eq:1026}
(I-\mathbb H)\bar Z_t=0,
\end{equation}
and by \eqref{riemannmean}, $\frac1{Z_{,\aa}}$ satisfies
\begin{equation}
\label{eq:1026a}
(I-\mathbb H) \frac1{Z_{,\aa}}=1.
\end{equation}

We define the following projection operators:
\begin{equation}
  \label{eq:1581}
   \P_H f := \f{(I + \HH)}{2} f; \quad \P_A f := \f{(I-\HH)}{2} f.
\end{equation}
We will refer to  $\P_H$ as the ``holomorphic projection'' and $\P_A$ as the ``antiholomorphic projection''.  These operators are, indeed, proper projections when interpreted modulo a constant.
 
 We now seek formulas for $\AA$, $\AAt$ and $\frak b$.  For derivations here 
 we will rely on the technicalities given in appendix \S \ref{sec:hmb}. 

\subsubsection{\texorpdfstring{$\AA$}{A}  and the quantity  \texorpdfstring{$A_1$}{A1} }\label{sec:27a}
We first derive a formula for $\AA$.   Historically, in \cite{wu1997}, this derivation showed that the strong Taylor stability criterion would automatically hold. There is now \cite{wu1999} a direct proof of this via basic elliptic theory without using the Riemann mapping. For our purposes, though, this original derivation here will be crucial because it introduces a quantity, $A_1$, that compares the degeneracy of $-\f{\partial P}{\partial \vec{n}}$ directly with that of the Riemann mapping and therefore the geometry of the free surface.

We begin with \eqref{eq:179}. The key observation is that $Z_{,\aa}$ is periodic holomorphic, and 
$\bar{Z}_{tt} - i$ is in some sense close to periodic holomorphic, since $\bar{Z}_{t}$ is periodic holomorphic, therefore $Z_{,\aa}(\bar{Z}_{tt} - i)$ is close to periodic holomorphic; and by \eqref{eq:179}, $Z_{,\aa}(\bar{Z}_{tt} - i)$ is purely imaginary.  Applying $(I-\mathbb H)$ to this, using Proposition~\ref{prop:hilbe} and taking imaginary parts, we shall get a formula for $\AA$. 

Multiplying both sides of \eqref{eq:179} by  $Z_{,\aa}$, we get
\begin{equation}
  \label{eq:180}
  Z_{,\aa}(\bar{Z}_{tt} - i) =- i \AA \abs{Z_{,\aa}}^2.
\end{equation}
We now expand out $\bar{Z}_{tt}$. Let
\begin{equation}
  \label{eq:376}
  F(z(\a,t),t) := \bar{z}_t(\a,t),
\end{equation}
where $F= \bar{\vec{v}}$, a periodic holomorphic function in $\Omega(t)$.  We will use this expansion several times in the sequel, always with this definition of $F$. By the chain rule,
\begin{equation}
  \label{eq:181}
  \bar{z}_{tt} = \f{d}{dt} F(z(\a,t),t) = (F_z \circ z) z_t + (F_t \circ z).
\end{equation}
Recall from \S \ref{sec:10} that $\partial_z = D_\a$ for  holomorphic functions. Therefore, $  F_z \circ z = \f{\bar{z}_{t\a}}{z_\a}$, 
and thus $
  \bar{z}_{tt} = \f{\bar{z}_{t\a}}{z_\a} z_t + F_t\circ z$. 
We precompose with $h\i$:
\begin{equation}
  \label{eq:184}
  \bar{Z}_{tt} = \paren{\f{\bar{Z}_{t,\aa}}{Z_{,\aa}}} Z_t + F_t \circ Z.
\end{equation}
We can now write our equation \eqref{eq:180} as
\begin{equation}
  \label{eq:185}
  \bar{Z}_{t,\aa} Z_t + Z_{,\aa} (F_t \circ Z) - i Z_{,\aa} = -i \AA \abs{Z_{,\aa}}^2.
\end{equation}
We apply $(I-\HH)$ to both sides. By \eqref{eq:474a}, \eqref{eq:186a} and \eqref{eq:516}, writing $(I-\HH)( Z_t \bar{Z}_{t,\aa}) = [Z_t,\HH] \bar{Z}_{t,\aa}$, we get 
\begin{equation}
  \label{eq:174}
  [Z_t,\HH] \bar{Z}_{t,\aa} -i = (I-\HH) \paren{- i \AA \abs{Z_{,\aa}}^2}.
\end{equation}
We now take imaginary parts of both sides. This gives us the new quantity $A_1$:
\begin{equation}
  \label{eq:208}
  A_1 : = \AA \abs{Z_{,\aa}}^2 
  = \Im\paren{- [Z_t,\HH] \bar{Z}_{t,\aa}} + 1.
\end{equation}
This is the same $A_1$ as that in \cite{wu1997}. It's easy to see that $\Im \paren{-[Z_t,\HH] \bar{Z}_{t,\aa}}$ is non-negative, by integration by parts. Indeed, if $Z_t = X_t + iY_t$, then
\begin{equation}
  \label{eq:132}
  \begin{aligned}
   \Im \paren{-[Z_t,\HH] \bar{Z}_{t,\aa}} & = - \Im \f{1}{\hdenomconst} \int (Z_t(\aa) - Z_t(\bb)) \cot(\f{\pi}{2}(\aa -\bb)) \bar{Z}_{t,\bb} d\bb
 \\ & =  \f{1}{\hdenomconstreal} \int \f{1}{2} \braces{- \partial_\bb \bracket{(X_t(\aa) - X_t(\bb))^2 + (Y_t(\aa) - Y_t(\bb))^2}} \cot(\f{\pi}{2}(\aa - \bb)) d\bb
\\ & = \f{\pi}{ 8} \int  \f{(X_t(\aa) - X_t(\bb))^2 + (Y_t(\aa) - Y_t(\bb))^2}{\sin^2(\f{\pi}{2}(\aa -\bb))} d\bb
\\&  \ge 0.
\end{aligned}
\end{equation}
Here, there is no boundary term in the integration by parts because $\bound{\bar Z_t }=0$. 
Therefore
\begin{equation}
  \label{eq:394}
  A_1 \ge 1.
\end{equation}
Combine \eqref{eq:180} and \eqref{eq:208} we get
\begin{equation}
  \label{eq:251}
  \f{1}{Z_{,\aa}} = i \f{\bar{Z}_{tt} - i}{A_1}.
\end{equation}

\subsubsection{Degenerate Taylor stability criterion and the singularity of the free surface}\label{sec:27}

We can draw a few important conclusions from the derivations in \S \ref{sec:27a}. For the sake of exposition, we will in this section focus on the angle $\ang$ at the wall, and we will move the corner from $\pm 1$ to $0$; angled crests and other singularities in the middle of the surface can be handled similarly.  

Let $\ang$ be the angle at the corner.   By Christoffel-Schwarz Theorem, the Riemann mapping $\Phi(z)\approx z^r$ at the corner, where $r \ang = \f{\pi}{2}$. By \eqref{eq:63}, ${Z_{,\aa}} = (\Phi^{-1})_{z'}$. Therefore, 
\begin{equation}\label{eq;1010}
Z_{,\aa}=\partial_\aa\Phi^{-1}\approx (\aa)^{1/r-1}
\end{equation}
 at the corner.
 
We observe from \eqref{eq:394}-\eqref{eq:251} that  
 if the acceleration $|Z_{tt}|<\infty$, then $Z_{,\aa}\ne0$.  This implies $r\ge 1$ and $\ang\le \pi/2$. Similarly, this implies that an angled crest have interior angle $\le \pi$.

In this paper we work in the regime where the acceleration $|Z_{tt}|<\infty$.
 
Now, because
\begin{equation}
  \label{eq:209}
  A_1 \circ h = \f{\af \abs{z_\a}^2}{h_\a}
\end{equation}
and
\begin{equation}
  \label{eq:50}
 - \f{\partial P}{\partial \vec{n}} = \abs{z_\a}\af = \f{A_1 \circ h}{\abs{Z_{,\aa} \circ h}},
\end{equation}
$-\f{ \partial P}{\partial \vec{n}}\ge 0$ always holds.
In the regime where the free surface is $C^{1,\gamma}$ and makes a right angle at the corner ($\ang=\pi/2$), 
$0<c_0\le\abs{ (\Phi^{-1})_{z'}}\le C_0<\infty$.   This together with the estimate $A_1 \ge 1$  gave \cite{wu1997} a strictly positive lower bound for the Taylor coefficient $-\f{\partial P}{\partial \vec{n}}$.  In our situation,   $\f{1}{Z_{,\aa}} \to 0$ at the corner if $\ang<\frac \pi 2$; similarly, $\f{1}{Z_{,\aa}} \to  0$ at an angled crest if the interior angle is $<\pi$. If $A_1$ is in addition bounded from above --- which will be true when our energy is finite --- we know that the degeneracy of $-\f{\partial P}{\partial \vec{n}}$ corresponds precisely to the degeneracy of $\f{1}{Z_{,\aa}}$. 

We note that our spatial derivative $D_\aa = \f{1}{Z_{,\aa}} \partial_\aa$ is less singular, in a sense, than $\partial_\aa$. We know $\f{1}{Z_{,\aa}} \to 0$ at singularities, which indicates that the weight function $\f{1}{Z_{,\aa}}$ has some ``regularizing'' effect. Indeed, we have $D_\aa \bar Z_t, \ D_\aa \bar Z_{tt} \in L^\infty$, but $\partial_\aa \bar Z_t,\ \partial_\aa \bar Z_{tt}$ are only in $L^2$ in our energy space, for example; see \S \ref{sec:quants}.

\subsubsection{The quantities \texorpdfstring{$\AAt$}{At} and \texorpdfstring{$\f{\af_t}{\af}\circ h^{-1}$}{at/a}
}\label{sec:28}
Now we seek a formula for the quantity on the RHS of \eqref{eq:178}, $\AAt$. As in \S \ref{sec:27a}, we 
start by multiplying both sides of \eqref{eq:178} by $Z_{,\aa}$ to get a purely imaginary and almost holomorphic quantity, then apply
 $(I-\HH)$ to both sides and take imaginary parts to get a formula for $\AAt$. We have
\begin{equation}
  \label{eq:194}
  Z_{,\aa}(\bar{Z}_{ttt} + i \AA \bar{Z}_{t,\aa}) = - i \AAt \abs{Z_{,\aa}}^2.
\end{equation}
We once again carefully expand the LHS. As before, let $F(z(\a,t),t) = \bar{z}_t(\a,t)$. Again, we have
\begin{equation}
  \label{eq:380}
  \bar{z}_{tt} = (F_z\circ z) z_t + F_t\circ z,
\end{equation}
so
\begin{equation}
  \label{eq:195}
  \bar{z}_{ttt}  = (F_{zz} \circ z) z_t^2 + 2 (F_{tz} \circ z) z_t + (F_z\circ z) z_{tt} + F_{tt}\circ z.
\end{equation}
We now solve for $F_z \circ z,$ $F_{zz} \circ z$ and $F_{tz} \circ z$. Since $\partial_z = D_\a$ on  holomorphic functions,
\begin{equation}
  \label{eq:199}
  F_z \circ z= D_\a \bar{z}_t,\quad F_{zz} \circ z= D_\a^2 \bar{z}_t.
\end{equation}
We solve for $F_{tz} \circ z$ by applying $\partial_z = D_\a$ to \eqref{eq:380}:
\begin{equation}
  \label{eq:198}
  F_{tz} \circ z = D_\a \paren{\bar{z}_{tt} - (D_\a \bar{z}_t) z_t}.
\end{equation}
Therefore, by substituting \eqref{eq:199} and \eqref{eq:198} into \eqref{eq:195}, we get
\begin{equation}
  \label{eq:200}
  \bar{z}_{ttt} = (D_\a^2 \bar{z}_t) z_t^2 + 2 z_t D_\a \paren{\bar{z}_{tt} - (D_\a \bar{z}_t) z_t} + (D_\a \bar{z}_t) z_{tt} + F_{tt} \circ z.
\end{equation}
Precomposing with $h\i$, we have
\begin{equation}
  \label{eq:202}
  \bar{Z}_{ttt} = (D_\aa^2 \bar{Z}_t) Z_t^2 + 2 Z_t D_\aa(\bar{Z}_{tt} - (D_\aa \bar{Z}_t) Z_t) + (D_\aa \bar{Z}_t) Z_{tt} + F_{tt} \circ Z.
\end{equation}
We now go back to \eqref{eq:194}, substituting in \eqref{eq:202} to get
\begin{equation}
  \label{eq:203}
  Z_{,\aa} \paren{(D_\aa^2 \bar{Z}_t) Z_t^2 + 2 Z_t D_\aa(\bar{Z}_{tt} - (D_\aa \bar{Z}_t) Z_t) + (D_\aa \bar{Z}_t) Z_{tt} + F_{tt} \circ Z + i \AA \bar{Z}_{t,\aa}} = -i \AAt \abs{Z_{,\aa}}^2.
\end{equation}
We simplify, distributing the $Z_{,\aa}$ and then using the identity $Z_{tt} + i = i \AA Z_{,\aa}$ on the last term:
\begin{equation}
  \label{eq:204}
  (\partial_\aa D_\aa \bar{Z}_t) Z_t^2 + 2 Z_t \partial_\aa(\bar{Z}_{tt} - (D_\aa \bar{Z}_t)Z_t) + 2\bar{Z}_{t,\aa} Z_{tt} + Z_{,\aa} (F_{tt} \circ Z) + i \bar{Z}_{t,\aa} = - i \AAt \abs{Z_{,\aa}}^2.
\end{equation}
We now apply $(I-\HH)$ to both sides.  Various terms will disappear on the LHS and others will turn into commutators, due to holomorphicity; specifically, we use \eqref{eq:474a}, \eqref{eq:222a}, and \eqref{eq:504a}. We get
\begin{equation}
  \label{eq:205}
  \begin{split}
      [Z_t^2,\HH]\partial_\aa D_\aa \bar{Z}_t + 2[Z_t,\HH] \partial_\aa (\bar{Z}_{tt} - (D_\aa \bar{Z}_t) Z_t) + 2[Z_{tt},\HH] \bar{Z}_{t,\aa} 
\\ = (I-\HH) \braces{-i \AAt \abs{Z_{,\aa}}^2}.
\end{split}
\end{equation}
We could continue working with this equation, but two integrations by parts will give us a nicer equation to work with.  We take the first term and the second part of the second term and integrate by parts both terms, noting that we have no boundary terms. We get
\begin{equation}
  \label{eq:265}
  \bracket{Z_t^2,\HH} \partial_\aa D_\aa \bar{Z}_t - 2[Z_t,\HH]\partial_\aa\paren{(D_\aa \bar{Z}_t) Z_t} = - \f{\pi}{4i} \int \f{(Z_t(\aa) - Z_t(\bb))^2}{\sin^2(\f{\pi}{2}(\aa - \bb))} D_\bb \bar{Z}_t(\bb) d\bb.
\end{equation}
This is a type of higher-order Calderon commutator, which we write as $-[Z_t,Z_t; D_\aa \bar{Z}_t]$ (see \eqref{eq:624}). We therefore can rewrite \eqref{eq:205} as
\begin{equation}
  \label{eq:266}
  - i (I-\HH)  \braces{\AAt \abs{Z_{,\aa}}^2} =   2[Z_t,\HH] \bar{Z}_{tt,\aa} + 2[Z_{tt},\HH] \bar{Z}_{t,\a} - [Z_t, Z_t; D_\aa \bar{Z}_t].
\end{equation}
Taking imaginary parts, we get
\begin{equation}
  \label{eq:207}
   \AAt \abs{Z_{,\aa}}^2 = - \Im \paren{2[Z_t,\HH] \bar{Z}_{tt,\aa} + 2[Z_{tt},\HH] \bar{Z}_{t,\a} - [Z_t, Z_t; D_\aa \bar{Z}_t]}.
\end{equation}
Observe that dividing \eqref{eq:207} by \eqref{eq:208} we have
\begin{equation}
  \label{eq:210}
  \f{\af_t}{\af} \circ h\i = 
  \f{\AAt}{\AA} = \f{\AAt \abs{Z_{,\aa}}^2}{\AA \abs{Z_{,\aa}}^2} = \f{- \Im \paren{2[Z_t,\HH] \bar{Z}_{tt,\aa} + 2[Z_{tt},\HH] \bar{Z}_{t,\aa} - [Z_t, Z_t; D_\aa \bar{Z}_t]}}{A_1}.
\end{equation}

\subsubsection{The quantity  $\frak b:=h_t\circ h^{-1}$}\label{hta-ha-formula}
Here we derive a formula for $\frak b:=h_t\circ h\i$, following \cite{wu1997}. We recall \eqref{eq:175}:  $ h(\a,t) = \Phi(z(\a,t),t) = \Phi \circ z$. Therefore
$ h_\a = (\Phi_z \circ z) z_\a$, 
and
\begin{equation}
  \label{eq:244}
  h_t = (\Phi_t \circ z)+ (\Phi_z \circ z) z_t = (\Phi_t \circ z) + \f{h_\a}{z_\a} z_t.
\end{equation}
We precompose with $h^{-1}$: 
\begin{equation}
  \label{eq:246}
  (h_t \circ h\i) (\aa,t) = \Phi_t \circ Z + \f{1}{Z_{,\aa}} Z_t.
\end{equation}
Apply $(I-\mathbb H)$ to both sides, then take the real parts. By \eqref{eq:312a}, we get 
\begin{equation}
  \label{eq:246a}
 \frak b:= (h_t \circ h\i) (\aa,t) =  \Re (I-\mathbb H) (\f{1}{Z_{,\aa}} Z_t).
\end{equation}

In what follows we will also use the following evolution equation for $\frac1{z_\a}$, or equivalently in the Riemann mapping variable, $\frac1{Z_{,\aa}}$.  We have

\begin{equation}\label{eq:2024}
\partial_t\frac1{z_\a}=-\frac1{z_\a}D_\a z_t, \qquad\text{and}
\end{equation}
\begin{equation}\label{eq:2024a}
\paren{\partial_t\frac {h_\a}{z_\a}}\circ {h}^{-1}=(\partial_t+\frak b\partial_\aa)\paren{\frac1{Z_{,\aa}}}=\frac1{Z_{,\aa}}(\partial_\aa\frak b-D_\aa Z_t).
\end{equation}

\section{The main result}\label{sec:energy}

Observe that the system of free surface equations \eqref{eq:179}-\eqref{eq:178}-\eqref{eq:208}-\eqref{eq:210}-\eqref{eq:246a}-\eqref{eq:1026}-\eqref{eq:1026a}-\eqref{eq:2024a} is a closed system for the quantities $\bar Z_t$, $\bar Z_{tt}$ and $\frac1{Z_{,\aa}}$. In what follows we will focus on this system and construct an energy that allows for $\frac1{Z_{,\aa}}=0$, i.e. singularities on the interface and at the corner, and prove an a priori estimate.

\subsection{Definition of the energy}\label{sec:35}

  We consider a general equation of the form
\begin{equation}
  \label{eq:171}
  (\partial_t^2 + i \mathfrak{a} \partial_\a) \th = G_\th
\end{equation}
with the constraint that $\theta$ is the boundary value of a periodic holomorphic function on $\Omega(t)$. 
Precompose with $h^{-1}$ and use \eqref{eq:209}, we obtain the equation in the Riemann mapping variable
\begin{equation}
  \label{eq:171h}
  \paren{\partial_t^2\th} \circ h^{-1} + i \frac{A_1}{|Z_{,\aa}|^2} \partial_\aa\paren{ \th\circ h^{-1}} = G_\th\circ h^{-1}.
\end{equation}
There are two mutually related basic energies. One is 
\begin{equation}
  \label{eq:328h}
  E_{a,\th}(t) := \int_I \f{\abs{\th_t\circ h^{-1}}^2} {A_1 } d\aa +  \int_I i \paren{\partial_\aa \paren{\f{1}{Z_{,\aa}} (\th \circ h\i)}}\paren{\f{1}{\bar{Z}_{,\aa}} (\bar{\th} \circ h\i)} d\aa +  \int_I \f{\abs{\th\circ h^{-1}}^2}{A_1} d\aa,
\end{equation}
 for \eqref{eq:171h} in the Riemann mapping variables, another is 
\begin{equation}
  \label{eq:172}
  E_{b,\th}(t) := \int_I \f{1}{\mathfrak{a}} \abs{\th_t}^2 d\a + \int_I (i\partial_\a \th) \bar{\th} d\a + \int_I \f{(A_1 \circ h)}{\mathfrak{a}} \abs{\th}^2 d\a
\end{equation}
for \eqref{eq:171}. A form of $E_{b,\th}(t)$ has appeared in \cite{wu2009}. Upon changing to the Riemann mapping coordinates, and by \eqref{eq:209}, we have
\begin{equation}
  \label{eq:172h}
  E_{b,\th}(t) = \int_I \f{\abs{Z_{,\aa}\th_t\circ h^{-1}}^2}{A_1}  d\aa + \int_I i(\partial_\aa \th\circ h^{-1}) (\bar{\th\circ h^{-1}}) d\aa + \int_I \abs{Z_{,\aa}\th\circ h^{-1}}^2 d\aa.
\end{equation}
In $E_{a,\th}$ and $E_{b,\th}$, the first two terms are from the LHS of the equations,  the last terms are of lower order. Because $\th\circ h^{-1}$ and $\f{1}{Z_{,\aa}}$ are holomorphic,  the second terms in $E_{a,\th}(t) $ and $E_{b,\th}(t) $ are nonnegative and equal to $\|\f{1}{Z_{,\aa}} (\th \circ h\i)\|_{\dot H^{1/2}}^2$ and 
 $\| \th \circ h\i\|_{\dot H^{1/2}}^2$ respetively, see \S \ref{sec:half}.

Notice that the primary difference between the two basic energies $E_{a,\th}$ and $E_{b,\th}$ is either to multiply or to divide by the weight function $\f{1}{Z_{,\aa}} $. In the classical case where $|Z_{,\aa}|$ is bounded away from $0$ and $\infty$, there is no real difference between the two energies. However it does make a difference if we want to allow $\f{1}{Z_{,\aa}} \to 0$. 

We now construct our energy functional, by applying the two basic energies to  our equations. We begin with equation 
 \eqref{eq:11} (equivalently \eqref{eq:178}, by a change of the coordinates.):
\begin{equation}
  \label{eq:362}
  (\partial_t^2 + i \mathfrak{a} \partial_\a) \bar{z}_t = -i \af_t \bar{z}_{\a}.
\end{equation}
Apply weighted derivatives $D_\a^k$ to \eqref{eq:362} we get
$$(\partial_t^2 + i \mathfrak{a} \partial_\a) D_\a^k\bar{z}_t =G_{D_\a^k \bar{z}_t}$$
 with 
\begin{equation}
  \label{eq:30}
 G_{D_\a^k \bar{z}_t} =D_\a^k(-i\mathfrak{a}_t \bar{z}_\a) + [\partial_t^2 + i \mathfrak{a}\partial_\a,D_\a^k] \bar{z}_t.  
\end{equation}
Our total energy consists primarily of $E_{a, D_\a^2\bar{z}_t}$ and $E_{b,D_\a\bar{z}_t}$. In addition, we will include one other term in our total energy:  $\abs{\bar{z}_{tt}(\a_0,t) - i}$ for some fixed $\a_0 \in I$. Our total energy therefore is 
\begin{equation}
  \label{eq:49}
  E=E(t) := E_{a,D_\a^2\bar{z}_t}(t) + E_{b,D_\a\bar{z}_t} (t) + \abs{\bar{z}_{tt}(\a_0,t)-i}.
\end{equation}

We henceforth abbreviate notation and write  $E_a := E_{a,D_\a^2 \bar{z}_t}$ and 
$E_b := E_{b,D_\a\bar{z}_t}$. We know
\begin{equation}
  \label{eq:400}
  E_a:= E_{a,D_\a^2 \bar{z}_t} = \nm{(\partial_t D_\a^2 \bar{z}_t) \circ h\i}_{L^2(1/A_1)}^2 +  \nm{\f{1}{Z_{,\aa}}D_\aa^2 \bar{Z}_t}_{\dot{H}^{1/2}}^2 + \nm{D_\aa^2 \bar{Z}_t}_{L^2(1/A_1)}^2.
\end{equation}
and 
\begin{equation}
  \label{eq:408}
  E_b:= E_{b,D_\a \bar{z}_t} = \nm{\partial_t D_\a \bar{z}_t}_{L^2(\f{1}{\af})}^2 +  \nm{D_\aa \bar{Z}_t}_{\dot{H}^{1/2}}^2 + \nm{\bar{Z}_{t,\aa}}_{L^2}^2.
 \end{equation}
As we will see in \S \ref{sec:quants}, the first term  in $E_b$, $  \nm{\partial_t D_\a \bar{z}_t}_{L^2(\f{1}{\af})} \approx \nm{\bar{Z}_{tt,\aa}}_{L^2}$; this together with the last term $\abs{\bar{z}_{tt}(\a_0,t)-i}$ in the energy $E$  gives us control of $\nm{\bar{Z}_{tt}(t)-i}_{L^\infty}$. The inclusion of the last term in $E_b$ gives us an upper bound for $A_1$, see \eqref{eq:361}. 

After developing all necessary tools, in \S \ref{sec:36e} we will show that our energy $E$ is equivalent to the following 
\begin{equation}\label{energy1}
\begin{aligned}
\mathcal E(t)=\|\bar Z_{t,\alpha'}\|_{L^2}^2&+ \|D_{\alpha'}^2\bar Z_t\|_{L^2}^2+\|\partial_{\alpha'}\frac1{Z_{,\alpha'}}\|_{L^2}^2+\|D_{\alpha'}^2\frac1{Z_{,\alpha'}}\|_{L^2}^2\\&+\|\frac1{Z_{,\alpha'} }D_{\alpha'}^2\bar Z_t  \|_{\dot H^{1/2}}^2+\| D_{\alpha'}\bar Z_t  \|_{\dot H^{1/2}}^2+\|\frac1{Z_{,\alpha'}}\|_{L^\infty}^2;
\end{aligned}
\end{equation}
that is, there are universal polynomials $P_1$ and $P_2$, such that 
$E(t)\le P_1(\mathcal E(t))$, and $ \mathcal E(t)\le P_2(E(t))$. 
One may now get a  glimpse of this fact by \eqref{eq:251}. Notice that there is no control of $\nm{Z_{,\aa}}_{L^\infty}$ by  our energy $E$ or $\mathcal E$. So in the regime where $E<\infty$, we do allow $\f{1}{Z_{,\aa}} \to 0$.
We will discuss what types of singularities are allowed by a finite energy $E$ in \S \ref{sec:13a}.
In particular we will show that our energy class $ E(t)<\infty$ allows for non-right angles at the corner with angle $<\frac\pi 4$ as well as 
angled crest type interfaces with interior angles at the singularities $<\frac\pi 2$, which coincides with the angles of the self-similar solutions constructed in \cite{wu:self}. The Stokes extreme waves have interior angle $=\frac{2\pi}3$ at the singularity, hence it is not in our energy class $ E(t)<\infty$. 

The lack of control of $\nm{Z_{,\aa}}_{L^\infty}$ in $E$, or equivalently the lack of a positive lower bound for $-\f{\partial P}{\partial \bf n}$ means that we need to circumvent in the proof of our a priori estimate, Theorem~\ref{maintheorem}.


\subsection{The main result}\label{sec:38}
We now state our main result. 

\begin{theorem}\label{maintheorem}
There exists a polynomial $p=p(x)$ with universal coefficients such that for any solution of water wave equations \eqref{eq:179}-\eqref{eq:178}-\eqref{eq:208}-\eqref{eq:210}-\eqref{eq:246a}-\eqref{eq:1026}-\eqref{eq:1026a}-\eqref{eq:2024a} with  $(Z_t, Z_{tt})\in C^l([0, T], H^{k-l}(S^1)\times H^{k-l}(S^1))$ for $l=0,1$ and $k\ge 4$,
\begin{equation}
  \label{eq:690}
  \f{d}{dt} E(t) \le  p\paren{E(t)}
\end{equation}
for all $t \in [0,T]$.
\end{theorem}
Observe that the quantities $\AA$, $\AA_t$ and $h_t\circ h\i$ in equations \eqref{eq:179}-\eqref{eq:178} are given by formulas \eqref{eq:208}, \eqref{eq:210} and \eqref{eq:246a}. By \eqref{eq:228}, \eqref{eq:305} and \eqref{eq:251}, the assumption in Theorem~\ref{maintheorem} implies, for $l=0,1,\ k\ge 4$,
\begin{equation}\label{assumption2}
\paren{\f{1}{Z_{,\aa}}, A_1, h_t\circ h^{-1}, \frac{\AA_t}{\AA}}\in C^l([0, T], H^{k-l}(S^1)\times H^{k-l}(S^1)\times H^{k-l}(S^1)\times H^{k-l}(S^1)).
\end{equation}

{\bf Remark}: 1. Observe that in Theorem~\ref{maintheorem} no regularity assumptions are made on $Z$. This is because by substituting \eqref{eq:251}, \eqref{eq:208}, \eqref{eq:210} and \eqref{eq:246a} into \eqref{eq:178},  we see that the quasilinear equation \eqref{eq:178} is an equation of the velocity $Z_t$ and acceleration $Z_{tt}$, the quantity $Z$ itself doesn't appear explicitly. The assumption of Theorem~\ref{maintheorem} is consistent with this fact. 

2. It appears that there is an $\infty\cdot 0$ ambiguity in the definition of $E_b$ if we allow $Z_\aa\to \infty$.  This can be resolved by expanding out  \eqref{eq:408},
\begin{equation}\label{eb}
E_b=\int_I\paren{\frac{ \abs{  \bar Z_{tt,\aa}-Z_{t,\aa}D_\aa\bar Z_t}^2}{A_1}+ i \partial_\aa (D_\aa \bar Z_t) \overline{D_\aa \bar Z_t}+ |\bar Z_{t,\aa}|^2}\,d\aa,
\end{equation}
and use the RHS of \eqref{eb} as the definition for $E_b$ and directly take derivative to $t$ to the RHS of \eqref{eb}. We opt for the current version \eqref{eq:408} for the clarity of the origin and the more intuitive proof associate with this definition. The same remark applies to \S\ref{sec:quants}, \S\ref{sec:E2}, \S\ref{controlE1} and \S\ref{sec:36e}, where the $\frac{\infty}{\infty}$ ambiguity can be resolved by using some different algebraic identities.

3. The existence of solutions to the Cauchy problem in the class where $E(t)<\infty$ is obtained by mollifying the initial data and taking the limit of the sequence of approximating solutions. In the proof for the existence theorem we will apply Theorem~\ref{maintheorem} only to the approximating sequence which satisfy in addition that $Z_{,\aa}\in L^\infty$, see \cite{wu2015}. What's important is that the inequality \eqref{eq:690} doesn't depend on any $\|Z_{,\aa}\|_{L^\infty}$ bound.

\subsection{The proof}\label{sec:39}

In the proof we will switch freely between Lagrangian and Riemann mapping variables: we will use Lagrangian coordinates when we need to take a time derivative, but use Riemann mapping variables when we need to estimate terms, since that gives us access to the easily invertible $(I-\HH)$ operator. We choose the initial parametrization of the interface such that $h(\alpha, 0)-\alpha\in C^1(S^1)$.\footnote{Notice that the a priori estimate \eqref{eq:690} is independent of the initial parametrization.} By basic ODE theorey and \eqref{assumption2}: $h_t\circ h\i\in C^1([0, T], H^2(S^1))$, we know $h(\alpha, t)-\alpha\in C^1([0, T], C^1(S^1))$.

We start with writing the energy $E_{a,\th}$ in Lagrangian coordinates. We use \eqref{eq:209} to calculate
\begin{equation}
  \label{eq:331}
  \begin{aligned}
  \int_I i \paren{\partial_\aa \paren{\f{1}{Z_{,\aa}} (\th \circ h\i)}}&\paren{\f{1}{\bar{Z}_{,\aa}} (\bar{\th} \circ h\i)} d\aa
  = \Re \int_I i  \paren{\partial_\a \paren{\th \f{h_\a}{z_\a}}} \paren{\bar{\th} \f{h_\a}{\bar{z}_\a}} d\a
\\ & =  \Re \int_I i \f{ h_\a^2}{ \abs{z_\a}^2} (\partial_\a \th) \bar{\th} d\a + \Re \int_I \paren{i \f{1}{\bar{z}_\a}\partial_\a \f{h_\a}{z_\a}} \abs{\th}^2 h_\a d\a 
\\&= \Re \int_I (i \af \partial_\a \th) \bar{\th} \f{h_\a}{A_1 \circ h} d\a + \Re \int_I \paren{i \f{1}{\bar{z}_\a}\partial_\a \f{h_\a}{z_\a}} \abs{\th}^2 h_\a d\a,
\end{aligned}
\end{equation}
so
\begin{equation}
  \label{eq:328}
  E_{a,\th}(t) = \int_I \abs{\th_t}^2 \f{h_\a}{A_1 \circ h} d\a + \Re \int_I (i \af \partial_\a \th) \bar{\th} \f{h_\a}{A_1 \circ h} d\a + \Re \int_I \paren{i \f{1}{\bar{z}_\a}\partial_\a \f{h_\a}{z_\a}} \abs{\th}^2 h_\a d\a +  \int_I \abs{\th}^2 \f{h_\a}{A_1 \circ h} d\a.
\end{equation}

We prove \eqref{eq:690} by differentiating each component of $E(t)$ in time. For the two main energies, $E_a$ and $E_b$, we then integrate by parts to arrive at a term $\partial_t^2\theta+i\af \partial_\alpha \theta$ and use the basic equation $\partial_t^2\theta+i\af \partial_\a \theta=G_\theta$ to replace it with $G_\theta$.  What remain to be estimated will be $G_\theta$, along with several ancillary terms.  We control those quantities  in  \S \ref{sec:43} through \S \ref{controlE1} in terms of a polynomial of the energy.

\subsubsection{The estimate for \texorpdfstring{$E_a$}{Ea}}\label{sec:40a}
We begin by differentiating $E_a$ with respect to $t$.

We will work initially with general $\th$ satisfying $\bound{\th} = 0$, $(I-\HH) (\th \circ h\i) = 0$, and the basic equation \eqref{eq:171}, and then we will specialize to the $\th = D_\a^2 \bar{z}_t$ in our energies.  The periodicity ensures that there is no boundary term in the integration by parts.

We differentiate \eqref{eq:328} with respect to $t$ and use the fact that $\f{\af h_\a}{(A_1 \circ h)} = \f{h_\a^2}{\abs{z_\a}^2}$ (by \eqref{eq:209} or equivalently the definition for $\AA$ and $A_1$) in the following calculation.
\begin{equation}
  \label{eq:385}
  \begin{aligned}
  \f{d}{dt} E_{a,\th}(t) & = \int (\th_{tt} \bar{\th}_t + \th_t \bar{\th}_{tt}) \f{h_\a}{A_1 \circ h} d\a  + \int \abs{\th_t}^2 \f{h_{t\a}}{A_1 \circ h} d\a - \int \abs{\th_t}^2 \f{h_\a}{A_1 \circ h} \f{(A_1 \circ h)_t}{A_1 \circ h} d\a
\\ & + \Re \int i \paren{\f{h_\a^2}{\abs{z_\a}^2}}_t  \th_\a \bar{\th} d\a +\underbrace{\Re \int i \f{h_\a^2}{\abs{z_\a}^2} \th_{t\a} \bar{\th} d\a}_{-\Re \int i \paren{\f{h_\a^2}{\abs{z_\a}^2}}_\a \th_t \bar{\th} d\a+ \Re \int \f{h_\a^2}{\abs{z_\a}^2} \th_t \bar{i\th_\a}d\a} + \Re \int i \f{h_\a^2}{\abs{z_\a}^2} \th_\a \bar{\th}_t  d\a
\\ & + \Re \int i \paren{\f{1}{\bar{z}_\a} \partial_\a \f{h_\a}{z_\a}}_t \abs{\th}^2 h_\a d\a + \Re \int i \paren{\f{1}{\bar{z}_\a} \partial_\a \f{h_\a}{z_\a}} (\th_t \bar{\th} + \th \bar{\th}_t) h_\a d\a + \Re \int i \paren{\f{1}{\bar{z}_\a} \partial_\a \f{h_\a}{z_\a}} \abs{\th}^2 h_{t\a} d\a
\\ & +  \int (\th_t \bar{\th} + \bar{\th}_t \th) \f{h_\a}{A_1 \circ h} +  \int \abs{\th}^2 \f{h_{t\a}}{A_1 \circ h} d\a -  \int \abs{\th}^2 \f{h_\a}{A_1 \circ h} \f{(A_1 \circ h)_t}{A_1 \circ h} d\a
\\ &= \int 2 \Re \paren{(\th_{tt} + i\af \th_\a)\bar{\th}_t} \f{h_\a}{A_1 \circ h} d\a
 + \int \paren{\abs{\th_t}^2+\abs{\th}^2}   \paren{\f{h_{t\a}}{h_\a} - \f{(A_1 \circ h)_t}{A_1 \circ h}} \f{h_\a}{A_1 \circ h} d\a 
\\ & + \Re \int i \paren{\f{1}{\bar{z}_\a} \partial_\a \f{h_\a}{z_\a}}_t \abs{\th}^2 h_\a d\a
 + \Re \int i \paren{\f{1}{\bar{z}_\a} \partial_\a \f{h_\a}{z_\a}} (\th_t \bar{\th} + \th \bar{\th}_t) h_\a d\a 
 \\&+ \Re \int i \paren{\f{1}{\bar{z}_\a} \partial_\a \f{h_\a}{z_\a}} \abs{\th}^2 \f{h_{t\a}}{h_\a} h_\a d\a
+   2 \Re \int \th_t \bar{\th} \f{h_\a}{A_1 \circ h} d\a 
\\& - \Re \int i \paren{\f{h_\a^2}{\abs{z_\a}^2}}_\a \th_t \bar{\th} d\a
 + \Re  \int i \paren{\f{h_\a^2}{\abs{z_\a}^2}}_t  \th_\a \bar{\th} d\a.
\end{aligned}
\end{equation}

Now we show how we control each of these terms.

For the first, we replace $\th_{tt} + i \af \th_\a$ with the RHS $G_\th$ by the main equation \eqref{eq:171} and then use Cauchy-Schwarz inequality:
\begin{equation}
  \label{eq:43}
  \int 2 \Re \paren{(\th_{tt} + i\af \th_\a)\bar{\th}_t} \f{h_\a}{A_1 \circ h} d\a  
\\ \lec  \paren{\int  \abs{G_\th}^2\f{h_\a}{A_1 \circ h}d\a}^{1/2} \paren{\int \abs{\bar{\th}_t}^2 \f{h_\a}{A_1 \circ h}d\a}^{1/2}.
\end{equation}
The first factor,  which involves the RHS of the basic equation, 
is the main term to control. For $\th= D_\a^2 \bar{z}_t$, 
\begin{equation}
  \label{eq:391}
  G_\th = D_\a^2(-i\af_t \bar{z}_\a) + [\partial_t^2 +i\af \partial_\a,D_\a^2] \bar{z}_t.
\end{equation}
We estimate these terms in \S \ref{controlE1}.

In \S \ref{sec:98a}, we will control
\begin{equation}
  \label{eq:189}
  \abs{\Re \int i \paren{\f{1}{\bar{z}_\a} \partial_\a \f{h_\a}{z_\a}}_t \abs{\th}^2 h_\a d\a} \lec \eqref{eq:368}.
\end{equation}
Because $\f{1}{{z}_\a} \partial_\a \f{h_\a}{z_\a}= D_\a \f{h_\a}{z_\a}=\paren{D_\aa \f{1}{Z_{,\aa}}}\circ h$, we estimate
\begin{equation}
  \label{eq:190}
    \abs{\Re \int i \paren{\f{1}{\bar{z}_\a} \partial_\a \f{h_\a}{z_\a}} (\th_t \bar{\th} + \th \bar{\th}_t) h_\a d\a}     \lec \nm{D_\aa \f{1}{Z_{,\aa}}}_{L^\infty} \nm{A_1}_{L^\infty} E_{a,\th}.
 \end{equation}
Similarly, we estimate
\begin{equation}
  \label{eq:191}
    \abs{\Re \int i \paren{\f{1}{\bar{z}_\a} \partial_\a \f{h_\a}{z_\a}} \abs{\th}^2 \f{h_{t\a}}{h_\a} h_\a d\a} 
    \lec \nm{D_\aa \f{1}{Z_{,\aa}}}_{L^\infty} \nm{\f{h_{t\a}}{h_\a}}_{L^\infty} \nm{A_1}_{L^\infty} E_{a,\th}.
\end{equation}
We observe that
\begin{equation}
  \label{eq:388}
  \nm{\f{\paren{\f{h_\a^2}{\abs{z_\a}^2}}_\a}{h_\a}}_{L^\infty} = \nm{\partial_\aa \f{1}{\abs{Z_{,\aa}}^2}}_{L^\infty} \le 2 \nm{D_\aa \f{1}{Z_{,\aa}}}_{L^\infty}, 
\end{equation}
so, using Cauchy-Schwarz inequality, we have
\begin{equation}
  \label{eq:389}
  \abs{- \Re \int i \paren{\f{h_\a^2}{\abs{z_\a}^2}}_\a \th_t\bar{\th}d\a} \lec \nm{A_1}_{L^\infty}  \nm{D_\aa \f{1}{Z_{,\aa}}}_{L^\infty} E_{a,\th}.
\end{equation}
In \S \ref{sec:48} we control 
\begin{equation}
  \label{eq:193}
  \Re  \int i \paren{\f{h_\a^2}{\abs{z_\a}^2}}_t  \th_\a \bar{\th} d\a  \lec \eqref{eq:552}.
\end{equation}
We estimate  the remaining two terms of \eqref{eq:385} by Cauchy-Schwarz and H\"older's inequalities. 

We now combine these estimates and specialize to $\th = D_\a^2 \bar{z}_t$. Each of the remaining factors we will control in \S \ref{sec:43}; we list the location of the final estimate for each quantity of the following in the subscripts. We get
\begin{equation}
  \label{eq:66}
  \begin{aligned}
    \abs{\f{d}{dt} E_{a}} & \lec \underbrace{\nm{G_{D_\a^2 \bar{z}_t}}_{L^2(\f{h_\a}{A_1 \circ h})}}_{\lec \eqref{eq:1600}} E_a^{1/2} 
     + \underbrace{\nm{\f{h_{t\a}}{h_\a}}_{L^\infty}}_{\lec \eqref{eq:451}}E_a + \underbrace{\nm{\f{(A_1 \circ h)_t}{A_1 \circ h}}_{L^\infty}}_{\lec \eqref{eq:455}}E_a 
\\ & + \underbrace{\paren{1 + \nm{\f{h_{t\a}}{h_\a}}_{L^\infty}}}_{\lec 1+ \eqref{eq:451}} \nm{A_1}_{L^\infty} \underbrace{\nm{D_\aa \f{1}{Z_{,\aa}}}_{L^\infty}}_{\lec \eqref{eq:1023}} E_a
 + E_a
\\ & + \underbrace{\Re \int i \paren{\f{1}{\bar{z}_\a} \partial_\a \f{h_\a}{z_\a}}_t \abs{\th}^2 h_\a d\a}_{\lec \eqref{eq:368}}
 + \underbrace{\Re  \int i \paren{\f{h_\a^2}{\abs{z_\a}^2}}_t  \th_\a \bar{\th} d\a}_{\lec \eqref{eq:552}}.
  \end{aligned}
\end{equation}

\subsubsection{The estimate for \texorpdfstring{$E_b$}{Eb}}\label{sec:40}
Now we consider our second term $E_b$. Once again, we work first with general $\th$ satisfying $\bound{\th} = 0$, $(I-\HH)(\th \circ h\i) = 0$, and the main equation \eqref{eq:171}.  Then we specialize to $\th = D_\a \bar{z}_t$.  The periodicity ensures there is no boundary term when we integrate by parts. We differentiate \eqref{eq:172} with respect to $t$:
\begin{equation}
  \label{eq:224}
  \begin{aligned}
    \f{d}{dt} E_{b,\th}(t) & = \int \f{1}{\mathfrak{a}} (\th_{tt} \bar{\th}_t + \th_t \bar{\th}_{tt}) d\a - \int \f{\mathfrak{a}_t}{\mathfrak{a}} \f{1}{\mathfrak{a}} \abs{\th_t}^2 d\a 
+ \underbrace{\int i \th_{t\a} \bar{\th} d\a}_{= {\int \bar{i \th_\a} \th_t d\a}} + \int i \th_\a \bar{\th}_t d\a
\\ & + \int \f{(A_1 \circ h)}{\mathfrak{a}} (\th_t \bar{\th} + \th \bar{\th}_t) d\a + \int \f{(A_1 \circ h)_t}{\af} \abs{\th}^2 d\a - \int \f{\mathfrak{a}_t}{\mathfrak{a}} \f{(A_1 \circ h)}{\mathfrak{a}} \abs{\th}^2 d\a
\\ & 
= 2 \Re \int \f{G_\th}{\mathfrak{a}} \bar{\th}_t d\a  - \int \f{\mathfrak{a}_t}{\mathfrak{a}} \f{1}{\mathfrak{a}} \abs{\th_t}^2 d\a 
\\ & + \int \f{A_1\circ h}{\mathfrak{a}} (\th_t \bar{\th} + \th \bar{\th}_t) d\a + \int \paren{\f{(A_1 \circ h)_t}{(A_1 \circ h)} - \f{\mathfrak{a}_t}{\mathfrak{a}}} \f{(A_1 \circ h)}{\mathfrak{a}} \abs{\th}^2 d\a.
\end{aligned}
\end{equation}

By H\"older and Cauchy-Schwarz inequalities, we conclude that
\begin{equation}
  \label{eq:173}
  \abs{\f{d}{dt} E_{b,\th}(t)} \lec  \nm{\f{G_\th}{\sqrt{\mathfrak{a}}}}_{L^2} E_{b,\th}^{1/2}  + \paren{\|A_1\|_{L^\infty}^{1/2} + \nm{\f{\mathfrak{a}_t}{\mathfrak{a}}}_{L^\infty} + \nm{\f{(A_1 \circ h)_t}{(A_1 \circ h)}}_{L^\infty}} E_{b,\th}.
\end{equation}
For $\th = D_\a \bar{z}_t$, we control $\nm{\f{G_\th}{\sqrt{\mathfrak{a}}}}_{L^2}$ in \S \ref{sec:E2}, at  \eqref{eq:311}. We control $\|A_1\|_{L^\infty}$ at \eqref{eq:361},  $\nm{\f{\af_t}{\af}}_{L^\infty}$ at \eqref{eq:424} and  $\nm{\f{(A_1 \circ h)_t}{(A_1 \circ h)}}_{L^\infty}$ at \eqref{eq:455}.

\subsubsection{The estimate for \texorpdfstring{$\abs{z_{tt}(\a_0,t)-i}$}{| ztt(a0,t) - i |}}
Finally, we show that we can control $\frac d{dt}\abs{\bar{z}_{tt}(\a_0,t)-i}$. By differentiating with respect to $t$, we have, by the basic equations \eqref{eq:178}-\eqref{eq:179},
\begin{equation}
  \label{eq:664}
  \begin{aligned}
    \f{d}{dt}\abs{\bar{z}_{tt}(\a_0) - i} & \le \abs{\bar{z}_{ttt}(\a_0)}
     = \abs{\frac{\AA_t}{\AA}\circ h(\a_0)+\bar {D_\a z_t}(\a_0)}\abs{\bar{z}_{tt}(\a_0) - i}
    \\ & \lec \paren{\nm{\f{\AA_t}{\AA}}_{L^\infty} + \nm{D_\a \bar{z}_t}_{L^\infty}} \abs{\bar{z}_{tt}(\a_0) - i}.
  \end{aligned}
\end{equation}
We control $\nm{\f{\AA_t}{\AA}}_{L^\infty}$ below at \eqref{eq:424} and $\nm{D_\a \bar{z}_t}_{L^\infty}$ at \eqref{eq:15}.

\subsection{Outline of the remainder of the proof}\label{sec:42}
In  sections \S \ref{sec:43} through \S \ref{controlE1}, we complete the proof of the a priori inequality \eqref{eq:690}.

In \S \ref{sec:43}, we control various quantities that are necessary for our proof. In \S \ref{sec:quants}, we carefully list the basic quantities controlled by our energy. In \S \ref{sec:at-a}-\S \ref{sec:46}, we estimate various other quantities that are listed above in \S \ref{sec:39}. 
In appendix \S \ref{quantities}, we list and give references to all the quantities controlled in \S \ref{sec:43}, which we then use, sometimes without citation, in \S \ref{sec:98a} through \S \ref{controlE1}.

In \S \ref{sec:98a} and \S \ref{sec:48} we estimate the terms from \eqref{eq:189} and \eqref{eq:193} in the estimate of $\f{d}{dt}E_a$ above. Finally, in \S \ref{sec:E2} and \S \ref{controlE1} we conclude the estimates for $\f{d}{dt}E_b$ and $\f{d}{dt}E_a$, respectively, by controlling the $G_\th$ terms, completing the proof of Theorem~\ref{maintheorem}.

The basic approach for many of the estimates is to try and use the fact that certain quantities are purely real-valued and others are holomorphic to express the terms in question as commutators involving the Hilbert transform, and then use the commutator estimates from \S \ref{sec:31} to avoid loss of derivatives. 
Because our estimates are very tight, we have to take care in using different estimates for different terms, including treating certain terms as commutators while keeping others in $(I-\HH)$ form. 
Very often we  have to carefully expand the quantities, and then decompose the factors and regroup the terms to make sure no further cancellations are possible and desired estimates can be obtained.  We will give enough details to facilitate reading. 

We use $C(E)$ to indicate a universal polynomial of $E$,  which may differ from line to line.

Throughout the remaining derivations, we will repeatedly rely on the identity \eqref{eq:209}.

\section{Quantities controlled by our energy}\label{sec:43}
Here we collect together many quantities that are controlled by our energy. 

\subsection{Basic quantities controlled by the energy}\label{sec:quants}

In this section, we present a list of basic quantities controlled by our energy. Because conjugations and commutations of $\partial_t$ with $D_\a$ add complexity, we take care to list some of those estimates as well. We list all of the basic terms controlled here at \eqref{eq:1050} below.

We start with \eqref{eq:400} and \eqref{eq:408}. $E_a$ and $E_b$ directly control
  \begin{align}
   \label{eq:498}
\nm{(\partial_t+\frak b\partial_\aa)D_\aa^2\bar Z_t}_{L^2},  \nm{D_\a^2 \bar{z}_t}_{L^2(\f{h_\a}{A_1 \circ h} d\a)},\quad\nm{D_\aa^2 \bar{Z}_t}_{L^2(\f{1}{A_1} d\aa)}, &\quad  \nm{\f{1}{Z_{,\aa}} D_\aa^2 \bar{Z}_t}_{\dot{H}^{1/2}}  \le  E_a^{1/2},\\
 \label{eq:239}
   \paren{ \int \abs{D_\a \bar{z}_t}^2 \f{d\a}{\af}}^{1/2}\le \nm{\bar{Z}_{t,\aa}}_{L^2} &\le E_b^{1/2},
\end{align}
where we used $A_1\ge 1$ \eqref{eq:394} in \eqref{eq:239}. By \eqref{eq:208}, using  the commutator estimate \eqref{eq:32}, and then \eqref{eq:239},  we have
\begin{equation}
  \label{eq:361}
    \nm{A_1}_{L^\infty} 
    \lec \nm{Z_{t,\aa}}_{L^2}^2 + 1
     \lec E_b + 1.
\end{equation}
Thanks  to \eqref{eq:361}, we can now control $\nm{D_\a^2 \bar{z}_t}_{L^2(h_\a d\a)}=\nm{D_\aa^2 \bar{Z}_t}_{L^2}$ by a polynomial of our energy $E$.

Now we control $\nm{D_\a \bar{z}_t}_{L^\infty} = \nm{D_\aa \bar{Z}_t}_{L^\infty}$. We work in Riemann mapping variables and use the weighted Sobolev inequality \eqref{eq:638} with weight $\omega =\f{1}{\abs{Z_{,\aa}}^2}$ (and $\e = 1$). Note that $\avg (D_\aa \bar{Z}_t)^2 = 0$ by \eqref{eq:48}. This gives
\begin{equation}
  \label{eq:206}
  \begin{aligned}
 \nm{D_\aa \bar{Z}_t}_{L^\infty} & \lec \paren{\int \abs{D_\aa \bar{Z}_t}^2 \abs{Z_{,\aa}}^2 d\aa}^{1/2} + \paren{\int \abs{\partial_\aa D_\aa \bar{Z}_t}^2 \f{1}{\abs{Z_{,\aa}}^2} d\aa}^{1/2}
 \\ & = \nm{\bar{Z}_{t,\aa}}_{L^2} + \nm{D_\aa^2 \bar{Z}_t}_{L^2}
 \le E_b^{1/2}+ \nm{A_1 }_{L^\infty}^{1/2} E_a^{1/2}
\\ & \lec C(E).
\end{aligned}
\end{equation}
 We conclude that
\begin{equation}
  \label{eq:15}
  \nm{D_\a z_t}_{L^\infty} = \nm{D_\aa Z_t}_{L^\infty} = \nm{D_\a \bar{z}_t}_{L^\infty} = \nm{D_\aa \bar{Z}_t}_{L^\infty}  \lec C(E).
\end{equation}

Now we use the commutator identity \eqref{eq:120} to move the $\partial_t$ inside the first term in $E_b$:
\begin{equation}
  \label{eq:403}
\begin{aligned}
  \paren{\int \abs{D_\a \bar{z}_{tt}}^2 \f{d\a}{\af}}^{1/2} & \le \paren{\int \abs{\partial_t D_\a \bar{z}_t}^2 \f{d\a}{\af}}^{1/2} + \paren{\int \abs{[\partial_t,D_\a] \bar{z}_t}^2 \f{d\a}{\af}}^{1/2}
\\ & \le E_b^{1/2} + \nm{D_\a z_t}_{L^\infty} \paren{ \int \abs{D_\a \bar{z}_t}^2 \f{d\a}{\af}}^{1/2}
\\ & \lec C(E)
\end{aligned}
\end{equation}
by \eqref{eq:15} and \eqref{eq:239}.  By changing variables and by \eqref{eq:209}, we conclude from \eqref{eq:403} and \eqref{eq:361} that
\begin{equation}
  \label{eq:406}
\begin{aligned}
   \nm{\bar{Z}_{tt,\aa}}_{L^2} & \lec \nm{A_1}_{L^\infty}^{1/2}  \paren{\int \abs{D_\a \bar{z}_{tt}}^2 \f{d\a}{\af}}^{1/2} 
\\ & \lec C(E).
\end{aligned}
\end{equation}

Note that  $\abs{D_\a^2 f}\ne \abs{D_\a^2 \bar{f}}$. Nevertheless, for generic $f$, we can control $D_\a^2 f$ by $D_\a^2 \bar{f}$ in $L^2({h_\a} d\a)$ norm, at the expense of some lower-order terms.    For notational convenience, we define here
\begin{equation}
  \label{eq:401}
  \abs{D_\a} := \f{1}{\abs{z_\a}} \partial_\a = \f{z_\a}{\abs{z_\a}} D_\a.
  \end{equation}
  We expand:
\begin{equation}
  \label{eq:65aa}
  \begin{aligned}
   D_\a^2 f & = \paren{\f{\abs{z_\a}}{z_\a}}^2 \abs{D_\a}^2 f + \f{\abs{z_\a}}{z_\a} \paren{\abs{D_\a} \f{\abs{z_\a}}{z_\a}} \abs{D_\a} f\\
   D_\a^2 \bar f & = \paren{\f{\abs{z_\a}}{z_\a}}^2 \abs{D_\a}^2 \bar f + \f{\abs{z_\a}}{z_\a} \paren{\abs{D_\a} \f{\abs{z_\a}}{z_\a}} \abs{D_\a} \bar f.
   \end{aligned}
   \end{equation}
Therefore,
   \begin{equation}\label{eq:65a}
   \abs{D_\a^2 f }\le 
\abs{D_\a^2 \bar{f}} + 2 \abs{\abs{D_\a} \f{\abs{z_\a}}{z_\a}} \abs{D_\a \bar{f}}
\end{equation}
and so
\begin{equation}
  \label{eq:283}
    \paren{\int \abs{D_\a^2 f}^2 {h_\a d\a}}^{1/2}    \le  \paren{\int \abs{D_\a^2 \bar{f}}^2 {h_\a d\a}}^{1/2} + 2 \nm{D_\a \bar{f}}_{L^\infty} \paren{\int \abs{D_\a \f{\abs{z_\a}}{z_\a}}^2 {h_\a d\a}}^{1/2}.
\end{equation}
By \eqref{eq:279} and then \eqref{eq:100}, \eqref{eq:8}, \eqref{eq:209}, and the fact $A_1 \ge 1$,
\begin{equation}
  \label{eq:284}
    \abs{D_\a \f{\abs{z_\a}}{z_\a}}^2 h_\a   =  \abs{D_\a \f{\bar{z}_{tt} - i}{\abs{\bar{z}_{tt} -i}}}^2 h_\a 
 \le  \abs{D_\a \bar{z}_{tt}}^2 \f{h_\a}{\abs{\bar{z}_{tt} - i}^2 } 
  =  \abs{D_\a \bar{z}_{tt}}^2 \f{h_\a}{\af^2 \abs{z_\a}^2 } 
\le  \abs{D_\a \bar{z}_{tt}}^2 \f{1}{\af }.
\end{equation}
Plugging this into \eqref{eq:283}, and using \eqref{eq:403}, we get
\begin{equation}
  \label{eq:404}
  \paren{\int \abs{D_\a^2 f}^2 h_\a d\a}^{1/2}
\lec  \paren{\int \abs{D_\a^2 \bar{f}}^2 h_\a d\a}^{1/2}
  + \nm{D_\a \bar{f}}_{L^\infty} C(E).
\end{equation}

We now apply \eqref{eq:404} to $f = z_t$, using \eqref{eq:15} to control $\nm{D_\a \bar{z}_t}_{L^\infty}$ and \eqref{eq:498} and \eqref{eq:361} to control $\nm{D_\aa^2 \bar Z_t}_{L^2}$:
\begin{equation}
  \label{eq:405}
 \nm{D_\aa^2 Z_t}_{L^2}  = \paren{\int \abs{D_\a^2 z_t}^2 h_\a d\a}^{1/2} 
 \lec C(E).
\end{equation}

We now control $\nm{D_\a^2 \bar{z}_{tt}}_{L^2(h_\a d\a)}$. We use the commutator identity \eqref{eq:121} to get
\begin{equation}
  \label{eq:122}
  \begin{aligned}
    \nm{D_\aa^2 \bar{Z}_{tt}}_{L^2} & = \nm{D_\a^2 \bar{z}_{tt}}_{L^2({h_\a d\a})}  
\\ & \le \nm{\partial_t D_\a^2 \bar{z}_{t}}_{L^2({h_\a d\a})} + 2 \nm{(D_\a z_t)D_\a^2 \bar{z}_t}_{L^2({h_\a d\a})} + \nm{(D_\a^2 z_t) D_\a \bar{z}_t}_{L^2({h_\a d\a})}
\\ & \le\nm{A_1}^{1/2}_{L^\infty} E_a^{1/2} + 2\nm{D_\a z_t}_{L^\infty} \nm{D_\a^2 \bar{z}_t}_{L^2({h_\a d\a})} + \nm{D_\a^2 z_t}_{L^2({h_\a d\a})} \nm{D_\a \bar{z}_t}_{L^\infty}
\\ & \lec C(E).
\end{aligned}
\end{equation}
We will also need to control $D_\aa^2 Z_{tt}$; we delay doing this until later, after we control $\nm{D_\a \bar{z}_{tt}}_{L^\infty}$.

We will also at one point need to control
\begin{equation}
  \label{eq:419}
  \begin{aligned}
      \nm{D_\a \partial_t D_\a \bar{z}_t}_{L^2({h_\a}d\a)} & \le \nm{\partial_t D_\a D_\a \bar{z}_t}_{L^2({h_\a}d\a)} + \nm{[\partial_t,D_\a] D_\a \bar{z}_{t}}_{L^2({h_\a}d\a)}
      \\ & \le \|A_1\|_{L^\infty}^{1/2}E_a^{1/2} + \nm{D_\a z_t}_{L^\infty} \nm{D_\a^2 \bar{z}_{t}}_{L^2({h_\a}d\a)}
      \\ & \lec C(E).
\end{aligned}
\end{equation}

We now control $\nm{\bar{z}_{tt} - i}_{L^\infty} = \nm{\bar{Z}_{tt} - i}_{L^\infty}$. Recall from our definition of the energy \eqref{eq:49} that the energy includes $\abs{\bar{z}_{tt}(\a_0,t)-i}$ for some fixed $\a_0 \in I$. Let $\aao = h(\a_0, t)$. Then, by the fundamental theorem of calculus, for arbitrary $\aa \in I$,
\begin{equation}
  \label{eq:29}
  \begin{aligned}
    \abs{\bar{Z}_{tt}(\aa,t)- i}     & \lec \abs{\bar{Z}_{tt}(\aao,t)- i} + \nm{\bar{Z}_{tt,\aa}}_{L^1(I)}
    \\ & \lec \abs{\bar{Z}_{tt}(\aao,t)- i} + \eqref{eq:406}.
  \end{aligned}
\end{equation}
We conclude that
\begin{equation}
  \label{eq:378}
  \begin{aligned}
    \nm{z_{tt} + i}_{L^\infty} & = \nm{Z_{tt}+i}_{L^\infty} = \nm{\bar{z}_{tt}-i}_{L^\infty} = \nm{\bar{Z}_{tt}-i}_{L^\infty}
  \\ & \lec C(E).
\end{aligned}
\end{equation}
Because of \eqref{eq:251} and \eqref{eq:394}, we can also conclude that
\begin{equation}
  \label{eq:357}
  \nm{\f{1}{Z_{,\aa}}}_{L^\infty} \lec C(E).
\end{equation}

We use this to control $\nm{D_\a \bar{z}_{tt}}_{L^\infty}$, using the weighted Sobolev inequality \eqref{eq:396} in Riemann mapping variables with weight $\omega = \f{1}{\abs{Z_{,\aa}}^2}$ (and $\e = 1$):\footnote{Note that unlike our proof for $\nm{D_\aa \bar{Z}_t}_{L^\infty}$ at \eqref{eq:206} above, we don't necessarily have that $\avg (D_\aa \bar{Z}_{tt})^2$ is zero, so we get a third term in the Sobolev inequality.}
\begin{equation}
  \label{eq:17}
  \begin{aligned}
     \nm{D_\a z_{tt}}_{L^\infty} & = \nm{D_\aa Z_{tt}}_{L^\infty} = \nm{D_\a \bar{z}_{tt}}_{L^\infty} = \nm{D_\aa \bar{Z}_{tt}}_{L^\infty}
\\ & \lec \nm{\bar{Z}_{tt,\aa}}_{L^2} + \nm{D_\aa^2 \bar{Z}_{tt}}_{L^2} + \paren{\int \abs{D_\aa \bar{Z}_{tt}}^2 d\aa}^{1/2}
\\ & \lec \paren{1 + \nm{1/Z_{,\aa}}_{L^\infty}} \nm{\bar{Z}_{tt,\aa}}_{L^2} + \nm{D_\aa^2 \bar{Z}_{tt}}_{L^2} 
\\ & \lec \paren{1 +\eqref{eq:357}} \eqref{eq:406} +  \eqref{eq:122}\\&\lec C(E).
\end{aligned}
\end{equation}

Finally, we use \eqref{eq:404}, \eqref{eq:122}, and \eqref{eq:17} to control $D_\a^2 z_{tt}$ and $D_\aa^2 Z_{tt}$:
\begin{equation}
  \label{eq:418}
  \begin{aligned}
    \nm{D_\aa^2 Z_{tt}}_{L^2} &=  \nm{D_\a^2 z_{tt}}_{L^2({h_\a d\a})} 
\\ & \lec \nm{D_\a^2 \bar{z}_{tt}}_{L^2(h_\a d\a)}+  \nm{D_\a \bar{z}_{tt}}_{L^\infty}C(E)\\ & \lec \eqref{eq:122} + \eqref{eq:17} C(E)\lec C(E).
  \end{aligned}
\end{equation}

To sum up, we have the following quantities and their counterparts in Lagrangian coordinates controlled by universal polynomials of the energy $E$: 
\begin{equation}\label{eq:1050}
\begin{aligned}
&\nm{D_\aa^2 \bar{Z}_{tt}}_{L^2}, \nm{D_\aa^2 {Z}_{tt}}_{L^2},  \nm{D_\aa^2 \bar{Z}_t}_{L^2}, \nm{D_\aa^2 {Z}_t}_{L^2}, \nm{D_\a \partial_t D_\a \bar{z}_t}_{L^2({h_\a}d\a)},\\& \nm{\f{1}{Z_{,\aa}} D_\aa^2 \bar{Z}_t}_{\dot{H}^{1/2}}, \nm{D_\aa \bar{Z}_{tt}}_{L^\infty}, \nm{D_\aa {Z}_{tt}}_{L^\infty},  \nm{D_\aa \bar{Z}_t}_{L^\infty}, \nm{D_\aa {Z}_t}_{L^\infty},\\&  \nm{ \bar{Z}_{tt, \aa}}_{L^2},   \nm{ \bar{Z}_{t, \aa}}_{L^2},  \int \abs{D_\a \bar{z}_t}^2 \f{d\a}{\af},\  \int \abs{D_\a \bar{z}_{tt}}^2 \f{d\a}{\af},  \ \nm{\f{1}{Z_{,\aa}}}_{L^\infty}, \nm{Z_{tt}+i}_{L^\infty} , \nm{A_1}_{L^\infty} \\&
\lec C(E).
\end{aligned}
\end{equation}

\subsection{Controlling \texorpdfstring{$\nm{\f{\af_t}{\af}}_{L^\infty}$}{|| at/a || L infinity}} \label{sec:at-a}

We now show that we can control $\nm{\f{\af_t}{\af}}_{L^\infty}$, using \eqref{eq:210}.  Because $A_1 \ge 1$ \eqref{eq:394}, it suffices to control
\begin{equation}
  \label{eq:267}
  \nm{2[Z_t,\HH] \bar{Z}_{tt,\aa} + 2[Z_{tt},\HH] \bar{Z}_{t,\aa} - [Z_t, Z_t; D_\aa \bar{Z}_t]}_{L^\infty}.
\end{equation}
We control the first two terms by \eqref{eq:32}, and the last term by H\"older's inequality and then Hardy's inequality \eqref{eq:77}.
We have
\begin{equation}
  \label{eq:424}
  \nm{\f{\af_t}{\af}}_{L^\infty} \lec  \nm{Z_{t,\aa}}_{L^2} \nm{\bar{Z}_{tt,\aa}}_{L^2}+ \nm{Z_{t,\aa}}_{L^2}^2 \nm{D_\aa \bar{Z}_t}_{L^\infty}\lec C(E).
\end{equation}

\subsection{Controlling \texorpdfstring{$\nm{\partial_\aa \f{1}{Z_{,\aa}}}_{L^2}$}{|| partial a' 1/Z,a' || L2}}\label{sec:44}
Recall from \eqref{eq:251} that
 $ \f{1}{Z_{,\aa}} = i \f{\bar{Z}_{tt} - i}{A_1}$.
Therefore,
\begin{equation}
  \label{eq:252}
  \partial_\aa \f{1}{Z_{,\aa}} = i \f{\bar{Z}_{tt,\aa}}{A_1} - i \f{\bar{Z}_{tt} -i }{A_1^2} \partial_\aa A_1.
\end{equation}
Because $A_1 \ge 1$ \eqref{eq:394}, we can control the first term by $\nm{\bar{Z}_{tt,\aa}}_{L^2}$.  Now we address the second term.

We recall that  $ A_1 = \Im\paren{ - [Z_t,\HH] \bar{Z}_{t,\aa}} + 1$ \eqref{eq:208}. Therefore,
\begin{equation}
  \label{eq:254}
  \begin{aligned}
      \partial_\aa A_1 & = \partial_\aa \Im \f{-1}{\hdenomconst} \int (Z_t(\aa) - Z_t(\bb)) \cot(\f{\pi}{2}(\aa -\bb)) \bar{Z}_{t,\bb} d\bb
\\ & = - \Im Z_{t,\aa} \HH \bar{Z}_{t,\aa} + \Im \f{1}{\hdenomconst} \int \f{\pi}{2} \f{(Z_t(\aa) - Z_t(\bb))}{\sin^2(\f{\pi}{2}(\aa - \bb))} \bar{Z}_{t,\bb}(\bb) d\bb 
\\ & =   \Im \f{1}{\hdenomconst} \int \f{\pi}{2} \f{(Z_t(\aa) - Z_t(\bb))}{\sin^2(\f{\pi}{2}(\aa - \bb))} \bar{Z}_{t,\bb}(\bb) d\bb,
\end{aligned}
\end{equation}
where the first term disappears because $\HH \bar{Z}_{t,\aa} = \bar{Z}_{t,\aa}$ \eqref{eq:474} and so $Z_{t,\aa} \HH \bar{Z}_{t,\aa}$ is purely real. Therefore, multiplying \eqref{eq:254} by $\abs{\bar{Z}_{tt}(\a) - i}$ 
 and splitting into two parts, we have
\begin{equation}
  \label{eq:255}
  \begin{aligned}
      \abs{\bar{Z}_{tt} - i} \partial_\aa A_1 & = \Im \f{1}{\hdenomconst} \int \f{\pi}{2} \f{(Z_t(\aa) - Z_t(\bb))}{\sin^2(\f{\pi}{2}(\aa - \bb))} (\abs{\bar{Z}_{tt}(\aa)-i} - \abs{\bar Z_{tt}(\bb)-i}) \bar{Z}_{t,\bb}(\bb) d\bb 
\\&+ \Im \f{1}{\hdenomconst} \int \f{\pi}{2} \f{(Z_t(\aa) - Z_t(\bb))}{\sin^2(\f{\pi}{2}(\aa - \bb))}\abs{\bar{Z}_{tt}(\bb) - i} \bar{Z}_{t,\bb}(\bb) d\bb 
\\ & = I + II.
\end{aligned}
\end{equation}
We need to control $\nm{I}_{L^2}$ and $\nm{II}_{L^2}$. 
By \eqref{eq:113},
\begin{equation}
  \label{eq:257}
  \nm{I}_{L^2} \lec\nm{Z_{t,\aa}}_{L^2}  \nm{\bar{Z}_{tt,\aa}}_{L^2} \nm{\bar{Z}_{t,\aa}}_{L^2} = \nm{\bar{Z}_{t,\aa}}_{L^2}^2 \nm{\bar{Z}_{tt,\aa}}_{L^2}.
\end{equation}
For $II$, we replace  $\abs{\bar Z_{tt}(\bb)-i}$ by $\abs{\f{(-i A_1(\bb))}{Z_{,\bb}}}$ \eqref{eq:251} and use estimate \eqref{eq:258}, noticing that $\f{1}{Z_{,\bb}}\bar{Z}_{t,\bb}=D_\bb \bar{Z}_t$:
\begin{equation}
  \label{eq:427}
  \nm{II}_{L^2} \lec \nm{Z_{t,\aa}}_{L^2} \nm{A_1 D_\aa \bar{Z}_{t}}_{L^\infty} \le \nm{Z_{t,\aa}}_{L^2} \nm{D_\aa \bar{Z}_t}_{L^\infty} \nm{A_1}_{L^\infty}.
\end{equation}

We conclude that
\begin{equation}
  \label{eq:1018}
  \begin{aligned}
  \nm{ \paren{\bar{Z}_{tt} - i} \partial_\aa A_1  }_{L^2} &\lec \nm{\bar{Z}_{tt,\aa}}_{L^2}\nm{Z_{t,\aa}}_{L^2}^2 + \nm{Z_{t,\aa}}_{L^2} \nm{D_\aa \bar{Z}_t}_{L^\infty} \nm{A_1}_{L^\infty}\\& \lec C(E)
  \end{aligned}
\end{equation}
and
\begin{equation}
  \label{eq:425}
  \begin{aligned}
  \nm{\partial_\aa \f{1}{Z_{,\aa}}}_{L^2} &\lec \nm{\bar{Z}_{tt,\aa}}_{L^2}+  \nm{ \paren{\bar{Z}_{tt} - i} \partial_\aa A_1  }_{L^2}
    \\& \lec C(E).
  \end{aligned}
\end{equation}

\subsection{Controlling 
\texorpdfstring{$\nm{\partial_\aa (I-\HH) \f{Z_t}{Z_{,\aa}}}_{L^\infty}$, \texorpdfstring{$\nm{\f{h_{t\a}}{h_\a}}_{L^\infty}$}{|| hta/ha || L infinity}}{|| partial a' (I-H) Zt/Z,a' || L infinity} and related quantities}\label{sec:45c}

Observe that by the assumption of Theorem~\ref{maintheorem}, $\frac{Z_t}{Z_{,\aa}}$, $Z_t$, $\frac1{Z_{,\aa}}$ are in $C^1([0, T], C^2(S^1))$. We begin with computing
\begin{equation}
  \label{eq:127}
  \begin{aligned}
    \partial_\aa (I-\HH) \f{Z_t}{Z_{,\aa}} 
    & =   (I-\HH) D_\aa Z_t + (I-\HH) \braces{Z_t \partial_\aa \f{1}{Z_{,\aa}}}
\\& =2 D_\aa Z_t  - (I+\HH) D_\aa Z_t + (I-\HH) \braces{Z_t \partial_\aa \f{1}{Z_{,\aa}}}. 
\end{aligned}
\end{equation}
Now we use  \eqref{eq:242}, \eqref{eq:474}, and the fact that $\HH$ is purely imaginary to rewrite the second and third terms on the RHS above into commutators, and use \eqref{eq:32} to control 
\begin{equation}
  \label{eq:170}
 \nm{(I+\HH) D_\aa Z_t}_{L^\infty}  = \nm{\bracket{\f{1}{{Z}_{,\aa}},\HH} {Z}_{t,\aa}}_{L^\infty} \lec \nm{\partial_\aa \f{1}{{Z}_{,\aa}}}_{L^2} \nm{{Z}_{t,\aa}}_{L^2},
\end{equation}
\begin{equation}
  \label{eq:151}
  \nm{(I-\HH) \braces{Z_t \partial_\aa \f{1}{Z_{,\aa}}}}_{L^\infty} =  \nm{\bracket{Z_t,\HH} \partial_\aa \f{1}{Z_{,\aa}}}_{L^\infty} \lec \nm{Z_{t,\aa}}_{L^2} \nm{\partial_\aa \f{1}{Z_{,\aa}}}_{L^2}.
\end{equation}
Therefore
\begin{equation}
  \label{eq:513}
  \nm{\partial_\aa (I-\HH) \f{Z_t}{Z_{,\aa}}}_{L^\infty}   \lec \nm{D_\aa Z_t}_{L^\infty} + \nm{\partial_\aa \f{1}{Z_{,\aa}}}_{L^2} \nm{{Z}_{t,\aa}}_{L^2}\lec C(E).
  \end{equation}
  Observe that  
  \begin{equation}
  \label{eq:452}
  \f{h_{t\a}}{h_\a} \circ h\i = \partial_\aa (h_t \circ h\i):=\partial_\aa \frak b,
\end{equation}
so by \eqref{eq:246a} and \eqref{eq:513},
\begin{equation}
  \label{eq:451}
  \nm{\partial_\aa\frak b}_{L^\infty}=\nm{\f{h_{t\a}}{h_\a}}_{L^\infty}\lec C(E).
\end{equation}

\subsection{Controlling \texorpdfstring{$\nm{\f{(A_1 \circ h)_t}{(A_1 \circ h)}}_{L^\infty}$}{|| A1t/A1 || L infinity}}\label{sec:41}
Recall that $ A_1 \circ h = \f{\af \abs{z_\a}^2}{h_\a}$ \eqref{eq:209}.
Therefore,
\begin{equation}
  \label{eq:215}
  \f{\f{d}{dt} (A_1 \circ h)}{A_1 \circ h} = \f{\af_t}{\af} - \f{h_{t\a}}{h_\a} + 2 \Re D_\a z_t.
\end{equation}
We have controlled each of the terms on the RHS in $L^\infty$ in the previous sections. We conclude that
\begin{equation}
  \label{eq:455}
  \begin{aligned}
    \nm{\f{\f{d}{dt}(A_1 \circ h)}{A_1 \circ h}}_{L^\infty} & \lec \nm{\f{\af_t}{\af}}_{L^\infty} + \nm{\f{h_{t\a}}{h_\a}}_{L^\infty} + \nm{D_\a z_t}_{L^\infty}\lec C(E).
\end{aligned}
\end{equation}

\subsection{Controlling \texorpdfstring{$\nm{D_\aa \f{1}{Z_{,\aa}}}_{L^\infty}$}{|| Da' 1/Z,a' || L infinity} and related quantities
} \label{sec:46}
Recall from \eqref{eq:251} that
$ \f{1}{Z_{,\aa}} = i \f{\bar{Z}_{tt} - i}{A_1}$.
Therefore,
\begin{equation}
\label{eq:1015}
D_\aa \f{1}{Z_{,\aa}}= i \f{D_\aa\bar{Z}_{tt} }{A_1}-i\f{\bar{Z}_{tt} - i}{A_1^2}{D_\aa A_1}=i \f{D_\aa\bar{Z}_{tt} }{A_1}+ \f{(\bar{Z}_{tt} - i)^2}{A_1^3}{\partial_\aa A_1}.
\end{equation}
 Because we can control the first term on the RHS by $\nm{D_\aa Z_{tt}}_{L^\infty}$, it suffices to focus on the second term.  
 We start from \eqref{eq:254}, and then use a similar idea to \eqref{eq:255}:
\begin{equation}
  \label{eq:1014}
  \begin{aligned}
      \abs{\bar{Z}_{tt} - i}^2 \partial_\aa A_1 & = \Im \f{1}{\hdenomconst} \int \f{\pi}{2} \f{(Z_t(\aa) - Z_t(\bb))}{\sin^2(\f{\pi}{2}(\aa - \bb))} (\abs{\bar{Z}_{tt}(\aa)-i}^2 - \abs{\bar Z_{tt}(\bb)-i}^2) \bar{Z}_{t,\bb}(\bb) d\bb
\\&+ \Im \f{1}{\hdenomconst} \int \f{\pi}{2} \f{(Z_t(\aa) - Z_t(\bb))}{\sin^2(\f{\pi}{2}(\aa - \bb))}\abs{\bar{Z}_{tt}(\bb) - i}^2 \bar{Z}_{t,\bb}(\bb) d\bb
\\ & = I + II.
\end{aligned}
\end{equation}
To control $\nm{I}_{L^\infty}$, we use the mean value theorem and the periodicity of $Z_{tt}$  to estimate 
\begin{equation}
  \label{eq:7}
  \abs{\f{\abs{\bar{Z}_{tt}(\aa)-i}^2 - \abs{\bar Z_{tt}(\bb)-i}^2} {\sin(\f{\pi}{2}(\aa - \bb))}  }\lec \nm{(\bar{Z}_{tt}-i)\partial_\aa \bar{Z}_{tt} }_{L^\infty}\lec \|A_1\|_{L^\infty} \nm{D_\aa \bar Z_{tt}}_{L^\infty}.
\end{equation}
From Cauchy-Schwarz inequality and Hardy's inequality \eqref{eq:77}, we get
\begin{equation}\label{eq:1016}
\nm{I}_{L^\infty}\lec \nm{Z_{t,\aa}}_{L^2}^2\|A_1\|_{L^\infty} \nm{D_\aa \bar Z_{tt}}_{L^\infty}.
\end{equation}
For $II$, 
observe that $\abs{\bar{Z}_{tt}(\aa) - i}^2 \bar{Z}_{t,\aa}(\aa)=\f{A_1^2}{\bar Z_{,\aa}}D_\aa \bar Z_t$
and $\bound{  \f{A_1^2}{\bar Z_{,\aa}}D_\aa \bar Z_t}=0$. 
We integrate by parts as in \eqref{eq:98}:
\begin{equation}\label{eq:1017}
\begin{split}
 II=\Im\f{1}{\hdenomconst} \int \f{\pi}{2} \f{(Z_t(\aa) - Z_t(\bb))}{\sin^2(\f{\pi}{2}(\aa - \bb))}\abs{\bar{Z}_{tt}(\bb) - i}^2 \bar{Z}_{t,\bb}(\bb) d\bb \\=\Im\mathbb H \paren{Z_{t,\aa} \f{A_1^2}{\bar Z_{,\aa}}D_\aa \bar Z_t}-\Im[Z_t,\mathbb H] \partial_\aa \paren{\f{A_1^2}{\bar Z_{,\aa}}D_\aa \bar Z_t}.
 \end{split}
\end{equation}
We estimate the second term by \eqref{eq:32}:
\begin{equation}\label{eq:1019}
\nm{[Z_t,\mathbb H] \partial_\aa \paren{\f{A_1^2}{\bar Z_{,\aa}}D_\aa \bar Z_t}}_{L^\infty}\lec 
\|Z_{t,\aa}\|_{L^2}\nm{ \partial_\aa \paren{\f{A_1^2}{\bar Z_{,\aa}}D_\aa \bar Z_t}}_{L^2}.
\end{equation}
We expand the first term, using the conjugate of \eqref{eq:474},  noticing that $\Im \paren{Z_{t,\aa} \f{A_1^2}{\bar Z_{,\aa}}D_\aa \bar Z_t}=0$:
\begin{equation}\label{eq:1021}
\Im \mathbb H \paren{Z_{t,\aa} \f{A_1^2}{\bar Z_{,\aa}}D_\aa \bar Z_t}=-\Im \paren{Z_{t,\aa} \f{A_1^2}{\bar Z_{,\aa}}D_\aa \bar Z_t+\bracket{\f{A_1^2}{\bar Z_{,\aa}}D_\aa \bar Z_t,\mathbb H}Z_{t,\aa}}=-\Im\bracket{\f{A_1^2}{\bar Z_{,\aa}}D_\aa \bar Z_t,\mathbb H}Z_{t,\aa}.
\end{equation}
We estimate the RHS by  \eqref{eq:32}:
\begin{equation}\label{eq:1022}
\nm{\bracket{\f{A_1^2}{\bar Z_{,\aa}}D_\aa \bar Z_t,\mathbb H}Z_{t,\aa}}_{L^\infty}\lec \|Z_{t,\aa}\|_{L^2}\nm{ \partial_\aa \paren{\f{A_1^2}{\bar Z_{,\aa}}D_\aa \bar Z_t}}_{L^2}.
\end{equation}
Now,
\begin{equation}\label{eq:1020}
\begin{aligned}
\nm{ \partial_\aa \paren{\f{A_1^2}{\bar Z_{,\aa}}D_\aa \bar Z_t}}_{L^2}
& \lec \nm{A_1}_{L^\infty}^2\nm{D^2_\aa \bar Z_t}_{L^2}\\&+\nm{A_1}_{L^\infty}^2\nm{\partial_\aa\f{1}{Z_{,\aa}}}_{L^2}\nm{D_\aa \bar Z_t}_{L^\infty}+\nm{ \f{A_1}{\bar Z_{,\aa}}\partial_\aa A_1  }_{L^2}\nm{D_\aa \bar Z_t}_{L^\infty}.
\end{aligned}
\end{equation}

Combining the above calculations and using \eqref{eq:1018}, \eqref{eq:425}, the estimates in \S \ref{sec:quants}, and the fact $A_1\ge 1$, we conclude
\begin{equation}\label{eq:1023}
\begin{aligned}
\nm{D_\aa \f{1}{Z_{,\aa}}}_{L^\infty}&\le \nm{D_\aa\bar{Z}_{tt} }_{L^\infty}+\nm{(\bar{Z}_{tt} - i)^2\partial_\aa A_1 }_{L^\infty}\\&\lec \nm{D_\aa\bar{Z}_{tt} }_{L^\infty}+\nm{Z_{t,\aa}}_{L^2}^2\|A_1\|_{L^\infty} \nm{D_\aa \bar Z_{tt}}_{L^\infty}+\|Z_{t,\aa}\|_{L^2}\nm{ \partial_\aa (\f{A_1^2}{\bar Z_{,\aa}}D_\aa \bar Z_t)}_{L^2}\\&\lec C(E).
\end{aligned}
\end{equation}
We  record here the estimate for two related quantities, which we will use in later sections:
\begin{equation}\label{eq:1024}
\nm{(\bar Z_{tt}-i)\partial_\aa \f{1}{Z_{,\aa}}}_{L^\infty}\le \nm{A_1}_{L^\infty}\nm{D_\aa \f{1}{Z_{,\aa}}}_{L^\infty}\lec C(E);
\end{equation}
and
\begin{equation}\label{eq:1024a}
\nm{\partial_\aa\f{ Z_{tt}+i}{Z_{,\aa}}}_{L^\infty}\le \nm{D_\aa Z_{tt}}_{L^\infty}+ \nm{(Z_{tt}+i)\partial_\aa \f{1}{Z_{,\aa}}}_{L^\infty}  \lec C(E).
\end{equation}

\section{Controlling \texorpdfstring{$\Re \int i \paren{\f{1}{\bar{z}_\a} \partial_\a \f{h_\a}{z_\a}}_t \abs{\th}^2 h_\a d\a$}{Re int i 1/za partial a (ha/za)t |theta|2 ha da}}\label{sec:98a}
In this section, we control from \eqref{eq:66} the term
\begin{equation}
  \label{eq:229}
  \begin{aligned}
    \Re \int i \paren{\f{1}{\bar{z}_\a} \partial_\a \f{h_\a}{z_\a}}_t \abs{\th}^2 h_\a d\a & = - \Re \int i \f{\bar{z}_{t\a}}{\bar{z}_\a} \paren{\f{1}{\bar{z}_\a} \partial_\a \f{h_\a}{z_\a}} \abs{\th}^2 h_\a d\a 
\\ & +  \Re \int i \paren{\f{1}{\bar{z}_\a} \partial_\a \partial_t \f{h_\a}{z_\a}} \abs{\th}^2 h_\a d\a
  \end{aligned}
\end{equation}
for $\th = D_\a^2 \bar{z}_t$.
We can control the first of these terms by
\begin{equation}
  \label{eq:169}
  \begin{aligned}
    \abs{-\Re \int i \f{\bar{z}_{t\a}}{\bar{z}_\a} \paren{\f{1}{\bar{z}_\a} \partial_\a \f{h_\a}{z_\a}} \abs{\th}^2 h_\a d\a} & \lec \nm{D_\a \bar{z}_t}_{L^\infty} \nm{D_\aa \f{1}{Z_{,\aa}}}_{L^\infty} \nm{D_\aa^2 \bar{Z}_t}_{L^2}^2.
  \end{aligned}
\end{equation}
Therefore, it suffices to focus on the second term on the RHS of \eqref{eq:229}. We expand it out:
\begin{equation}
  \label{eq:232}
  \begin{aligned}
    \Re \int i &\paren{\f{1}{\bar{z}_\a} \partial_\a \partial_t \f{h_\a}{z_\a}} \abs{\th}^2 h_\a d\a  = \Re \int i \paren{\f{1}{\bar{z}_\a} \partial_\a \paren{\f{h_\a}{z_\a} \paren{ \f{h_{t\a}}{h_\a} - \f{z_{t\a}}{z_\a}}}} \abs{\th}^2 h_\a d\a
\\ & = \Re \int i \paren{\f{1}{\bar{z}_\a} \partial_\a \f{h_\a}{z_\a}} \paren{ \f{h_{t\a}}{h_\a} - \f{z_{t\a}}{z_\a}} \abs{\th}^2 h_\a d\a 
 + \Re \int i \paren{ \f{h_\a}{|z_\a|^2} \partial_\a \paren{ \f{h_{t\a}}{h_\a} - \f{z_{t\a}}{z_\a}}} \abs{\th}^2 h_\a d\a.
  \end{aligned}
\end{equation}
We can estimate the first term on the RHS by
\begin{equation}
  \label{eq:237}
  \abs{\Re \int i \paren{\f{1}{\bar{z}_\a} \partial_\a \f{h_\a}{z_\a}} \paren{ \f{h_{t\a}}{h_\a} - \f{z_{t\a}}{z_\a}} \abs{\th}^2 h_\a d\a} \lec \nm{D_\aa \f{1}{Z_{,\aa}}}_{L^\infty} \paren{\nm{\f{h_{t\a}}{h_\a}}_{L^\infty} + \nm{D_\a z_t}_{L^\infty}} \nm{D_\aa^2 \bar{Z}_t}_{L^2}^2.
\end{equation}
Therefore, it suffices to focus on the second term on the RHS of \eqref{eq:232}. Observe that because $h$ is real-valued, 
\begin{equation}
  \label{eq:238}
  \Re \int i \paren{ \f{h_\a}{|z_\a|^2} \partial_\a \paren{ \f{h_{t\a}}{h_\a} - \f{z_{t\a}}{z_\a}}} \abs{\th}^2 h_\a d\a = - \Re \int i \paren{ \f{h_\a}{|z_\a|^2} \partial_\a \paren{\f{z_{t\a}}{z_\a}}} \abs{\th}^2 h_\a d\a.
\end{equation}
We now drop $\Re$ and the $i$, write $D_\a^2 \bar{z}_t = \th$, and switch to Riemann mapping variables. For consistency with the quantities we've controlled elsewhere, we will take a conjugate. We have
\begin{equation}
  \label{eq:292}
  \int  \paren{\f{1}{\abs{Z_{,\aa}}^2} \partial_\aa \paren{\f{\bar{Z}_{t,\aa}}{\bar{Z}_{,\aa}}}} \abs{D_\aa^2 \bar{Z}_t}^2 d\aa.
\end{equation}

We want to take advantage of the holomorphicity and antiholomorphicity of various of these factors.  To do this, 
we first use the identity
$$\f{1}{\abs{Z_{,\aa}}^2} \partial_\aa \paren{\f{\bar{Z}_{t,\aa}}{\bar{Z}_{,\aa}}}=\f{1}{\bar{Z}_{,\aa}^2} \partial_\aa \paren{\f {\bar{Z}_{t,\aa}}{Z_{,\aa}}}+\bar{D_\aa Z_t}\paren{D_\aa \frac1{\bar{Z}_{,\aa}}-\bar{D_\aa} \frac1{{Z}_{,\aa}}}$$
to replace the first factor 
to make it closer to holomorphic:
\begin{equation}
  \label{eq:294}
  \begin{aligned}
     & \int  \paren{\f{1}{\abs{Z_{,\aa}}^2} \partial_\aa \paren{\f{\bar{Z}_{t,\aa}}{\bar{Z}_{,\aa}}}} \abs{D_\aa^2 \bar{Z}_t}^2 d\aa 
\\ & =\int  \paren{\f{1}{\bar{Z}_{,\aa}^2} \partial_\aa D_\aa \bar{Z}_t} \abs{D_\aa^2 \bar{Z}_t}^2 d\aa 
 +\int    \bar{D_\aa Z_t}\paren{D_\aa \frac1{\bar{Z}_{,\aa}}-\bar{D_\aa} \frac1{{Z}_{,\aa}}}  \abs{D_\aa^2 \bar{Z}_t}^2 d\aa.
\end{aligned}
\end{equation}
 We can estimate the second term by
\begin{equation}
  \label{eq:304}
      \abs{\int   \bar{D_\aa Z_t}\paren{D_\aa \frac1{\bar{Z}_{,\aa}}-\bar{D_\aa} \frac1{{Z}_{,\aa}}}\abs{D_\aa^2 \bar{Z}_t}^2 d\aa} 
 \lec \nm{D_\aa Z_{t}}_{L^\infty} \nm{D_\aa \frac1{Z_{,\aa}}}_{L^\infty} \nm{D_\aa^2 \bar{Z}_t}_{L^2}^2\lec C(E).
\end{equation}
It therefore remains only to control the first term on the RHS of \eqref{eq:294}.  Now we take advantage of holomorphicity. We rewrite this as
\begin{equation}
  \label{eq:306}
     \int  \paren{\f{1}{\bar{Z}_{,\aa}^2} \partial_\aa D_\aa \bar{Z}_t} \abs{D_\aa^2 \bar{Z}_t}^2 d\aa 
      = \int \paren{(\P_A + \P_H) \paren{\f{1}{\bar{Z}_{,\aa}} \partial_\aa D_\aa \bar{Z}_t} }\paren{\bar{\f{1}{Z_{,\aa}} D_\aa^2 \bar{Z}_t}} \P_H D_\aa^2\bar{Z}_t d\aa,
\end{equation}
where we have used \eqref{eq:315} to insert $\P_H$ in front of the $D_\aa^2 \bar{Z}_t$ and decomposed the first factor into the antiholomorphic and holomorphic projections. Now we use the adjoint property \eqref{eq:471} to turn the $\P_H$ into a $\P_A$ on the opposite factors, and control using Cauchy-Schwarz inequality:
\begin{equation}
  \label{eq:308}
  \begin{aligned}
     &\abs{\int \paren{(\P_A + \P_H) \paren{\f{1}{\bar{Z}_{,\aa}} \partial_\aa D_\aa \bar{Z}_t} }\paren{\bar{\f{1}{Z_{,\aa}} D_\aa^2 \bar{Z}_t}}  \P_H D_\aa^2\bar{Z}_t d\aa} 
\\ & \lec \nm{ \P_A\braces{ \paren{(\P_A + \P_H)\paren{ \f{1}{\bar{Z}_{,\aa}} \partial_\aa D_\aa \bar{Z}_t} }\paren{\bar{\f{1}{Z_{,\aa}} D_\aa^2 \bar{Z}_t}}}}_{L^2} \nm{D_\aa^2 \bar{Z}_t}_{L^2}.
  \end{aligned}
\end{equation}
It now remains only to control this first factor.

First we consider the term with the $\P_H$. In this case, we can rewrite this as a commutator:
\begin{equation}
  \label{eq:353}
  \begin{aligned}
   \P_A\braces{ \paren{\P_H \paren{\f{1}{\bar{Z}_{,\aa}} \partial_\aa D_\aa \bar{Z}_t}} \paren{\bar{\f{1}{Z_{,\aa}} D_\aa^2 \bar{Z}_t}}} & = \frac12 \bracket{\bar{\f{1}{Z_{,\aa}} D_\aa^2 \bar{Z}_t},\HH}\P_H \paren{\f{1}{\bar{Z}_{,\aa}} \partial_\aa D_\aa \bar{Z}_t }
\\ & + \f{1}{4} \bar{\f{1}{Z_{,\aa}} D_\aa^2 \bar{Z}_t} \paren{\avg \f{1}{\bar{Z}_{,\aa}} \partial_\aa D_\aa \bar{Z}_t}.
  \end{aligned}
\end{equation}
Here, the mean term appears because of \eqref{eq:159}.  We now use commutator estimate \eqref{eq:96} for the first term and H\"older's inequality for the second term, to conclude that
\begin{equation}
  \label{eq:355}
  \begin{split}
   & \nm{\P_A\braces{ \paren{\P_H \paren{\f{1}{\bar{Z}_{,\aa}} \partial_\aa D_\aa \bar{Z}_t}} \paren{\bar{\f{1}{Z_{,\aa}} D_\aa^2 \bar{Z}_t}}}}_{L^2} \\& \lec  \nm{\bar{\f{1}{Z_{,\aa}} D_\aa^2 \bar{Z}_t}}_{\dot{H}^{1/2}} \nm{\P_H \paren{\f{1}{\bar{Z}_{,\aa}} \partial_\aa D_\aa \bar{Z}_t}}_{L^2} 
 + \nm{\f{1}{Z_{,\aa}} D_\aa^2 \bar{Z}_t}_{L^2} \nm{\f{1}{\bar{Z}_{,\aa}} \partial_\aa D_\aa \bar{Z}_t}_{L^1}
\\ & \lec  \nm{D_\aa^2 \bar{Z}_t}_{L^2} \paren{ \nm{\f{1}{Z_{,\aa}} D_\aa^2 \bar{Z}_t}_{\dot{H}^{1/2}} + \nm{\f{1}{Z_{,\aa}}}_{L^\infty} \nm{D_\aa^2 \bar{Z}_t}_{L^2}}.
\end{split}
\end{equation}

Finally, we consider the $\P_A$ term in the first factor on the RHS of \eqref{eq:308}. By the $L^2$ boundedness of $\P_A$,  
it suffices to control
\begin{equation}
  \label{eq:360}
  \begin{aligned}
    &\nm{\f{1}{Z_{,\aa}} D_\aa^2 \bar{Z}_t(I-\HH)\paren{ \f{1}{\bar{Z}_{,\aa}} \partial_\aa D_\aa \bar{Z}_t}}_{L^2} \\ & \lec \nm{ D_\aa^2 \bar{Z}_t \bracket{\f{1}{Z_{,\aa}},\HH} \f{1}{\bar{Z}_{,\aa}} \partial_\aa D_\aa \bar{Z}_t}_{L^2} 
 + \nm{D_\aa^2 \bar{Z}_t (I-\HH) \f{1}{\bar{Z}_{,\aa}} D_\aa^2 \bar{Z}_t}_{L^2}
\\ & \lec \nm{D_\aa^2 \bar{Z}_t}_{L^2} \nm{ \bracket{\f{1}{Z_{,\aa}},\HH} \f{1}{\bar{Z}_{,\aa}} \partial_\aa D_\aa \bar{Z}_t}_{L^\infty}
 + \nm{D_\aa^2 \bar{Z}_t}_{L^2} \nm{ \bracket{\f{1}{\bar{Z}_{,\aa}},\HH} D_\aa^2 \bar{Z}_t}_{L^\infty}
\\ & \lec \nm{D_\aa^2 \bar{Z}_t}_{L^2}^2 \nm{\partial_\aa \f{1}{Z_{,\aa}}}_{L^2},
\end{aligned}
\end{equation}
where we've used \eqref{eq:315} to get the second commutator and used commutator estimate \eqref{eq:32}.

We now combine our estimates, concluding that
\begin{equation}
  \label{eq:368}
  \begin{aligned}
    \Re \int i &\paren{\f{1}{\bar{z}_\a} \partial_\a \f{h_\a}{z_\a}}_t \abs{\th}^2 h_\a d\a \ \lec \eqref{eq:169} + \eqref{eq:232}
\\ & \lec \eqref{eq:169}  + \eqref{eq:237} + \eqref{eq:304} 
 + \nm{D_\aa^2 \bar{Z}_t}_{L^2} \cdot (\eqref{eq:355} + \eqref{eq:360})
\\ & \lec C(E).
  \end{aligned}
\end{equation}

\section{Controlling \texorpdfstring{$\Re  \int i \paren{\f{h_\a^2}{\abs{z_\a}^2}}_t  \th_\a \bar{\th} d\a$}{Re int i (ha2/za2)t theta a bar theta da}}
\label{sec:48}
We now show that we can control the following term from the RHS of \eqref{eq:66}:
\begin{equation}
  \label{eq:78}
  \Re  \int i \paren{\f{h_\a^2}{\abs{z_\a}^2}}_t  \th_\a \bar{\th} d\a = \Re \int i \paren{2 \f{h_{t\a}}{h_\a} - 2 \Re D_\a z_t} \f{h_\a^2}{\abs{z_{\a}}^2} \th_\a \bar{\th} d\a.
\end{equation}
Here, all results will be expressed in terms of general energy $E_{a,\th}$ for $\th$ satisfying $(I-\HH) (\th \circ h\i) = 0$ and $\bound{\th} = 0$, rather than specifying $\th = D_\a^2 \bar{z}_t$.\footnote{$D_\a^2\bar z_t$ satisfies $(I-\HH)D_\a^2 \bar{z}_t\circ h^{-1}=0, \bound{D_\a^2 \bar{z}_t}=0$, see \eqref{eq:315}.}

We begin by rewriting this as
\begin{equation}
  \label{eq:80}
  \begin{aligned}
     \Re \int i \paren{2 \f{h_{t\a}}{h_\a} - 2 \Re D_\a z_t} \f{h_\a^2}{\abs{z_{\a}}^2} \th_\a \bar{\th} d\a & = \Re \int 2 i\paren{\f{h_{t\a}}{h_\a} - \Re D_\a z_t}\paren{\partial_\a \paren{\th \f{h_\a}{z_\a}}} \bar{\th} \f{h_\a}{\bar{z}_\a} d\a
\\ & - \Re \int 2i \paren{\f{h_{t\a}}{h_\a} - \Re D_\a z_t} \paren{\f{h_\a}{\bar{z}_\a} \partial_\a \f{h_\a}{z_\a}} \th \bar{\th} d\a 
\\ & = I + II.
\end{aligned}
\end{equation}

II is easy to control, via H\"older's inequality and change of variables to Riemann mapping variables:
\begin{equation}
  \label{eq:82}
  \begin{aligned}
    \abs{II} & \lec \paren{\nm{\f{h_{t\a}}{h_\a}}_{L^\infty} + \nm{D_\a z_t}_{L^\infty}}\nm{D_\aa \f{1}{Z_{,\aa}}}_{L^\infty}  \nm{A_1}_{L^\infty} E_{a,\th}.
  \end{aligned}
\end{equation}
Therefore, we can focus on $I$ from \eqref{eq:80}. 
We introduce the following notations:
\begin{equation}
  \label{eq:83}
  \psi := \paren{\f{h_\a}{z_\a} \th} \circ h\i; \quad \Th := \th \circ h\i; \quad B := \paren{\f{h_{t\a}}{h_\a} - \Re D_\a z_t} \circ h\i=(h_t\circ h^{-1})_\aa-\Re D_\aa Z_t.
\end{equation}
We know $(I-\HH) \Th = 0$, $\bound{\Th}=0$, and
\begin{equation}
 \label{eq:555}
  \bound{B} = 0, \quad  \bound{\psi} = 0,\\
\end{equation}
\begin{equation}
  \label{eq:558}
  \quad (I-\HH) \psi = 0,
 \end{equation}
\begin{equation}
  \label{eq:565}
 \nm{\psi}_{\dot{H}^{1/2}}+ \nm{A_1}_{L^\infty}^{-1/2}\nm{\Th}_{L^2} \lec E_{a,\th}^{1/2},
\end{equation}
where 
\eqref{eq:555} follows from the assumption of Theorem~\ref{maintheorem} and \eqref{assumption2}; \eqref{eq:558} follows from \eqref{eq:1026a},  $(I-\HH) \Th = 0$ and  principle no.2~in \S \ref{sec:20} (and for $\th=D_\a^2\bar z_t$ specifically from \eqref{eq:456});
and \eqref{eq:565} is immediate from the definition of $E_{a, \theta}$ and change of variables. Upon changing variables, we can write $I= \Re \int 2i B (\partial_\aa \psi) \bar{\psi} d\aa $.

{\bf Step 1.} {\it{Green's identity.}} 
We now show that we can control $I$ from \eqref{eq:80}. The main idea is to use Green's identity 
to move the derivative from $\psi := \paren{\f{h_\a}{z_\a} \th} \circ h\i$ onto $B$. We note that $i\partial_\aa \psi = i \partial_\aa \HH \psi$ by \eqref{eq:558}, and that the operator  $i\partial_\aa \HH  = \nabla_n $, here $\nabla_n$ is 
the Dirichlet-Neumann operator.\footnote{Recall that the Dirichlet-Neumann operator is defined by $\nabla_n f :=\nabla_n f^\hbar$,  the outward-facing normal derivative of $f^\hbar$, where $f^\hbar$ is the extension of $f$ that is harmonic and periodic in $P^-$ and tending to a constant at infinity.  For $f$ real-valued, we can derive this  by noting that $(I+\HH)f$ is holomorphic, so $i\partial_\aa(I+\HH)f=\nabla_n (I+\HH)f$. Taking real parts gives the identity.} 
Letting  $\psi^\hbar$ and $B^\hbar$ be the (periodic) harmonic extension of $\psi$ and $B$ to $P^-$ respectively, we have 
\begin{equation}
  \label{eq:86}
  I 
   = \Re \int 2i B (\partial_\aa \psi) \bar{\psi} d\aa  = \Re \int 2 B (\nabla_n \psi) \bar{\psi}d S
= \int B \nabla_n (\abs{\psi^\hbar}^2) dS. 
\end{equation}
By Green's identity,\footnote{Here, to justify Green's identity, we can map (biholomorphically) the space $P^- $ to the unit disk minus the slit, and then use the periodicity of all of the functions involved to consider the harmonic extensions of these functions to the whole unit disk.}
\begin{equation}
  \label{eq:550}
  \begin{aligned}
    \int B \nabla_n (\abs{\psi^\hbar}^2) dS & = \int (\nabla_n B) \abs{\psi}^2 dS + \int_{P^-} B^\hbar \Delta (\abs{\psi^\hbar}^2) dV
\\ & = I_1 + I_2.
  \end{aligned}
\end{equation}
We control the second term, $I_2$, by
\begin{equation}
  \label{eq:87}
  \begin{aligned}
       \abs{I_2} & = \abs{\int_{P^-} B^\hbar \Delta (\abs{\psi^\hbar}^2) dV} 
 =   2\abs{ \int_{P^-} B^\hbar \abs{\nabla \psi^\hbar}^2 dV}
\\ &  \le 2\nm{B^\hbar}_{L^\infty} \int_{P^-} \abs{\nabla\psi^\hbar}^2 dV
 = \nm{B^\hbar}_{L^\infty}  \int_{P^-} \Delta (\abs{\psi^\hbar}^2)dV
\\ & = \nm{B}_{L^\infty} \int \nabla_n (\abs{\psi^\hbar}^2) dS
= 2\nm{B}_{L^\infty} \Re \int i (\partial_\aa \psi) \bar{\psi} d\aa,
\\ & =2 \nm{B}_{L^\infty} \nm{\psi}^2_{\dot{H}^{1/2}}\lec \paren{\nm{\f{h_{t\a}}{h_\a}}_{L^\infty} + \nm{D_\aa Z_t}_{L^\infty}}\nm{\psi}^2_{\dot{H}^{1/2}}
     \end{aligned}
   \end{equation}
by the maximum principle and another application of Green's identity.

{\bf Step 2.} {\it Controlling $I_1$.} 
We are left from Step 1 with controlling 
\begin{equation}
  \label{eq:89}
  \begin{aligned}
   I_1 & =  \int (\nabla_n B) \abs{\psi}^2 dS
   = \Re \int (i  \partial_\aa \HH    B) \abs{\psi}^2 d\aa
    \\ & = \Re \int (i  \HH\partial_\aa    B) \abs{\psi}^2 d\aa
      = \Re \int \f{1}{Z_{,\aa}} (i\HH \partial_\aa B) \Th \bar{\psi} d\aa.
\end{aligned}
\end{equation}

We commute the $\f{1}{Z_{,\aa}}$ factor inside the $\HH$, and then apply the adjoint property \eqref{eq:471}:
\begin{equation}
  \label{eq:90}
  \begin{aligned}
    I_{1} & = \Re \int i\paren{\bracket{\f{1}{Z_{,\aa}}, \HH}\partial_\aa B} \Th \bar{\psi} d\aa + \Re \int i \paren{\HH \paren{\f{1}{Z_{,\aa}} \partial_\aa B}} \Th \bar{\psi} d\aa 
\\ & = \Re \int i\paren{\bracket{\f{1}{Z_{,\aa}}, \HH}\partial_\aa B} \Th \bar{\psi} d\aa - \Re \int i \paren{\f{1}{Z_{,\aa}} \partial_\aa B} \HH \paren{\Th \bar{\psi}} d\aa 
\\ & = \Re \int i\paren{\bracket{\f{1}{Z_{,\aa}}, \HH}\partial_\aa B} \Th \bar{\psi} d\aa + \Re \int i \paren{\f{1}{Z_{,\aa}} \partial_\aa B} \bracket{\bar{\psi},\HH} \Th d\aa - \Re \int i \paren{\f{1}{Z_{,\aa}} \partial_\aa B}  \bar{\psi} \HH \Th d\aa 
\\ & = I_{11} + I_{12} + I_{13}.
\end{aligned}
\end{equation}

Observe that because $\HH \Th = \Th$,
\begin{equation}
  \label{eq:559}
    I_{13}  =  -\Re \int i \paren{\f{1}{Z_{,\aa}} \partial_\aa B}  \bar{\psi} \Th d\aa
      = -\Re \int i \paren{\partial_\aa B} \abs{\psi}^2 d\aa
     = 0,
\end{equation}
since $B \in \R$. It remains to control $I_{11}$ and $I_{12}$.

We use Cauchy-Schwarz inequality and then the  $\dot{H}^{1/2} \times L^2$ commutator estimate \eqref{eq:96} to control $I_{12}$:
\begin{equation}
  \label{eq:92}
    \abs{ I_{12}} \le \nm{D_\aa B}_{L^2} \nm{[\bar{\psi},\HH] \Th}_{L^2}
     \lec \nm{D_\aa B}_{L^2}  \nm{\bar{\psi}}_{\dot{H}^{1/2}} \nm{\Th}_{L^2}.
\end{equation}
We have controlled $\nm{\bar{\psi}}_{\dot{H}^{1/2}}$ and $\nm{\Th}_{L^2}$ at \eqref{eq:565}, and we will control $\nm{D_\aa B}_{L^2}$ by \eqref{eq:546} in Step 3 below.  

It remains to control $I_{11}$ from \eqref{eq:90}.  Here we use Proposition \ref{prop:iden}, identity \eqref{eq:19}.  Because $(I-\HH) \f{1}{Z_{,\aa}} = 
1$ \eqref{eq:1026a} and $\avg \partial_\aa B=0$ by \eqref{eq:555}, we can rewrite
\begin{equation}
  \label{eq:562}
    \bracket{\f{1}{Z_{,\aa}}, \HH}\partial_\aa B =
    \P_H  \paren{\bracket{\f{1}{Z_{,\aa}}, \HH}\partial_\aa B} - \f{1}{2}\avg D_\aa B.
\end{equation}
We plug \eqref{eq:562} into  $I_{11}$, and then use adjoint property \eqref{eq:471}:
\begin{equation}
  \label{eq:563}
  \begin{aligned}
    I_{11} 
    & = \Re \int i \braces{\P_H  \paren{\bracket{\f{1}{Z_{,\aa}}, \HH}\partial_\aa B}}\Th \bar{\psi} d\aa - \Re \braces{ \paren{ \f{1}{2}\avg D_\aa B} \int i \Th \bar{\psi} d\aa}
\\ & = \Re \int i \paren{\bracket{\f{1}{Z_{,\aa}}, \HH}\partial_\aa B}\P_A \paren{\Th \bar{\psi}} d\aa - \Re\braces{ \paren{ \f{1}{2}\avg D_\aa B} \int i \Th \bar{\psi} d\aa}.
  \end{aligned}
\end{equation}
To control the first term, we use Cauchy-Schwarz inequality, and then control the first factor with the $L^2 \times L^\infty$ estimate 
\eqref{eq:228} and control the second factor by rewriting it as a commutator by \eqref{eq:558} and then using the $\dot{H}^{1/2} \times L^2$ estimate
\eqref{eq:96}. We use \eqref{eq:251} to rewrite $\f{1}{\bar{Z}_{,\aa}} = -i\f{Z_{tt} + i}{A_1}$ in the second term.
\begin{equation}
  \label{eq:564}
  \begin{aligned}
    \abs{I_{11}} & \le \nm{\bracket{\f{1}{Z_{,\aa}}, \HH}\partial_\aa B}_{L^2} \nm{\P_A \paren{\Th \bar{\psi}}}_{L^2}+\abs{\paren{ \f{1}{2}\avg D_\aa B} \int i \Th \bar{\psi} d\aa}
\\ & \le \nm{\bracket{\f{1}{Z_{,\aa}}, \HH}\partial_\aa B}_{L^2} \nm{\f{1}{2}\bracket{\bar{\psi},\HH} \Th}_{L^2}
+ \nm{D_\aa B}_{L^2} \int \abs{\f{Z_{tt} + i}{A_1} \abs{\Th}^2} d\aa\\ & \lec \nm{\partial_\aa \f{1}{Z_{,\aa}}}_{L^2}\nm{B}_{L^\infty} \nm{\bar{\psi}}_{\dot{H}^{1/2}} \nm{\Th}_{L^2}+\nm{D_\aa B}_{L^2} \nm{Z_{tt} + i}_{L^\infty} E_{a,\th}.
  \end{aligned}
\end{equation}
We have controlled all the quantities on the last line, except for $\nm{D_\aa B}_{L^2}$.

{\bf Step 3.} {\it Controlling $\nm{D_\aa B}_{L^2}$.} 
We must control $\nm{D_\aa B}_{L^2}$, where $B$ is as defined by \eqref{eq:83}. By \eqref{eq:246a} and \eqref{eq:127}, rewriting the second and third terms in \eqref{eq:127} as commutators, we have
\begin{equation}
  \label{eq:530}
  B=  (h_t \circ h\i)_\aa - \Re D_\aa Z_t = 
   \Re D_\aa Z_t + \Re \braces{ \bracket{\f{1}{{Z}_{,\aa}},\HH} {Z}_{t,\aa} + [Z_t,\HH] \partial_\aa \f{1}{Z_{,\aa}}}.
\end{equation}
Therefore, noting that $\abs{\partial_\aa \Re f} \le \abs{\partial_\aa f}$ and so $\abs{D_\aa \Re f} \le \abs{D_\aa f}$,
\begin{equation}
  \label{eq:541}
  \nm{D_\aa B}_{L^2} \le 
  \nm{D_\aa^2 Z_t}_{L^2} + \nm{D_\aa   \bracket{\f{1}{{Z}_{,\aa}},\HH}  {Z}_{t,\aa}}_{L^2} + \nm{D_\aa[ Z_t, \HH]\partial_\aa \f{1}{Z_{,\aa}}}_{L^2}.
\end{equation}
We've controlled $\nm{D_\aa^2 Z_t}_{L^2}$, so it suffices to focus on the second and third terms. In what follows, we work on $D_\aa [f,\HH] \partial_\aa g$ for general functions $f$ and $g$ satisfying $\bound{f}=\bound{g}=0$. Once we have an appropriate estimate, we will apply it to $f=\f{1}{{Z}_{,\aa}}$, $g={Z}_{t}$ for the second term, and $f= Z_t$, $g=\f{1}{Z_{,\aa}}$ for the third term.

We know
\begin{equation}
  \label{eq:1081}
  \begin{aligned}
      D_\aa [f,\HH] \partial_\aa g
     & = \f{1}{Z_{,\aa}} \partial_\aa \f{1}{\hdenomconst} \int (f(\aa ) - f(\bb)) \cot(\f{\pi}{2}(\aa -\bb)) \partial_\bb g(\bb) d\bb
    \\ & = \f{1}{Z_{,\aa}} (\partial_\aa f) \HH \partial_\aa g - \f{1}{Z_{,\aa}} \f{1}{\hdenomconst} \int \f{\pi}{2} \f{f(\aa) - f(\bb)}{ \sin^2(\f{\pi}{2}(\aa - \bb))} \partial_\bb g(\bb)d\bb.
  \end{aligned}
\end{equation}
Via the boundedness of the Hilbert transform, we control the first of these terms by $\nm{D_\aa f}_{L^\infty} \nm{\partial_\aa g}_{L^2}$. Therefore, it suffices to focus on the second term.  We commute the $\f{1}{Z_{,\aa}}$ inside, getting
  \begin{equation}
    \label{eq:1091}
    -\f{\pi}{4i} \int \f{f(\aa) - f(\bb)}{\sin^2(\f{\pi}{2}(\aa - \bb))} D_\bb g(\bb) d\bb -\f{\pi}{4i} \int \f{(f(\aa) - f(\bb))}{\sin(\f{\pi}{2}(\aa - \bb))}\f{\paren{\f{1}{Z_{,\aa}}(\aa) - \f{1}{Z_{,\bb}}(\bb)}}{\sin(\f{\pi}{2}(\aa - \bb))} \partial_\bb g(\bb) d\bb.
  \end{equation}
We control the first term by \eqref{eq:258}:
\begin{equation}
  \label{eq:1101}
 \nm{ \f{\pi}{4i}\int \f{f(\aa) - f(\bb)}{\sin^2(\f{\pi}{2}(\aa - \bb))} D_\bb g(\bb) d\bb}_{L^2} \lec \nm{\partial_{\aa}f}_{L^2} \nm{D_\aa g}_{L^\infty}.
\end{equation}
We control the second term by \eqref{eq:113}:
\begin{equation}
  \label{eq:1111}
  \nm{\f{\pi}{4i}\int \f{(f(\aa) - f(\bb))}{ \sin(\f{\pi}{2}(\aa - \bb))}\f{\paren{\f{1}{Z_{,\aa}}(\aa) - \f{1}{Z_{,\bb}}(\bb)}}{\sin(\f{\pi}{2}(\aa - \bb))} \partial_\bb g(\bb) d\bb}_{L^2} \lec \nm{\partial_{\aa}f}_{L^2} \nm{\partial_\aa \f{1}{Z_{,\aa}}}_{L^2}\nm{\partial_{\aa}g}_{L^2}.
\end{equation}
We conclude that 
\begin{equation}
  \label{eq:5481}
  \begin{aligned}
    \nm{D_\aa [f, \HH] \partial_\aa g}_{L^2} & \lec \nm{D_\aa f}_{L^\infty} \nm{\partial_\aa g}_{L^2}+ \nm{D_\aa g}_{L^\infty} \nm{\partial_\aa f}_{L^2}+\nm{\partial_{\aa}f}_{L^2} \nm{\partial_\aa \f{1}{Z_{,\aa}}}_{L^2}\nm{\partial_{\aa}g}_{L^2}.
\end{aligned}
\end{equation}

We can conclude from \eqref{eq:541} and \eqref{eq:5481} that
\begin{equation}
  \label{eq:546}
    \nm{D_\aa B}_{L^2} 
 \lec \nm{D_\aa^2 Z_t}_{L^2} + \nm{\partial_\aa \f{1}{Z_{,\aa}}}_{L^2} \nm{D_\aa {Z}_t}_{L^\infty} + \nm{Z_{t,\aa}}_{L^2}\paren{ \nm{D_\aa \f{1}{Z_{,\aa}}}_{L^\infty} + \nm{\partial_\aa \f{1}{Z_{,\aa}}}_{L^2}^2}.
\end{equation}

{\bf Step 4.} {\it Conclusion.} 
We now combine our various estimates. We have
\begin{equation}
  \label{eq:552}
  \begin{aligned}
    \abs{ \Re  \int i \paren{\f{h_\a^2}{\abs{z_\a}^2}}_t  \th_\a \bar{\th} d\a} & \le \abs{I} + \abs{II} \le \abs{I_1} +\abs{I_2}+\eqref{eq:82}
    \\ &\le \abs{I_{11}} +\abs{I_{12}}  +\eqref{eq:87}+\eqref{eq:82}
    \\ & \le \eqref{eq:564}+\eqref{eq:92}+\eqref{eq:87}+\eqref{eq:82},
 \end{aligned}
\end{equation}
where we use \eqref{eq:546} to control $\nm{D_\aa B}_{L^2}$.

In particular, by specifying $\th=D_\a^2\bar z_t$, we have
\begin{equation}\label{eq:1027}
\abs{ \Re  \int i \paren{\f{h_\a^2}{\abs{z_\a}^2}}_t  \th_\a \bar{\th} d\a}\lec C(E).
\end{equation}

\section{Controlling $(I-\HH)((\partial_t+\frak b\partial_\aa)^2\Theta+i\AA \partial_\aa \Theta)$ }
We are left with controlling the $G_\theta$ terms in $E_a$ and $E_b$. Before we do so, we first study the quantity $(I-\HH)((\partial_t+\frak b\partial_\aa)^2\Theta+i\AA \partial_\aa \Theta)$ for a general $\Theta$ satisfying $(I-\HH)\Theta=0$. We have

\begin{lemma} Assume that $\Theta, (\partial_t+\frak b\partial_\aa)\Theta\in C^l([0, T], H^{k-l}(S^1))$ for $l=0,1$, $k\ge 2$, and $(I-\HH)\Theta=0$. And assume that the assumptions of Theorem~\ref{maintheorem} holds. Then
\begin{equation}\label{lemma-0}
(I-\HH) ((\partial_t+\frak b\partial_\aa)\Theta-Z_tD_\aa\Theta)=0;
\end{equation}
\begin{equation}\label{lemma-1}
(I-\HH)((\partial_t+\frak b\partial_\aa)^2\Theta+i\AA \partial_\aa \Theta)=[Z_t^2,\HH]D_\aa^2\Theta+2[ Z_t,\HH]D_\aa((\partial_t+\frak b\partial_\aa)\Theta-Z_tD_\aa\Theta)+2[Z_{tt},\HH]D_\aa\Theta;
\end{equation}
and
\begin{equation}\label{lemma-2}
\nm{(I-\HH)((\partial_t+\frak b\partial_\aa)^2\Theta+i\AA \partial_\aa \Theta)}_{L^2}\lec C(E)\paren{\nm{\Theta}_{L^2}+\nm{(\partial_t+\frak b\partial_\aa)\Theta}_{L^2}+\nm{\frac\Theta{Z_{,\aa}}}_{\dot H^{1/2}}}.
\end{equation}

\end{lemma}
\begin{proof}
By \eqref{eq:246a}, \begin{equation}\label{eq:2001}
\frak b=\mathbb P_A\paren{\frac{Z_t}{Z_{,\aa}}}+\mathbb P_H\paren{\frac{\bar Z_t}{\bar Z_{,\aa}}}=\frac{Z_t}{Z_{,\aa}}+\mathbb P_H\paren{\frac{\bar Z_t}{\bar Z_{,\aa}}-\frac{Z_t}{Z_{,\aa}}   },\end{equation}
so 
$$(\partial_t+\frak b\partial_\aa)\Theta=\partial_t\Theta+Z_tD_\aa\Theta+\mathbb P_H\paren{\frac{\bar Z_t}{\bar Z_{,\aa}}-\frac{Z_t}{Z_{,\aa}}   }\partial_\aa\Theta;$$
as a consequence of the dominated convergence theorem and  principles no.1 and no.2 in \S\ref{sec:20}, we have
\begin{equation}\label{eq:2002}
(I-\HH) ((\partial_t+\frak b\partial_\aa)\Theta-Z_tD_\aa\Theta)=0.
\end{equation}
Now since $\Psi:=(\partial_t+\frak b\partial_\aa)\Theta-Z_tD_\aa\Theta$ satisfies $(I-\HH)\Psi=0$, applying \eqref{eq:2002}  yields 
\begin{equation}\label{eq:2002a}
(I-\HH) ((\partial_t+\frak b\partial_\aa)\Psi-Z_tD_\aa\Psi)=0.
\end{equation}
 We compute
\begin{equation}\label{eq:2003}
\begin{aligned}
(\partial_t+\frak b\partial_\aa)\Psi-Z_tD_\aa\Psi&=(\partial_t+\frak b\partial_\aa)^2\Theta-(\partial_t+\frak b\partial_\aa)(Z_tD_\aa\Theta)-Z_tD_\aa (\partial_t+\frak b\partial_\aa)\Theta+Z_tD_\aa(Z_tD_\aa\Theta)\\&
=(\partial_t+\frak b\partial_\aa)^2\Theta-Z_{tt}D_\aa\Theta-2Z_tD_\aa( (\partial_t+\frak b\partial_\aa)\Theta-Z_tD_\aa\Theta)-Z_t^2D_\aa^2\Theta,
\end{aligned}
\end{equation}
and by \eqref{eq:179},
\begin{equation}\label{eq:2004}
i\AA \partial_\aa\Theta=(Z_{tt}+i)D_\aa\Theta;
\end{equation}
and we know by principles no.1 and no.2 in \S\ref{sec:20} that $(I-\HH)D_\aa\Theta=0$.  \eqref{eq:2002a}-\eqref{eq:2004} then gives
\begin{equation}\label{eq:2005}
(I-\HH)((\partial_t+\frak b\partial_\aa)^2\Theta+i\AA \partial_\aa \Theta)= (I-\HH)(2Z_{tt}D_\aa\Theta+2Z_tD_\aa( (\partial_t+\frak b\partial_\aa)\Theta-Z_tD_\aa\Theta)+Z_t^2D_\aa^2\Theta);
\end{equation}
using the holomorphicity of the factors to rewrite the right hand side of \eqref{eq:2005} as commutators yields \eqref{lemma-1}.

While \eqref{lemma-1} is sufficient to give us control of the $G_\theta$ term in $E_b$, we need \eqref{lemma-2}, which is the result of some further analysis of \eqref{eq:2005},
to control the $G_\theta$ term in $E_a$.\footnote{We can also use \eqref{lemma-2} to control the $G_\theta$ term in $E_b$.}

We begin with \eqref{eq:2005}, rewriting, using the product rule and the identity
$\frac{Z_t}{Z_{,\aa}}=\mathbb P_A \paren{ \frac{Z_t}{Z_{,\aa}}}+\mathbb P_H\paren{ \frac{Z_t}{Z_{,\aa}}},$
\begin{equation}\label{eq:2006}
Z_t^2D_\aa^2\Theta=\paren{ \mathbb P_A \paren{ \frac{Z_t}{Z_{,\aa}}}+\mathbb P_H\paren{ \frac{Z_t}{Z_{,\aa}}} }^2\partial_\aa^2\Theta+\frac{Z_t^2}{Z_{,\aa}}\paren{\partial_\aa\frac1{Z_{,\aa}}}\partial_\aa\Theta,
\end{equation}
and
\begin{equation}\label{eq:2007}
\begin{aligned}
2Z_tD_\aa&( (\partial_t+\frak b\partial_\aa)\Theta-Z_tD_\aa\Theta)=2\paren{ \mathbb P_A \paren{ \frac{Z_t}{Z_{,\aa}}}+\mathbb P_H\paren{ \frac{Z_t}{Z_{,\aa}}} }\partial_\aa((\partial_t+\frak b\partial_\aa)\Theta-Z_t D_\aa\Theta)\\&= 2\mathbb P_A \paren{ \frac{Z_t}{Z_{,\aa}}}\partial_\aa\paren{(\partial_t+\frak b\partial_\aa)\Theta-\paren{ \mathbb P_A \paren{ \frac{Z_t}{Z_{,\aa}}}+\mathbb P_H\paren{ \frac{Z_t}{Z_{,\aa}}} }\partial_\aa\Theta}\\&+2\mathbb P_H\paren{ \frac{Z_t}{Z_{,\aa}}} \partial_\aa((\partial_t+\frak b\partial_\aa)\Theta-Z_t D_\aa\Theta).
\end{aligned}
\end{equation}
Observe that, by \eqref{eq:2002} and principles no.1 and no.2 in \S\ref{sec:20},  the last quantity on the RHS of \eqref{eq:2007} is holomorphic with mean zero. Expanding further the right hand sides of \eqref{eq:2006} and \eqref{eq:2007} and sum up, observe that certain terms cancel out with others, and the quantity $\paren{ \mathbb P_H\paren{ \frac{Z_t}{Z_{,\aa}}} }^2\partial_\aa^2\Theta$ in \eqref{eq:2006} is holomorphic with mean zero, by 
principles no.1 and no.2 in \S\ref{sec:20}. We have
\begin{equation}\label{eq:2008}
\begin{aligned}
(I-\HH)&(2Z_tD_\aa( (\partial_t+\frak b\partial_\aa)\Theta-Z_tD_\aa\Theta)+Z_t^2D_\aa^2\Theta)\\&=(I-\HH)\paren{2\mathbb P_A \paren{ \frac{Z_t}{Z_{,\aa}}}\partial_\aa\paren{(\partial_t+\frak b\partial_\aa)\Theta- \mathbb P_A \paren{ \frac{Z_t}{Z_{,\aa}}}\partial_\aa\Theta}} \\&-(I-\HH)\paren{2\mathbb P_A \paren{ \frac{Z_t}{Z_{,\aa}}}\paren{\mathbb P_H\paren{ \frac{Z_{t,\aa}}{Z_{,\aa}}}+\mathbb P_H\paren{ {Z_{t}}\partial_\aa\frac1{Z_{,\aa}}}} \partial_\aa\Theta}
\\&+(I-\HH)\paren{\paren{ \mathbb P_A \paren{ \frac{Z_t}{Z_{,\aa}}} }^2\partial_\aa^2\Theta+\frac{Z_t^2}{Z_{,\aa}}\paren{\partial_\aa\frac1{Z_{,\aa}}}\partial_\aa\Theta    }.
\end{aligned}
\end{equation}
We analyze further the RHS of \eqref{eq:2008}. Observe that $(I-\HH)\paren{2\mathbb P_H \paren{ \frac{Z_t}{Z_{,\aa}}}\mathbb P_H\paren{ {Z_{t}}\partial_\aa\frac1{Z_{,\aa}}} \partial_\aa\Theta}
=0$ by principles no.1 and no.2 of \S\ref{sec:20}, we add this to the second term on the RHS of \eqref{eq:2008}; and observe further that
\begin{equation}\label{eq:2009}
-2 \frac{Z_t}{Z_{,\aa}}\mathbb P_H\paren{ {Z_{t}}\partial_\aa\frac1{Z_{,\aa}}} +\frac{Z_t^2}{Z_{,\aa}}\partial_\aa\frac1{Z_{,\aa}}= -\frac{Z_t}{Z_{,\aa}}\HH\paren{ {Z_{t}}\partial_\aa\frac1{Z_{,\aa}}}, \qquad\text{and}
\end{equation}
\begin{equation}\label{eq:2010}
-\frac{Z_t}{Z_{,\aa}}\HH\paren{ {Z_{t}}\partial_\aa\frac1{Z_{,\aa}}}\partial_\aa\Theta=-Z_t\HH\paren{ {Z_{t}}\partial_\aa\frac1{Z_{,\aa}}}\partial_\aa\paren{\frac{\Theta}{Z_{,\aa}}}+\paren{{Z_{t}}\partial_\aa\frac1{Z_{,\aa}}}\HH\paren{ {Z_{t}}\partial_\aa\frac1{Z_{,\aa}}}\Theta;
\end{equation}
and by straightforward expansion,
\begin{equation}\label{eq:2011}
\bracket{Z_t,\bracket{Z_t, \HH}}\partial_\aa\frac1{Z_{,\aa}}=-2Z_t\HH\paren{ {Z_{t}}\partial_\aa\frac1{Z_{,\aa}}}+(I+\HH)\paren{Z_t^2\partial_\aa\frac1{Z_{,\aa}}},
\end{equation}
and 
\begin{equation}\label{eq:2012}
{Z_{t}}\partial_\aa\frac1{Z_{,\aa}}\HH\paren{ {Z_{t}}\partial_\aa\frac1{Z_{,\aa}}}=\braces{\mathbb P_H\paren{{Z_{t}}\partial_\aa\frac1{Z_{,\aa}}}}^2-\braces{ \mathbb P_A\paren{{Z_{t}}\partial_\aa\frac1{Z_{,\aa}}}   }^2.
\end{equation}
Now beginning with \eqref{eq:2005}, combining the calculations in \eqref{eq:2008}-\eqref{eq:2012} and using principles no.1 and no.2 in \S\ref{sec:20}, we get
\begin{equation}\label{eq:2013}
  \begin{aligned}
 &(I-\HH)((\partial_t+\frak b\partial_\aa)^2\Theta +i\AA\partial_\aa\Theta)=(I-\HH)\paren{2\paren{\frac{Z_{tt}+i}{Z_{,\alpha'}}}\partial_{\alpha'} \Theta}\\&+(I-\HH)\paren{
 2\mathbb P_A\paren{\frac{Z_t} {Z_{,\alpha'}}}\partial_{\alpha'} \paren{(\partial_t+\frak b\partial_\aa)\Theta    -\mathbb P_A\paren{\frac{Z_t } {Z_{,\alpha'}}}\partial_{\alpha'} \Theta}}
\\& -(I-\HH)\paren{2\mathbb P_A \paren{ \frac{Z_t } {Z_{,\alpha'}} } \mathbb P_H\paren{ \frac{Z_{t,\alpha'} } {Z_{,\alpha'}}  }\partial_{\alpha'}\Theta}+ (I-\HH) \paren{\paren{\mathbb P_A\paren{\frac{Z_t } {Z_{,\alpha'}}}}^2\partial_{\alpha'}^2 \Theta}\\&+(I-\HH)\paren{\frac12\braces{[Z_t,[Z_t,\mathbb H]]\partial_{\alpha'}\frac1{Z_{,\alpha'}}}\partial_{\alpha'} \paren{\frac  \Theta{Z_{,\alpha'}}}-\braces{\mathbb P_A\paren{
 Z_t \partial_{\alpha'}\frac1{Z_{,\alpha'}}}}^2 \Theta}
.
\end{aligned}
 \end{equation}
 Using the holomorphicity of the factors to rewrite all, except for two,   terms on the right hand side of \eqref{eq:2013} as commutators gives
\begin{equation}\label{eq:2014}
  \begin{aligned}
 &(I-\HH)((\partial_t+\frak b\partial_\aa)^2\Theta +i\AA\partial_\aa\Theta)=2\bracket{\frac{Z_{tt}+i}{Z_{,\alpha'}},\HH}\partial_{\alpha'} \Theta\\&+2\bracket{
 \mathbb P_A\paren{\frac{Z_t} {Z_{,\alpha'}}},\HH}\partial_{\alpha'} \paren{(\partial_t+\frak b\partial_\aa)\Theta    -\mathbb P_A\paren{\frac{Z_t } {Z_{,\alpha'}}}\partial_{\alpha'} \Theta}
\\& -(I-\HH)\paren{2 \mathbb P_H\paren{ D_\aa Z_t }\mathbb P_A \paren{ \frac{Z_t } {Z_{,\alpha'}} }\partial_{\alpha'}\Theta}+ \bracket{\paren{\mathbb P_A\paren{\frac{Z_t } {Z_{,\alpha'}}}}^2,\HH}\partial_{\alpha'}^2 \Theta\\&+\bracket{\frac12\braces{[Z_t,[Z_t,\mathbb H]]\partial_{\alpha'}\frac1{Z_{,\alpha'}}},\HH}\partial_{\alpha'} \paren{\frac  \Theta{Z_{,\alpha'}}}-(I-\HH)\paren{\braces{\mathbb P_A\paren{
 Z_t \partial_{\alpha'}\frac1{Z_{,\alpha'}}}}^2 \Theta}
.
\end{aligned}
 \end{equation}
 By the identity\footnote{It is an easy consequence of integration by parts.}
 \begin{equation}\label{eq:2015}
 -2[g_1,\HH]\partial_\aa(g_1 g_2)+[g_1^2,\HH]\partial_\aa g_2=-[g_1, g_1; g_2],
 \end{equation}
 we combine part of the second term and the fourth term on the RHS of \eqref{eq:2014}:
 \begin{equation}\label{eq:2016}
 \begin{aligned}
 -2\bracket{
 \mathbb P_A\paren{\frac{Z_t} {Z_{,\alpha'}}},\HH}\partial_{\alpha'} \paren{\mathbb P_A\paren{\frac{Z_t } {Z_{,\alpha'}}}\partial_{\alpha'} \Theta}&+\bracket{\paren{\mathbb P_A\paren{\frac{Z_t } {Z_{,\alpha'}}}}^2,\HH}\partial_{\alpha'}^2 \Theta\\&=-\bracket{ \mathbb P_A\paren{\frac{Z_t} {Z_{,\alpha'}}}, \mathbb P_A\paren{\frac{Z_t} {Z_{,\alpha'}}}; \partial_\aa\Theta}.
 \end{aligned}
 \end{equation}
Observe that, using the identity $\mathbb P_A+\mathbb P_H=I$ and the fact that $(I-\HH)\partial_\aa\Theta=0$,
 \begin{equation}\label{eq:2017}
 \begin{aligned}
 (I-\HH)\paren{2 \mathbb P_H\paren{ D_\aa Z_t }\mathbb P_A \paren{ \frac{Z_t } {Z_{,\alpha'}} }\partial_{\alpha'}\Theta}&=(I-\HH)\paren{\mathbb P_H\paren{ D_\aa Z_t }\bracket{ \mathbb P_A \paren{ \frac{Z_t } {Z_{,\alpha'}} },\HH}\partial_{\alpha'}\Theta}\\&+(I-\HH)\paren{2 \mathbb P_H\paren{ D_\aa Z_t }\mathbb P_H\braces{\mathbb P_A \paren{ \frac{Z_t } {Z_{,\alpha'}} }\partial_{\alpha'}\Theta}};
 \end{aligned}
 \end{equation}
by \eqref{eq:160} the second term equals to the mean
\begin{equation}\label{eq:2018}
\begin{aligned}
(I-\HH)\paren{2 \mathbb P_H\paren{ D_\aa Z_t }\mathbb P_H\braces{\mathbb P_A \paren{ \frac{Z_t } {Z_{,\alpha'}} }\partial_{\alpha'}\Theta}}&=\frac12 \paren{\avg D_\aa Z_t\,d\aa}\paren{\avg \mathbb P_A \paren{ \frac{Z_t } {Z_{,\alpha'}} }\partial_{\alpha'}\Theta\,d\aa}\\&=-
\frac12 \paren{\avg D_\aa Z_t\,d\aa}\paren{\avg \partial_\aa\mathbb P_A \paren{ \frac{Z_t } {Z_{,\alpha'}} }\Theta\,d\aa}.
\end{aligned}
\end{equation}
We can now conclude, from \eqref{eq:2014}-\eqref{eq:2018} and  using \eqref{eq:305}, \eqref{eq:21} and \eqref{eq:6a} that
\begin{equation}\label{eq:2019}
\begin{aligned}
&\nm{(I-\HH)((\partial_t+\frak b\partial_\aa)^2\Theta +i\AA\partial_\aa\Theta)}_{L^2}\lec \nm{\partial_\aa{\frac{Z_{tt}+i}{Z_{,\alpha'}}}}_{L^\infty}\nm{\Theta}_{L^2}\\&+\nm{\partial_\aa{ \mathbb P_A\paren{\frac{Z_t} {Z_{,\alpha'}}}}}_{L^\infty}\nm{(\partial_t+\frak b\partial_\aa)\Theta}_{L^2}
+\nm{\partial_\aa{ \braces{[Z_t,[Z_t,\mathbb H]]\partial_{\alpha'}\frac1{Z_{,\alpha'}}}  } }_{L^2}\nm{\frac{\Theta}{Z_{,\aa}}}_{\dot H^{1/2}}\\&+ \paren{\nm{\partial_\aa{ \mathbb P_A\paren{\frac{Z_t} {Z_{,\alpha'}}}}}_{L^\infty}^2+\nm{\mathbb P_A\paren{
 Z_t \partial_{\alpha'}\frac1{Z_{,\alpha'}}}}_{L^\infty}^2}\nm{\Theta}_{L^2}
 \\&+(\nm{\mathbb P_H D_\aa Z_t}_{L^\infty}+\nm{D_\aa Z_t}_{L^\infty})\nm{\partial_\aa{ \mathbb P_A\paren{\frac{Z_t} {Z_{,\alpha'}}}}}_{L^\infty}\nm{\Theta}_{L^2}.
\end{aligned}
\end{equation}
We have estimated all the factors on the RHS of \eqref{eq:2019}, except for $\nm{\partial_\aa{ \braces{[Z_t,[Z_t,\mathbb H]]\partial_{\alpha'}\frac1{Z_{,\alpha'}}}  } }_{L^2}$, which can be controlled by \eqref{eq:144}:
\begin{equation}\label{eq:2020}
\nm{\partial_\aa{ \braces{[Z_t,[Z_t,\mathbb H]]\partial_{\alpha'}\frac1{Z_{,\alpha'}}}  } }_{L^2}\lec \nm{Z_{t,\aa}}_{L^2}^2\nm{\partial_\aa\frac1{Z_{,\aa}}}_{L^2}\lec C(E).
\end{equation}
This proves \eqref{lemma-2}.

\end{proof}

\section{Controlling \texorpdfstring{$G_\th$}{G theta} of \texorpdfstring{$E_b$}{Eb}}\label{sec:E2}
By \eqref{eq:173}, we must control
\begin{equation}
  \label{eq:421}
  \paren{\int \f{1}{\af} \abs{ D_\a(-i \af_t \bar{z}_\a) + [\partial_t^2 + i \af \partial_\a, D_\a] \bar{z}_t}^2 d\a}^{1/2}.
\end{equation}

We control the commutator via \eqref{eq:53}:
\begin{equation}
  \label{eq:422}
  \paren{\int \f{1}{\af} \abs{[\partial_t^2 + i \af \partial_\a, D_\a] \bar{z}_t}^2 d\a}^{1/2}  \lec (\nm{D_\a z_{tt}}_{L^\infty} + \nm{D_\a z_t}_{L^\infty}^2) \nm{D_\a \bar{z}_t}_{L^2(\f{1}{\af} d\a)},
\end{equation}
where we have controlled all the quantities on the RHS in \S \ref{sec:quants}. We are left with the term $\paren{\int \f{1}{\af} \abs{D_\a(-i\af_t \bar{z}_\a)}^2 d\a}^{1/2}$. 
Since ${\af}\abs{z_\a}^2=(A_1 \circ h) {h_\a}$ \eqref{eq:209}, $A_1\ge 1$  \eqref{eq:394}, and $(\af_t \bar{z}_\a)\circ h^{-1}=\AA_t \bar{Z}_{,\aa}$ we have
\begin{equation}\label{eq:1033}
\paren{\int \f{1}{\af} \abs{D_\a(-i\af_t \bar{z}_\a)}^2 d\a}^{1/2}\le \paren{\int \f1{h_\a} \abs{\partial_\a(-i\af_t \bar{z}_\a)}^2 d\a}^{1/2}=\paren{\int \abs{\partial_\aa(-i\AA_t \bar{Z}_{,\aa})}^2 d\aa}^{1/2},
\end{equation}
 where in the second step we changed to  Riemann mapping variables. We write $-i\AA_t \bar{Z}_{,\aa}$ as $\f{\AA_t}{\AA} (-i\AA \bar{Z}_{,\aa})$ and apply $\partial_\aa$. Since $\bar{Z}_{tt}-i =-i \AA \bar{Z}_{,\aa}$ \eqref{eq:179}, we have
 \begin{equation}
  \label{eq:218}
 \partial_\aa \paren{-i\AAt \bar{Z}_{,\aa}}=  (-i \AA \bar{Z}_{,\aa})\partial_\aa \paren{\f{\AAt}{\AA}} + \f{\AAt}{\AA} \bar{Z}_{tt,\aa}.
\end{equation}
Therefore,
\begin{equation}\label{eq:214}
\paren{\int \abs{\partial_\aa(-i\AA_t \bar{Z}_{,\aa})}^2 d\aa}^{1/2}\le \paren{\int  \abs{\mathcal A \bar{Z}_{,\aa} \partial_\aa \paren{\f{\mathcal A_t}{\mathcal A}}}^2 d\aa}^{1/2}+\nm{\f{\AA_t}{\AA}}_{L^\infty}\nm{
\bar{Z}_{tt,\aa}}_{L^2}.
\end{equation}
We controlled the factors in the second term on the RHS in \eqref{eq:406} and \eqref{eq:424}. We can therefore concentrate on the first term.  

We  seek a way of writing $ \mathcal A \bar{Z}_{,\aa} \partial_\aa \paren{\f{\AAt}{\AA}}$. The idea is to take advantage of the fact that $\partial_\aa \paren{\f{\AAt}{\AA}}$ is real, $|f|\le |(I-\mathbb H)f|$ for $f$ real, and $\mathcal A \bar{Z}_{,\aa}=i(\bar Z_{tt}-i)$ is controllable   to bound the term $ \mathcal A \bar{Z}_{,\aa} \partial_\aa \paren{\f{\AAt}{\AA}}$ by a sum of controllable terms and commutators.


Starting from \eqref{eq:218}, we replace the LHS by the derivative of the LHS of our quasilinear equation \eqref{eq:178}, and then apply $(I-\HH)$ to the equation. We get
\begin{equation}
  \label{eq:61}
  (I-\HH) \braces{(-i \AA \bar{Z}_{,\aa}) \partial_\aa \f{\AAt}{\AA}} = (I-\HH) \partial_\aa \paren{\bar{Z}_{ttt} + i \AA \bar{Z}_{t,\aa}} - (I-\HH) \braces{\f{\AAt}{\AA} \partial_\aa \bar{Z}_{tt}}.
\end{equation}
We next commute the factor  $-i\AA \bar{Z}_{,\aa}$ outside of $(I-\mathbb H)$ on the LHS, 
\begin{equation}
  \label{eq:69}
  (-i\AA \bar{Z}_{,\aa}) (I-\HH) \partial_\aa \f{\AAt}{\AA} = (I-\HH) \partial_\aa \paren{\bar{Z}_{ttt} + i \AA \bar{Z}_{t,\aa}} - (I-\HH)  \braces{\f{\AAt}{\AA} \partial_\aa \bar{Z}_{tt}} + [i\AA \bar{Z}_{,\aa},\HH] \partial_\aa \f{\AAt}{\AA}.
\end{equation}
Now $\f{\AAt}{\AA}$ is real and $\HH$ is purely imaginary, so $\abs{ \partial_\aa \f{\AAt}{\AA} }\le\abs{  (I-\HH) \partial_\aa \f{\AAt}{\AA} }$.
Taking absolute value on both sides,   we have
\begin{equation}
  \label{eq:482}
  \abs{\AA \bar{Z}_{,\aa} \partial_\aa \f{\AAt}{\AA}} \le \abs{(I-\HH) \partial_\aa \paren{\bar{Z}_{ttt} + i \AA \bar{Z}_{t,\aa}} - (I-\HH)  \braces{\f{\AAt}{\AA} \partial_\aa \bar{Z}_{tt}} + [i\AA \bar{Z}_{,\aa},\HH] \partial_\aa \f{\AAt}{\AA}}.
\end{equation}

We can easily control the $L^2$ norm of the second and third terms. By the $L^2$ boundedness of $\HH$ and H\"older's inequality for the second term and estimate \eqref{eq:228} for the third term, and since $i \AA \bar{Z}_{,\aa} = -(\bar{Z}_{tt} - i)$, 
\begin{equation}
  \label{eq:227}
  \nm{ - (I-\HH)  \braces{\f{\AAt}{\AA} \bar{Z}_{tt,\aa}} - [i\AA \bar{Z}_{,\aa},\HH] \partial_\aa \f{\AAt}{\AA} }_{L^2} \lec \nm{Z_{tt,\aa}}_{L^2} \nm{\f{\AAt}{\AA}}_{L^\infty}.
\end{equation}

We can now focus on controlling
\begin{equation}
  \label{eq:74}
  \nm{(I-\HH) \partial_\aa \paren{\bar{Z}_{ttt} + i \AA \bar{Z}_{t,\aa}}}_{L^2}=\nm{\partial_\aa  (I-\HH) \paren{\bar{Z}_{ttt} + i \AA \bar{Z}_{t,\aa}}}_{L^2},
\end{equation}
where we used $\paren{\bar{Z}_{ttt} + i \AA \bar{Z}_{t,\aa}}\in C^2(S^1)$, which follows from equations \eqref{eq:178}, \eqref{eq:179}, \eqref{eq:210} and the assumption of Theorem~\ref{maintheorem},
 to  commute $\partial_\aa$ outside $(I-\HH)$. 

By \eqref{lemma-1}, taking $\Theta=\bar Z_t$, we have 
\begin{equation}
  \label{eq:250}
   (I-\HH) \paren{\bar{Z}_{ttt} + i \AA \bar{Z}_{t,\aa} } = [Z_t^2,\HH] D_\aa^2 \bar{Z}_t + 2 [Z_t, \HH] D_\aa (\bar{Z}_{tt} - (D_\aa \bar{Z}_t) Z_t) + 2 [Z_{tt},\HH] D_\aa \bar{Z}_t.
\end{equation}
Therefore, by \eqref{eq:74} and \eqref{eq:250}, we have to control the $L^2$ norm of 
\begin{equation}
  \label{eq:163}
  \partial_\aa [Z_t^2,\HH] D_\aa^2 \bar{Z}_t + 2 \partial_\aa [Z_t, \HH] D_\aa (\bar{Z}_{tt} - (D_\aa \bar{Z}_t) Z_t) + 2 \partial_\aa [Z_{tt},\HH] D_\aa \bar{Z}_t.
\end{equation}
We use the identity 
\begin{equation}
  \label{eq:269}
  \partial_\aa [f,\HH] g = f' \HH g - \f{1}{\hdenomconst} \int \f{\pi}{2} \f{f(\aa) - f(\bb)}{\sin^2(\f{\pi}{2}(\aa -\bb))} g(\bb) d\bb
\end{equation}
to expand out each term in \eqref{eq:163}, and use \eqref{eq:315} and \eqref{eq:313} to remove the  $\HH$s from the RHS. We get
\begin{equation}
  \label{eq:36}
\begin{aligned}
\eqref{eq:163}&= 
   2Z_t Z_{t,\aa} D_\aa^2 \bar Z_t+2 Z_{t,\aa} D_\aa(\bar{Z}_{tt} - (D_\aa \bar{Z}_t) Z_t) + 2 Z_{tt,\a}  D_\aa \bar{Z}_t 
   - \f{\pi}{4i} \int  \f{Z_t^2(\aa) - Z_t^2(\bb)}{\sin^2(\f{\pi}{2}(\aa - \bb))} D_\bb^2 \bar Z_t d\bb 
\\& - \f{\pi}{2i} \int  \f{Z_t(\aa) - Z_t(\bb)}{\sin^2(\f{\pi}{2}(\aa - \bb))} D_\bb (\bar{Z}_{tt} - (D_\bb \bar{Z}_t) Z_t) d\bb 
 - \f{\pi}{2i} \int  \f{Z_{tt}(\aa) - Z_{tt}(\bb)}{\sin^2(\f{\pi}{2}(\aa -\bb))} D_\bb \bar{Z}_t d\bb.
\end{aligned}
\end{equation}
We expand out the RHS. We note that certain terms cancel out with others, and we further observe the following identity:
\begin{equation}
  \label{eq:235}
     \int \f {f^2(\aa) - f^2(\bb) } {\sin^2(\f{\pi}{2}(\aa - \bb)) }g(\bb) d\bb -2\int \f {(f(\aa) - f(\bb)) f(\bb) } {\sin^2(\f{\pi}{2}(\aa - \bb)) }g(\bb) d\bb 
 =  \int \f{(f(\aa) - f(\bb))^2 }{\sin^2(\f{\pi}{2}(\aa - \bb))} g(\bb) d\bb.
\end{equation}
We have
\begin{equation}\label{eq:1032}
\begin{aligned}
\eqref{eq:163}&= 2 Z_{t,\aa} (D_\aa \bar{Z}_{tt} - (D_\aa \bar{Z}_t) D_\aa Z_t) + 2 Z_{tt,\a}  D_\aa \bar{Z}_t 
   - \f{\pi}{4i} \int  \f{(Z_t(\aa) - Z_t(\bb))^2}{\sin^2(\f{\pi}{2}(\aa - \bb))} D_\bb^2 \bar Z_t d\bb 
\\& - \f{\pi}{2i} \int \f{Z_t(\aa) - Z_t(\bb)}{\sin^2(\f{\pi}{2}(\aa - \bb))} (D_\bb \bar{Z}_{tt} - (D_\bb \bar{Z}_t) D_\bb Z_t) d\bb 
 - \f{\pi}{2i} \int  \f{Z_{tt}(\aa) - Z_{tt}(\bb)}{\sin^2(\f{\pi}{2}(\aa -\bb))} D_\bb \bar{Z}_t d\bb.
\end{aligned}
\end{equation}
We now apply H\"older's inequality to the first two terms, \eqref{eq:113} to the third term, and \eqref{eq:258} to the last two terms.  We get
\begin{equation}
  \label{eq:310}
  \begin{aligned}
   &\nm{(I-\HH) \partial_\aa \paren{\bar{Z}_{ttt} + i \AA \bar{Z}_{t,\aa}}}_{L^2} =\nm{\eqref{eq:163}}_{L^2} \\&\lec \nm{Z_{t,\aa}}_{L^2} (\nm{D_\aa \bar{Z}_{tt}}_{L^\infty} + \nm{D_\aa \bar{Z}_t}_{L^\infty}^2) + \nm{Z_{tt,\aa}}_{L^2} \nm{D_\aa \bar{Z}_t}_{L^\infty} 
 +  \nm{Z_{t,\aa}}_{L^2}^2 \nm{D_\aa^2 \bar{Z}_t}_{L^2}.
 \end{aligned}
\end{equation}

We now combine our various estimates. We have
\begin{equation}\label{eq:327}
\nm{\AA \bar{Z}_{,\aa} \partial_\aa \f{\AAt}{\AA}}_{L^2}\le \eqref{eq:227}+\eqref{eq:310}\lec C(E)
\end{equation}
and
\begin{equation}\label{eq:311}
\begin{aligned}
 &\paren{\int \f{1}{\af} \abs{ D_\a(-i \af_t \bar{z}_\a) + [\partial_t^2 + i \af \partial_\a, D_\a] \bar{z}_t}^2 d\a}^{1/2}\le \eqref{eq:422}+\eqref{eq:214}\\ & \le \eqref{eq:422}+\eqref{eq:327}+\nm{\f{\AA_t}{\AA}}_{L^\infty} \nm{ \bar{Z}_{tt,\aa}}_{L^2}\lec C(E).
 \end{aligned}
 \end{equation}

We can now conclude that $\f d{dt} E_b$ is bounded by a polynomial of $E$.

We record here the estimate
\begin{equation}\label{eq:1034}
\nm{D_\aa \f{\AAt}{\AA}}_{L^2}\le \nm{\AA \bar{Z}_{,\aa} \partial_\aa \f{\AAt}{\AA}}_{L^2} \lec C(E),
\end{equation}
which holds because $\abs{\AA\bar{Z}_{,\aa}}=\f{A_1}{\abs{{Z}_{,\aa}}}\ge \f{1}{\abs{{Z}_{,\aa}}}$; we will use this in \S \ref{controlE1}.

\section{Controlling \texorpdfstring{$G_\th$}{G theta} of \texorpdfstring{$E_a$}{Ea}}\label{controlE1}
From \eqref{eq:66}, we must control
\begin{equation}
  \label{eq:399}
  \paren{\int  \abs{D_\a^2 (-i \af_t \bar{z}_\a) + [\partial_t^2 + i \af \partial_\a, D_\a^2]\bar{z}_t}^2\f{h_\a}{A_1 \circ h}d\a}^{1/2}.
\end{equation}

Recall that $A_1\ge 1$ \eqref{eq:394}. We control the commutator via  \eqref{eq:52} and H\"older's inequality: 
\begin{equation}
  \label{eq:415}
  \begin{aligned}
 & \nm{[\partial_t^2 + i \af \partial_\a, D_\a^2]\bar{z}_t}_{L^2(\f{h_\a}{A_1 \circ h}d\a)}\le  \nm{[\partial_t^2 + i \af \partial_\a, D_\a^2]\bar{z}_t}_{L^2({h_\a}d\a)}\\& \lec  \nm{D_\a z_{tt}}_{L^\infty} \nm{D_\a^2 \bar{z}_t}_{L^2({h_\a}d\a)} + \nm{D_\a z_t}_{L^\infty}^2 \nm{D_\a^2 \bar{z}_t}_{L^2({h_\a}d\a)} + \nm{D_\a z_t}_{L^\infty}\nm{D_\a \partial_t D_\a \bar{z}_t}_{L^2({h_\a}d\a)} \\&+ \nm{D_\a^2 z_{tt}}_{L^2({h_\a}d\a)} \nm{D_\a \bar{z}_t}_{L^\infty} + \nm{D_\a z_t}_{L^\infty}^2 \nm{D_\a^2 z_t}_{L^2({h_\a}d\a)} + \nm{D_\a^2 z_t}_{L^2({h_\a}d\a)} \nm{D_\a \bar{z}_{tt}}_{L^\infty} \\& + \nm{D_\a z_t}_{L^\infty} \nm{D_\a^2 \bar{z}_{tt}}_{L^2({h_\a}d\a)}.
 \end{aligned}
\end{equation}
We have controlled all quantities on the RHS in \S \ref{sec:quants}. We are left with the term $\paren{\int \abs{D_\a^2 \paren{\af_t \bar{z}_\a}}^2\f{h_\a}{A_1 \circ h} d\a}^{1/2}$. 

We know
\begin{equation}
  \label{eq:134a}
  \paren{\int \abs{D_\a^2 \paren{\af_t \bar{z}_\a}}^2\f{h_\a}{A_1 \circ h} d\a}^{1/2}\le \paren{\int \abs{D_\a^2 \paren{\af_t \bar{z}_\a}}^2{h_\a} d\a}^{1/2}=\paren{\int \abs{D_\aa^2 \paren{\AA_t \bar{Z}_{,\aa}}}^2 d\aa}^{1/2},
\end{equation}
where we  changed to Riemann mapping coordinate in the second step. We will now  focus on estimating 
\begin{equation}
  \label{eq:10}
  \paren{\int \abs{D_\aa^2 \paren{\AA_t \bar{Z}_{,\aa}}}^2 d\aa}^{1/2}.
\end{equation}

Our plan is to first turn the task of controlling $\nm{D_\aa^2 \paren{\AA_t \bar{Z}_{,\aa}}}_{L^2}$ to controlling $\nm{(I-\mathbb H)\paren{D_\aa^2 \paren{\AA_t \bar{Z}_{,\aa}}}}_{L^2}$. We will use the same idea as in the previous section, \S \ref{sec:E2},  that is, to take advantage of the fact that $\f{\AA_t}{\AA}$ is  real-valued and $\Re (I-\mathbb H)f=f$ for real valued $f$.  
We will then use \eqref{lemma-2} to control $\nm{(I-\mathbb H)\paren{D_\aa^2 \paren{\AA_t \bar{Z}_{,\aa}}}}_{L^2}$.

We begin by writing $\AA_t \bar{Z}_{,\aa} = \paren{\f{\AA_t}{\AA}} \AA \bar{Z}_{,\aa}$.  By the product rule,

\begin{equation}
  \label{eq:46}
  D_\aa^2 \paren{\AA_t \bar{Z}_{,\aa}} = \paren{D_\aa^2 \paren{\f{\AA_t}{\AA}}} \AA \bar{Z}_{,\aa} + 2 D_\aa \paren{\f{\AA_t}{\AA}} D_\aa(\AA \bar{Z}_{,\aa}) + \f{\AA_t}{\AA} D_\aa^2 (\AA \bar{Z}_{,\aa}).
\end{equation}
We can handle the second and third terms directly, using $\AA \bar{Z}_{,\aa} = i(\bar{Z}_{tt} - i)$ \eqref{eq:179}:
\begin{equation}
  \label{eq:316}
    \nm{2D_\aa \paren{\f{\AA_t}{\AA}}D_\aa(\AA \bar{Z}_{,\aa})+\f{\AA_t}{\AA} D_\a^2 (\AA \bar{Z}_{,\aa})}_{L^2}     \le 
    2\nm{D_\aa \paren{\f{\AA_t}{\AA}}}_{L^2} \nm{D_\a \bar{Z}_{tt}}_{L^\infty}+\nm{\f{\AA_t}{\AA}}_{L^\infty} \nm{D_\a^2 \bar{Z}_{tt}}_{L^2},
\end{equation}
where we have controlled all the quantities on the RHS in \S \ref{sec:quants} and in \eqref{eq:424} and \eqref{eq:1034}.   It therefore suffices to focus on the first term on the RHS of \eqref{eq:46}, $ \paren{D_\aa^2 \paren{\f{\AA_t}{\AA}}} \AA \bar{Z}_{,\aa}=i(\bar Z_{tt}-i) D_\aa^2 \paren{\f{\AA_t}{\AA}} $.

We now rearrange this term so that we can apply $(I-\HH)$ in a way so that we will be able to invert the operator by taking real parts.  Note that $\f{\AA_t}{\AA}$ is purely real. However, our derivative $D_\aa = \f{1}{Z_{,\aa}} \partial_\aa$ is not purely real.  To get around this, we factor the derivative into a real derivative and a complex modulus-one weight.  Recall our notation 
 $ \abs{D_\aa} = \f{1}{\abs{Z_{,\aa}}} \partial_\aa$. 
 Since $D_\aa = \paren{\f{\abs{Z_{,\aa}}}{Z_{,\aa}}} \abs{D_\aa}$, we rewrite
\begin{equation}
  \label{eq:320}
  D_\aa^2 = \paren{\f{\abs{Z_{,\aa}}}{Z_{,\aa}}}^2 \abs{D_\aa}^2 + \paren{\f{\abs{Z_{,\aa}}}{Z_{,\aa}}}\paren{\abs{D_\aa} \paren{\f{\abs{Z_{,\aa}}}{Z_{,\aa}}}} \abs{D_\aa}.
\end{equation}
Therefore, 
\begin{equation}
  \label{eq:321}
i(\bar Z_{tt}-i)    D_\aa^2 \f{\AA_t}{\AA} = i(\bar Z_{tt}-i)  \paren{\f{\abs{Z_{,\aa}}}{Z_{,\aa}}}^2 \abs{D_\aa}^2 \f{\AA_t}{\AA} + i(\bar Z_{tt}-i)  \paren{\f{\abs{Z_{,\aa}}}{Z_{,\aa}}}\paren{\abs{D_\aa} \paren{\f{\abs{Z_{,\aa}}}{Z_{,\aa}}}} \abs{D_\aa}\f{\AA_t}{\AA}.
\end{equation}
We use 
\begin{equation}  \label{eq:324}
  e := 
  i(\bar{Z}_{tt} - i) \paren{\f{\abs{Z_{,\aa}}}{Z_{,\aa}}}\paren{\abs{D_\aa} \paren{\f{\abs{Z_{,\aa}}}{Z_{,\aa}}}} \abs{D_\aa}\f{\AAt}{\AA}\end{equation}
to denote the second term,
which we will control directly, below at \eqref{eq:325}. 
We now apply $(I-\HH)$ to both sides of \eqref{eq:321}:
\begin{equation}
  \label{eq:55}
  (I-\HH) \braces{i(\bar{Z}_{tt} - i) D_\aa^2 \f{\AAt}{\AA}} = (I-\HH) \braces{i\f{(\bar{Z}_{tt} - i) }{\abs{Z_{,\aa}}}\paren{\f{\abs{Z_{,\aa}}}{Z_{,\aa}}}^2 \partial_\aa \paren{ \abs{D_\aa} \f{\AAt}{\AA}}} + (I-\HH) e.
\end{equation}
Observe that the first term on the RHS is purely real, except for the controllable factor 
$i\f{(\bar{Z}_{tt} - i) }{\abs{Z_{,\aa}}}\paren{\f{\abs{Z_{,\aa}}}{Z_{,\aa}}}^2 $.
We commute that part outside the $(I-\HH)$.  We get
\begin{equation}
  \label{eq:56}
  \begin{aligned}
      (I-\HH) \braces{i(\bar{Z}_{tt} - i) D_\aa^2 \f{\AAt}{\AA}} & = i\f{(\bar{Z}_{tt} - i)}{\abs{Z_{,\aa}}} \paren{\f{\abs{Z_{,\aa}}}{Z_{,\aa}}}^2  (I-\HH) \partial_\aa \paren{ \abs{D_\aa} \f{\AAt}{\AA}} 
\\ & + \bracket{i\f{(\bar{Z}_{tt} - i)}{\abs{Z_{,\aa}}}  \paren{\f{\abs{Z_{,\aa}}}{Z_{,\aa}}}^2,\HH}  \partial_\aa \paren{ \abs{D_\aa} \f{\AAt}{\AA}} + (I-\HH) e.
  \end{aligned}
\end{equation}
Because $\HH$ is purely imaginary,  $\abs{\partial_\aa \paren{ \abs{D_\aa} \f{\AAt}{\AA}}}\le \abs{(I-\HH) \partial_\aa \paren{ \abs{D_\aa} \f{\AAt}{\AA}} }$. By taking absolute values, we have
\begin{multline}
  \label{eq:58}
  \abs{i\f{(\bar{Z}_{tt} - i)}{\abs{Z_{,\aa}}} \partial_\aa \paren{ \abs{D_\aa} \f{\AAt}{\AA}}}
 \\ \le \abs{(I-\HH) \braces{i(\bar{Z}_{tt} - i) D_\aa^2 \f{\AAt}{\AA}} - \bracket{i\f{(\bar{Z}_{tt} - i)}{\abs{Z_{,\aa}}} \paren{\f{\abs{Z_{,\aa}}}{Z_{,\aa}}}^2,\HH}  \partial_\aa \paren{ \abs{D_\aa} \f{\AAt}{\AA}} - (I-\HH) e}.
\end{multline}

Now we may begin controlling these terms. Recall that what we needed to control was the $L^2$ norm of  \eqref{eq:321}. We can estimate this by
\begin{equation}
  \label{eq:71}
  \begin{aligned}
    \nm{-i(\bar{Z}_{tt} - i) D_\aa^2 \f{\AAt}{\AA} }_{L^2} &   \lec \nm{i\f{(\bar{Z}_{tt} - i)}{\abs{Z_{,\aa}}} \partial_\aa \paren{ \abs{D_\aa} \f{\AAt}{\AA}}}_{L^2} + \nm{e}_{L^2}
 \lec \nm{\eqref{eq:58}}_{L^2} + \nm{e}_{L^2}
\\ & \lec \nm{(I-\HH) \braces{i(\bar{Z}_{tt} - i) D_\aa^2 \f{\AAt}{\AA}}}_{L^2}
\\ & + \nm{\bracket{i \f{(\bar{Z}_{tt} - i)}{\abs{Z_{,\aa}}}\paren{\f{\abs{Z_{,\aa}}}{Z_{,\aa}}}^2,\HH}  \partial_\aa \paren{ \abs{D_\aa} \f{\AAt}{\AA}}}_{L^2} + \nm{e}_{L^2}.
\end{aligned}
\end{equation}
Thus, it suffices to focus on these three terms.  

First we check the error term, $e$ \eqref{eq:324}. We control
\begin{equation}
  \label{eq:325}
  \begin{aligned}
  \nm{e}_{L^2} &\le 
  \nm{(\bar{Z}_{tt} - i) \paren{\abs{D_\aa} \f{\abs{Z_{,\aa}}}{Z_{,\aa}}}}_{L^\infty}\nm{D_\aa \f{\AAt}{\AA}}_{L^2}\\&\lec \nm{(\bar{Z}_{tt} - i)\partial_\aa \f{1}{Z_{,\aa}}}_{L^\infty} \nm{D_\aa \f{\AAt}{\AA}}_{L^2},
  \end{aligned}
\end{equation}
where in the second step we used   $\abs{\partial_\aa \f{f}{\abs{f}}} \le \abs{\f{f'}{\abs{f}}}$ \eqref{eq:100}. We have controlled both factors on the RHS in \eqref{eq:1024} and \eqref{eq:1034}.

Now we estimate the second term on the RHS of \eqref{eq:71}. 
 We have, by $L^\infty \times L^2$ commutator estimate \eqref{eq:305},
    \begin{equation}
      \label{eq:337}
      \begin{aligned}
      \nm{\bracket{i\f{(\bar{Z}_{tt} - i) }{\abs{Z_{,\aa}}} \paren{\f{\abs{Z_{,\aa}}}{Z_{,\aa}}}^2,\HH}  \partial_\aa \abs{D_\aa} \f{\AAt}{\AA}}_{L^2}&\lec \nm{\partial_{\aa}\paren{\f{(\bar{Z}_{tt} - i) }{\abs{Z_{,\aa}}} \paren{\f{\abs{Z_{,\aa}}}{Z_{,\aa}}}^2}}_{L^\infty}\nm{D_\aa \f{\AAt}{\AA}}_{L^2}\\&\lec
    \paren{\nm{D_\aa Z_{tt}}_{L^\infty}+  \nm{(\bar{Z}_{tt} - i)\partial_\aa \f{1}{Z_{,\aa}}}_{L^\infty}} \nm{D_\aa \f{\AAt}{\AA}}_{L^2},
     \end{aligned}
    \end{equation}
   where in the second step we used  \eqref{eq:100}. We have controlled all factors on the RHS in \eqref{eq:17}, \eqref{eq:1024} and \eqref{eq:1034}.

We're left with the first, main term of the RHS of \eqref{eq:71}. Observe that by \eqref{eq:46}, our main equation $\bar Z_{ttt}+i\AA \bar Z_{t,\aa}=-i\AA_t\bar Z_{,\aa}$ \eqref{eq:178}, and the $L^2$ boundedness of $\HH$,
\begin{equation}
  \label{eq:341}
  \begin{aligned}
     \nm{(I-\HH) \braces{i(\bar{Z}_{tt} - i) D_\aa^2 \f{\AAt}{\AA}}}_{L^2}  & \lec \nm{(I-\HH) \paren{D_\aa^2 (\AA_t \bar{Z}_{,\aa})}}_{L^2}
 + \eqref{eq:316}\\&= \nm{(I-\HH) D_\aa^2(\bar{Z}_{ttt} + i \AA \bar{Z}_{t,\aa})}_{L^2}+ \eqref{eq:316}.
 \end{aligned}
\end{equation}
We have therefore reduced things to controlling  $ \nm{(I-\HH) D_\aa^2(\bar{Z}_{ttt} + i \AA \bar{Z}_{t,\aa})}_{L^2}$.

Observe that 
\begin{equation}\label{eq:2021}
D_\aa^2(\bar{Z}_{ttt} + i \AA \bar{Z}_{t,\aa})=\paren{(\partial_t+\frak b\partial_\aa)^2+i\AA\partial_\aa}D_\aa^2\bar Z_{t}+[D_\aa^2, (\partial_t+\frak b\partial_\aa)^2+i\AA\partial_\aa]\bar Z_t.
\end{equation}
We have controlled the second term on the RHS in \eqref{eq:415}. Applying   $(I-\HH)$ and using \eqref{lemma-2} on the first term then gives
\begin{equation}\label{eq:2022}
\nm{(I-\HH)D_\aa^2(\bar{Z}_{ttt} + i \AA \bar{Z}_{t,\aa})}_{L^2}\lec C(E).
\end{equation}

We can now conclude that
\begin{equation}\label{eq:2023}
\nm{D_\aa^2(\AA_t Z_\aa)}_{L^2}\lec C(E).
\end{equation}
and therefore
\begin{equation}\label{eq:1600}
\nm{G_{D^2_\a \bar z_t}}_{L^2(\frac{h_\a}{A_1\circ h})}\le \eqref{eq:415} +\eqref{eq:134a}\le \eqref{eq:415} +\eqref{eq:2023}\lec C(E).
\end{equation}

We have now shown that $\frac d{dt}E_a$ is bounded by a polynomial of $E$. This completes the proof of Theorem~\ref{maintheorem}. \qed

\section{A characterization of the energy}\label{sec:36e}
Our energy is expressed in terms of not only the free surface $Z$, the velocity $Z_t$, and their spatial derivatives, but also time derivatives of these quantities. In this section, we give a characterization of our energy in terms of the free surface $Z$, the velocity $Z_t$, and their spatial derivatives. 
In \S \ref{sec:13a}, we discuss which crest angles are allowed by a finite energy $E$.

\subsection{A characterization of the energy  in terms of position and velocity}\label{sec:36d}
  In this section, we translate the terms of our energy involving time derivatives into terms depending only on the free surface $Z$, the velocity $Z_t$, and their spatial derivatives. We do this using the basic equation \eqref{eq:251},
\eqref{eq:208}, \eqref{eq:246a} and the holomorphicity of $\bar Z_t$ and $\frac1{Z_{,\aa}}$. 
These basic  equations allow us to show that quantities involving $\bar{Z}_{tt}$ can be controlled by analogous quantities involving $\f{1}{Z_{,\aa}}$, along with various lower-order terms.\footnote{We remark that for these estimates we {\em do not} ever rely on (high order) $\dot{H}^{1/2}$ parts of the energies.}

The estimate we prove is
\begin{multline}
  \label{eq:532}
  E(t) \le C\bigg(\nm{\bar{Z}_{t,\aa}}_{L^2}, \nm{D_\aa^2 \bar{Z}_t}_{L^2}, \nm{\partial_\aa \f{1}{Z_{,\aa}}}_{L^2},
  \\ \nm{D_\aa^2 \f{1}{Z_{,\aa}}}_{L^2}, \nm{\f{1}{Z_{,\aa}} D_\aa^2 \bar{Z}_t}_{\dot{H}^{1/2}}, \nm{D_\aa \bar{Z}_t}_{\dot{H}^{1/2}}, \nm{\f{1}{Z_{,\aa}}}_{L^\infty}\bigg),
\end{multline}
where the constant depends polynomially on its terms. We remark that this inequality can be reversed: each of the factors on the RHS of \eqref{eq:532} is controlled by the energy. That is,
\begin{multline}
  \label{eq:575}
  \nm{\bar{Z}_{t,\aa}}_{L^2}, \nm{D_\aa^2 \bar{Z}_t}_{L^2}, \nm{\partial_\aa \f{1}{Z_{,\aa}}}_{L^2}, \nm{D_\aa^2 \f{1}{Z_{,\aa}}}_{L^2}, \nm{\f{1}{Z_{,\aa}} D_\aa^2 \bar{Z}_t}_{\dot{H}^{1/2}}, \\ \nm{D_\aa \bar{Z}_t}_{\dot{H}^{1/2}},   \nm{\f{1}{Z_{,\aa}}}_{L^\infty}   \lec C(E(t)).
\end{multline}
Therefore, these quantities fully characterize our energy. In the proof of our a priori estimate, we have shown \eqref{eq:575} for every term except $\nm{D_\aa^2 \f{1}{Z_{,\aa}}}_{L^2}$, which we never had a need to control. One can adapt the argument in \S \ref{sec:573} below to show that $\nm{D_\aa^2 \f{1}{Z_{,\aa}}}_{L^2}$ can be controlled by the energy.\footnote{To do this, it comes down once again to estimating $\nm{\f{1}{Z_{,\aa}} D_\aa^2 A_1}_{L^2}$, except this time we need to do this without the dependence on $\nm{D_\aa^2 \f{1}{Z_{,\aa}}}_{L^2}$. That dependence comes from estimate \eqref{eq:411}. (It also comes from using $\nm{D_\aa^2 \f{1}{Z_{,\aa}}}_{L^2}$ in the Sobolev inequality for $\nm{D_\aa \f{1}{Z_{,\aa}}}_{L^\infty}$; this is not a problem, since $\nm{D_\aa \f{1}{Z_{,\aa}}}_{L^\infty}$ is controlled by the energy.) To handle \eqref{eq:411}, we take advantage of the fact that $(I-\HH) \braces{\partial_\aa D_\aa \f{1}{Z_{,\aa}}} = 0$ (this is due to \eqref{eq:1026a} and the second principle in \S \ref{sec:20}) to rewrite the term in question as a commutator and then use commutator estimate \eqref{eq:228}:
  \begin{equation}
    \label{eq:574}
      \nm{(I-\HH) \braces{\f{A_1}{Z_{,\aa}} \partial_\aa D_\aa \f{1}{Z_{,\aa}}}}_{L^2}  = \nm{\bracket{\bar{Z}_{tt},\HH} \partial_\aa D_\aa \f{1}{Z_{,\aa}}}_{L^2}
  \lec \nm{\bar{Z}_{tt,\aa}}_{L^2} \nm{D_\aa \f{1}{Z_{,\aa}}}_{L^\infty},
  \end{equation}
both of which are controlled by the energy. }

We remark because both $\bar{Z}_t$ and $\f{1}{Z_{,\aa}}$ are the boundary values of periodic holomorphic functions, the weighted derivative $D_\aa$ corresponds to the complex derivative $\partial_z$, or the gradient of the corresponding quantities in the spatial domain $P^-$. We also note that ${Z_{,\aa}} = (\Phi^{-1})_z$ is a natural geometric quantity well-suited to this problem: it captures the geometry of the free surface directly through the Riemann mapping $\Phi^{-1}: P^-\to \Omega(t)$.

\subsubsection{The proof}
Throughout the following proof we will rely on the fact that $A_1 \ge 1$ \eqref{eq:394}, the estimate \eqref{eq:361}
\begin{equation}
  \label{eq:533}
  \nm{A_1}_{L^\infty} \lec 1 + \nm{\bar{Z}_{t,\aa}}_{L^2}^2,
\end{equation}
 the Sobolev estimate \eqref{eq:206}
\begin{equation}
  \label{eq:502}
  \nm{D_\aa \bar{Z}_t}_{L^\infty} \lec \nm{\bar{Z}_{t,\aa}}_{L^2} + \nm{D_\aa^2 \bar{Z}_t}_{L^2},
\end{equation}
and the estimate
\begin{equation}
  \label{eq:453}
  \nm{D_\aa \f{1}{Z_{,\aa}}}_{L^\infty} \lec \nm{D_\aa^2 \f{1}{Z_{,\aa}}}_{L^2} + \nm{\partial_\aa \f{1}{Z_{,\aa}}}_{L^2},
\end{equation}
which holds by Sobolev inequality \eqref{eq:638}.\footnote{Note that $\avg \paren{D_\aa \f{1}{Z_{,\aa}}}^2 = 0$ by the same argument that was used at \eqref{eq:48} to show $\avg (D_\aa \bar{Z}_t)^2 = 0$.}

We begin by noting that it suffices to control only the first terms of $E_a$ and $E_b$, since the remaining terms of the energy are (up to a factor of $A_1$) already on the RHS of \eqref{eq:532}.

For the first term of $E_a$, by the commutator identity \eqref{eq:121},
\begin{equation}
  \label{eq:24}
  \begin{aligned}
    \int \abs{\partial_t D_\a^2 \bar{z}_t}^2 \f{h_\a}{A_1 \circ h} d\a & \lec \int \abs{D_\a^2 \bar{z}_{tt}}^2 \f{h_\a}{A_1 \circ h} d\a + \int \abs{[\partial_t,D_\a^2] \bar{z}_t}^2 \f{h_\a}{A_1 \circ h} d\a
\\ & \lec \nm{D_\aa^2 \bar{Z}_{tt}}_{L^2}^2 + \int \abs{2(D_\a z_t)D_\a^2 \bar{z}_t + (D_\a^2 z_t) D_\a \bar{z}_t}^2 \f{h_\a}{A_1 \circ h} d\a
\\ & \lec \nm{D_\aa^2 \bar{Z}_{tt}}_{L^2}^2 + \nm{D_\a z_t}_{L^\infty}^2(\nm{D_\aa^2 \bar{Z}_{t}}_{L^2}^2 + \nm{D_\aa^2 Z_t}_{L^2}^2).
  \end{aligned}
\end{equation}
By \eqref{eq:65a} and \eqref{eq:100},
\begin{equation}
  \label{eq:64}
      \nm{D_\aa^2 Z_t}_{L^2} \lec \nm{D_\aa^2 \bar{Z}_t}_{L^2} + \nm{D_\aa \bar{Z}_t}_{L^\infty} \nm{\partial_\aa \f{1}{Z_{,\aa}}}_{L^2}.
\end{equation}
We conclude that
\begin{equation}
  \label{eq:65}
  \begin{aligned}
      \int \abs{\partial_t D_\a^2 \bar{z}_t}^2 \f{h_\a}{A_1 \circ h} d\a &\lec   C\paren{\nm{D_\aa^2 \bar{Z}_{tt}}_{L^2}, \nm{\bar{Z}_{t,\aa}}_{L^2}, \nm{D_\aa^2 \bar{Z}_t}_{L^2}, \nm{\partial_\aa \f{1}{Z_{,\aa}}}_{L^2}}.
\end{aligned}
\end{equation}

For the first term of $E_b$, we use the commutator identity \eqref{eq:120} to get
\begin{equation}
  \label{eq:412}
  \begin{aligned}
    \int &\abs{\partial_t D_\a \bar{z}_t}^2 \f{1}{\af} d\a  \lec \int \abs{D_\a \bar{z}_{tt}}^2 \f{1}{\af} d\a + \int \abs{[\partial_t,D_\a] \bar{z}_t}^2 \f{1}{\af} d\a
\\ & \lec \int \abs{D_\a \bar{z}_{tt}}^2 \f{(A_1 \circ h)}{\af} d\a + \int \abs{(D_\a z_t)D_\a \bar{z}_t}^2 \f{(A_1 \circ h)}{\af} d\a
\\ & \lec \nm{\bar{Z}_{tt,\aa}}_{L^2}^2 + \nm{D_\a z_t}_{L^\infty}^2 \nm{\bar{Z}_{t,\aa}}_{L^2}^2
 \le C(\nm{\bar{Z}_{tt,\aa}}_{L^2}, \nm{\bar{Z}_{t,\aa}}_{L^2}, \nm{D_\aa^2 \bar{Z}_t}_{L^2}).
  \end{aligned}
\end{equation}

All that remains to do from \eqref{eq:65} and \eqref{eq:412} is to estimate $\nm{\bar{Z}_{tt,\aa}}_{L^2}$ and $\nm{D_\aa^2 \bar{Z}_{tt}}_{L^2}$ in terms of $Z_t$ and $\f{1}{Z_{,\aa}}$, which we now do, in \S \ref{sec:534} and \S \ref{sec:573}.

\subsubsection{Controlling \texorpdfstring{$\nm{\bar{Z}_{tt,\aa}}_{L^2}$}{|| bar Ztt,a' || L2}}\label{sec:534}

Using \eqref{eq:251}, we estimate
\begin{equation}
  \label{eq:534}
    \nm{\partial_\aa \bar{Z}_{tt}}_{L^2}  \lec \nm{A_1}_{L^\infty} \nm{\partial_\aa \f{1}{Z_{,\aa}}}_{L^2} + \nm{D_\aa A_1}_{L^2}.
\end{equation}
To control $\nm{D_\aa A_1}_{L^2}$, we follow a similar procedure to what we did in \eqref{eq:254}-\eqref{eq:255}, except instead of using $\bar{Z}_{tt} - i$, we use $\f{1}{Z_{,\aa}}$ and estimate things in terms of $\f{1}{Z_{,\aa}}$. We get
\begin{equation}
  \label{eq:496}
    \nm{D_\aa A_1}_{L^2} \lec \nm{\bar{Z}_{t,\aa}}_{L^2}^2 \nm{\partial_\aa \f{1}{Z_{,\aa}}}_{L^2} + \nm{Z_{t,\aa}}_{L^2} \nm{D_\aa \bar{Z}_t}_{L^\infty}.
\end{equation}
Combining \eqref{eq:534} and \eqref{eq:496} we conclude that
\begin{equation}
  \label{eq:545}
  \begin{aligned}
    \nm{\partial_\aa \bar{Z}_{tt}}_{L^2} \le C\paren{\nm{\bar{Z}_{t,\aa}}_{L^2}, \nm{\partial_\aa \f{1}{Z_{,\aa}}}_{L^2}, \nm{D_\aa^2 \bar{Z}_t}_{L^2}}.
  \end{aligned}
\end{equation}

\subsubsection{Controlling \texorpdfstring{$\nm{D_\aa^2 \bar{Z}_{tt}}_{L^2}$}{|| Da' 2 bar Ztt || L2}}\label{sec:573}
From \eqref{eq:251}, we have
\begin{equation}\label{eq:1000}
iD_\aa^2 \bar{Z}_{tt}= \underbrace{A_1 D_\aa^2 \f{1}{Z_{,\aa}}+2(D_\aa A_1) D_\aa \f{1}{Z_{,\aa}}}_{e_1} + \f{1}{Z_{,\aa}} D_\aa^2 A_1.
\end{equation}
We estimate $\nm{D_\aa^2 \bar{Z}_{tt}}_{L^2}$ through  the following procedure. First we note that the only challenging term to control on the RHS of \eqref{eq:1000} is the last one, $\f{1}{Z_{,\aa}} D_\aa^2 A_1$. We observe that this is almost real, modulo factors of $\f{1}{Z_{,\aa}}$ and its derivatives. Therefore, we will be able to use the $\Re (I-\HH)$ trick and, through a series of commutators,  reduce the estimate for $\f{1}{Z_{,\aa}} D_\aa^2 A_1$ to an estimate 
of $(I-\HH) (D_\aa^2 \f{A_1}{Z_{,\aa}} )=(I-\HH)(iD_\aa^2 \bar{Z}_{tt})$. Since $\bar{Z}_t$ is holomorphic, we will be able to rewrite $(I-\HH)(iD_\aa^2 \bar{Z}_{tt})$ in terms of commutators and obtain desirable estimates. We now give the details.

We first estimate the error term $e_1$ in \eqref{eq:1000}:
\begin{equation}
  \label{eq:573}
  \begin{aligned}
    \nm{e_1}_{L^2} & \lec \nm{A_1}_{L^\infty} \nm{D_\aa^2 \f{1}{Z_{,\aa}}}_{L^2} + \nm{D_\aa A_1}_{L^2} \nm{D_\aa \f{1}{Z_{,\aa}}}_{L^\infty} 
\\ & \lec (1 + \nm{\bar{Z}_{t,\aa}}_{L^2}^2) \nm{D_\aa^2 \f{1}{Z_{,\aa}}}_{L^2} + \eqref{eq:496} \paren{\nm{D_\aa^2 \f{1}{Z_{,\aa}}}_{L^2} + \nm{\partial_\aa \f{1}{Z_{,\aa}}}_{L^2}}.
  \end{aligned}
\end{equation}

It remains to control $\nm{\f{1}{Z_{,\aa}} D_\aa^2 A_1}_{L^2}$. We want to use $(I-\HH)$ to turn our quantity into commutators, but to do so we need to factor $D_\aa$ into a real-weighted derivative $\abs{D_\aa} := \f{1}{\abs{Z_{,\aa}}} \partial_\aa$ so that we may invert $(I-\HH)$. From \eqref{eq:320}, we have
\begin{equation}
  \label{eq:168}
  \f{1}{Z_{,\aa}} D_\aa^2 A_1 = \f{1}{Z_{,\aa}} \paren{\f{\abs{Z_{,\aa}}}{Z_{,\aa}}}^2 \abs{D_\aa}^2 A_1 + \underbrace{\f{1}{Z_{,\aa}} \f{\abs{Z_{,\aa}}}{Z_{,\aa}}\paren{\abs{D_\aa} \f{\abs{Z_{,\aa}}}{Z_{,\aa}}} \abs{D_\aa} A_1}_{e_2}.
\end{equation}
We multiply both sides by $\paren{\f{Z_{,\aa}}{\abs{Z_{,\aa}}}}^3$ so that the first term on the RHS is purely real:
\begin{equation}
  \label{eq:274}
  \paren{\f{Z_{,\aa}}{\abs{Z_{,\aa}}}}^3\f{1}{Z_{,\aa}} D_\aa^2 A_1 = \paren{\f{Z_{,\aa}}{\abs{Z_{,\aa}}}}^3\f{1}{Z_{,\aa}} \paren{\f{\abs{Z_{,\aa}}}{Z_{,\aa}}}^2 \abs{D_\aa}^2 A_1 + \paren{\f{Z_{,\aa}}{\abs{Z_{,\aa}}}}^3 e_2.
\end{equation}
Now we apply $\Re(I-\HH)$ to each side, and conclude from the fact that $A_1 \in \R$ that
\begin{equation}
  \label{eq:369}
  \abs{\f{1}{Z_{,\aa}} \abs{D_\aa}^2 A_1} \lec \abs{(I-\HH) \braces{\paren{\f{Z_{,\aa}}{\abs{Z_{,\aa}}}}^3\f{1}{Z_{,\aa}} D_\aa^2 A_1}} + \abs{(I-\HH) \braces{\paren{\f{Z_{,\aa}}{\abs{Z_{,\aa}}}}^3 e_2}}.
\end{equation}
We conclude from \eqref{eq:168} and \eqref{eq:369} that
\begin{equation}
  \label{eq:372}
  \nm{\f{1}{Z_{,\aa}} D_\aa^2 A_1}_{L^2} \lec \nm{e_2}_{L^2} + \nm{(I-\HH) \braces{\paren{\f{Z_{,\aa}}{\abs{Z_{,\aa}}}}^3\f{1}{Z_{,\aa}} D_\aa^2 A_1}}_{L^2}.
\end{equation}
By \eqref{eq:100} and \eqref{eq:496} we estimate
\begin{equation}
  \label{eq:373}
  \nm{e_2}_{L^2} \lec \nm{D_\aa \f{1}{Z_{,\aa}}}_{L^\infty} \nm{ D_\aa A_1}_{L^2}\lec \eqref{eq:453}\eqref{eq:496}.
\end{equation}
It remains to estimate $\nm{(I-\HH) \braces{\paren{\f{Z_{,\aa}}{\abs{Z_{,\aa}}}}^3\f{1}{Z_{,\aa}} D_\aa^2 A_1}}_{L^2}$. To get the right commutator estimate, we first rewrite this as
\begin{equation}
  \label{eq:374}
  \begin{aligned}
      \nm{(I-\HH) \braces{\paren{\f{Z_{,\aa}}{\abs{Z_{,\aa}}}}^3\f{1}{Z_{,\aa}} D_\aa^2 A_1}}_{L^2} & \lec \nm{(I-\HH) \braces{\paren{\f{Z_{,\aa}}{\abs{Z_{,\aa}}}}^3\f{1}{Z_{,\aa}} \partial_\aa \paren{\f{1}{Z_{,\aa}} D_\aa A_1}}}_{L^2} 
\\ & + \nm{(I-\HH) \braces{\paren{\f{Z_{,\aa}}{\abs{Z_{,\aa}}}}^3\f{1}{Z_{,\aa}} \paren{\partial_\aa \f{1}{Z_{,\aa}}} D_\aa A_1}}_{L^2}.
    \end{aligned}
  \end{equation}
We estimate the second term on the RHS of \eqref{eq:374} directly:
\begin{equation}
  \label{eq:379}
  \begin{aligned}
    \nm{(I-\HH) \braces{\paren{\f{Z_{,\aa}}{\abs{Z_{,\aa}}}}^3\f{1}{Z_{,\aa}} \paren{\partial_\aa \f{1}{Z_{,\aa}}} D_\aa A_1}}_{L^2} & \lec \nm{D_\aa \f{1}{Z_{,\aa}}}_{L^\infty} \nm{ D_\aa A_1}_{L^2}
\\ & \lec \eqref{eq:453}\eqref{eq:496}.
\end{aligned}
\end{equation}
For the first term on the RHS of \eqref{eq:374}, we commute the  factor $\paren{\f{Z_{,\aa}}{\abs{Z_{,\aa}}}}^3$ outside the $(I-\HH)$, bringing along $\f{1}{Z_{,\aa}}$ to ensure that the commutator is controllable, and then bringing the $\f{1}{Z_{,\aa}}$ back inside:
\begin{equation}
  \label{eq:383}
  \begin{aligned}
    & \nm{(I-\HH) \braces{\paren{\f{Z_{,\aa}}{\abs{Z_{,\aa}}}}^3\f{1}{Z_{,\aa}} \partial_\aa \paren{\f{1}{Z_{,\aa}} D_\aa A_1}}}_{L^2}  
  \lec \nm{\bracket{\paren{\f{Z_{,\aa}}{\abs{Z_{,\aa}}}}^3\f{1}{Z_{,\aa}},\HH}\partial_\aa \paren{\f{1}{Z_{,\aa}} D_\aa A_1}}_{L^2} 
 \\ & + \nm{\bracket{\f{1}{Z_{,\aa}},\HH}\partial_\aa \paren{\f{1}{Z_{,\aa}} D_\aa A_1}}_{L^2} + \nm{(I-\HH)\braces{\f{1}{Z_{,\aa}} \partial_\aa \paren{\f{1}{Z_{,\aa}} D_\aa A_1}}}_{L^2}.
  \end{aligned}
\end{equation}
We estimate the first two terms on the RHS of \eqref{eq:383} using commutator estimate \eqref{eq:228}:  
\begin{equation}
  \label{eq:397}
  \begin{aligned}
    \nm{\bracket{\paren{\f{Z_{,\aa}}{\abs{Z_{,\aa}}}}^3\f{1}{Z_{,\aa}},\HH}\partial_\aa \paren{\f{1}{Z_{,\aa}} D_\aa A_1}}_{L^2} & + \nm{\bracket{\f{1}{Z_{,\aa}},\HH}\partial_\aa \paren{\f{1}{Z_{,\aa}} D_\aa A_1}}_{L^2} 
\\ & \lec \nm{\partial_\aa \f{1}{Z_{,\aa}}}_{L^2} \nm{\f{1}{Z_{,\aa}} D_\aa A_1}_{L^\infty}.
  \end{aligned}
\end{equation}
We will postpone estimating $ \nm{\f{1}{Z_{,\aa}} D_\aa A_1}_{L^\infty}$ until the end of this  series of calculations. For the moment, we take the last term from the RHS of \eqref{eq:383}:
\begin{equation}
  \label{eq:407}
  \begin{aligned}
    &\nm{(I-\HH)\braces{\f{1}{Z_{,\aa}} \partial_\aa \paren{\f{1}{Z_{,\aa}} D_\aa A_1}}}_{L^2}\\&  \lec \nm{(I-\HH)D_\aa^2\paren{\f{1}{Z_{,\aa}}A_1}}_{L^2}
 + \nm{(I-\HH) \braces{D_\aa \paren{A_1 D_\aa \f{1}{Z_{,\aa} } }}}_{L^2}.
  \end{aligned}
\end{equation}
We estimate the second term by
\begin{equation}
  \label{eq:411}
  \begin{aligned}
    \nm{(I-\HH) \braces{ D_\aa \paren{A_1 D_\aa \f{1}{Z_{,\aa} } }}  }_{L^2} & \lec \nm{D_\aa^2 \f{1}{Z_{,\aa}}}_{L^2} \nm{A_1}_{L^\infty} +\nm{D_\aa \f{1}{Z_{,\aa}}}_{L^\infty} \nm{ D_\aa A_1}_{L^2}
\\ & \lec \nm{D_\aa^2 \f{1}{Z_{,\aa}}}_{L^2} (1 + \nm{\bar{Z}_{t,\aa}}_{L^2}^2) +\eqref{eq:453}\eqref{eq:496}.  \end{aligned}
\end{equation}
Finally, for the first term on the RHS of \eqref{eq:407}, we use \eqref{eq:251} to replace   $\f{1}{Z_{,\aa}}A_1$ by  $i (\bar{Z}_{tt} - i)$, and apply $(I-\HH) D_\aa^2$ to this, and then use \eqref{lemma-0}: $(I-\HH)(\bar Z_{tt}-Z_tD_\aa \bar Z_t)=0$, \eqref{eq:1026a} and principles no.1 and no.2 of \S\ref{sec:20}. We get
\begin{equation}
  \label{eq:417}
  \begin{aligned}
    &\nm{(I-\HH)D_\aa^2\paren{\f{1}{Z_{,\aa}}A_1}}_{L^2}  = \nm{(I-\HH) D_\aa^2 (Z_t D_\aa \bar{Z}_t)}_{L^2}
\\ & \lec \nm{(I-\HH) \braces{(D_\aa^2 Z_t)D_\aa \bar{Z}_t}}_{L^2}  + \nm{(I-\HH) \braces{(D_\aa Z_t)(D_\aa^2 \bar{Z}_t)}}_{L^2} 
\\ & + \nm{(I-\HH) \braces{\f{Z_t}{Z_{,\aa}}  \partial_\aa  D_\aa^2 \bar{Z}_t}}_{L^2}.
\end{aligned}
\end{equation}
We estimate the first two terms directly by
\begin{equation}
  \label{eq:420}
  \begin{aligned}
      &\nm{(I-\HH) \braces{(D_\aa^2 Z_t)D_\aa \bar{Z}_t}}_{L^2}  + \nm{(I-\HH) \braces{(D_\aa Z_t)(D_\aa^2 \bar{Z}_t)}}_{L^2}  
\\ & \lec \nm{D_\aa \bar{Z}_t}_{L^\infty}(\nm{D_\aa^2 Z_t}_{L^2} + \nm{D_\aa^2 \bar{Z}_t}_{L^2})
 \lec \eqref{eq:502} (\nm{D_\aa^2 \bar{Z}_t}_{L^2} + \eqref{eq:64}).
\end{aligned}
\end{equation}
We are left with the last term on the RHS of \eqref{eq:417}. We first decompose $\f{Z_t}{Z_{,\aa}}$ into its holomorphic and antiholomorphic projections. The term with the holomorphic projection disappears by \eqref{eq:522}.  With what remains, we use \eqref{eq:474} to get a commutator, which we control by commutator estimate \eqref{eq:305}:
\begin{equation}
  \label{eq:429}
  \begin{aligned}
&\nm{(I-\HH) \braces{\f{Z_t}{Z_{,\aa}} \partial_\aa  D_\aa^2 \bar{Z}_t}}_{L^2}  =  \nm{(I-\HH) \braces{\paren{\P_A \f{Z_t}{Z_{,\aa}}} \partial_\aa D_\aa^2 \bar{Z}_t}}_{L^2}
\\ & = \nm{\bracket{\paren{\P_A \f{Z_t}{Z_{,\aa}}},\HH}\partial_\aa D_\aa^2 \bar{Z}_t}_{L^2}
 \lec \nm{\partial_\aa \P_A \f{Z_t}{Z_{,\aa}}}_{L^\infty} \nm{ D_\aa^2 \bar{Z}_t}_{L^2}
\\ & \lec \paren{\nm{D_\aa^2 \bar{Z}_t}_{L^2} + \nm{Z_{t,\aa}}_{L^2} \paren{1 +\nm{\partial_\aa \f{1}{Z_{,\aa}}}_{L^2}}}  \nm{D_\aa^2 \bar{Z}_t}_{L^2}
  \end{aligned}
\end{equation}
by \eqref{eq:513} and \eqref{eq:502}.

We now give the estimate for $\nm{\f{1}{Z_{,\aa}} D_\aa A_1}_{L^\infty}=\nm{\f{1}{\abs{Z_{,\aa}}^2} \partial_\aa A_1}_{L^\infty}$ in \eqref{eq:397}. We do so using \eqref{eq:254}. We have
\begin{equation}\label{eq:1001}
\begin{aligned}
 \frac1{\abs{Z_{,\aa}}^2} \partial_\aa A_1 & = \Im \f{1}{\hdenomconst} \int \f{\pi}{2} \f{(Z_t(\aa) - Z_t(\bb))}{\sin^2(\f{\pi}{2}(\aa - \bb))} \paren{\frac1{\abs{Z_{,\aa}}^2} -\frac1{\abs{Z_{,\bb}}^2}    } \bar{Z}_{t,\bb}(\bb) d\bb 
\\&+ \Im \f{1}{\hdenomconst} \int \f{\pi}{2} \f{(Z_t(\aa) - Z_t(\bb))}{\sin^2(\f{\pi}{2}(\aa - \bb))} \frac1{\bar Z_{,\bb}}D_\bb \bar{Z}_{t}(\bb) d\bb 
\\ & = I + II.
\end{aligned}
\end{equation}
From H\"older's inequality, Hardy's inequality \eqref{eq:77} and the mean value theorem,\footnote{Note that  $\f{1}{\abs{Z_{,\aa}}^2}$ is periodic.}  we have
\begin{equation}\label{eq:1002}
\nm{I}_{L^\infty}\lec \nm{Z_{t,\aa}}_{L^2}^2\nm{D_\aa\frac1{Z_{,\aa}}}_{L^\infty}\lec \nm{Z_{t,\aa}}_{L^2}^2\eqref{eq:453}.
\end{equation}
We rewrite $II$ using integration-by-parts identity \eqref{eq:98}:
\begin{equation}
\label{eq:1003}
\begin{aligned}
 \f{1}{\hdenomconst}& \int \f{\pi}{2} \f{(Z_t(\aa) - Z_t(\bb))}{\sin^2(\f{\pi}{2}(\aa - \bb))} \frac1{\bar Z_{,\bb}}D_\bb \bar{Z}_{t}(\bb) d\bb 
\\ & =-[Z_t, \HH]\partial_\aa \paren{\frac1{\bar Z_{,\aa}}D_\aa \bar{Z}_{t}}+ \HH \paren{Z_{t,\aa}\frac1{\bar Z_{,\aa}}D_\aa \bar{Z}_{t}}
\\&= -[Z_t, \HH]\partial_\aa \paren{\frac1{\bar Z_{,\aa}}D_\aa \bar{Z}_{t}}- \bracket{\f{1}{Z_{,\aa}}\bar{D_\aa \bar{Z}_{t} }, \HH}\bar{Z}_{t,\aa}+\abs{ D_\aa \bar{Z}_{t}}^2.
\end{aligned}
\end{equation}
Using \eqref{eq:32} on the first two terms on the RHS of \eqref{eq:1003}, we get
 \begin{equation}\label{eq:1004}
 \begin{aligned}
 \nm{II}_{L^\infty} & \lec \nm{Z_{t,\aa}}_{L^2}\nm{\partial_\aa \paren{\frac1{\bar Z_{,\aa}}D_\aa \bar{Z}_{t}}}_{L^2}+\nm{ D_\aa \bar{Z}_{t}}_{L^\infty}^2
\\ & \lec \nm{Z_{t,\aa}}_{L^2}\paren{ \nm {D^2_\aa \bar{Z}_{t}}_{L^2}+ \nm{\partial_\aa \frac1{Z_{,\aa}}}_{L^2}\nm{D_\aa \bar{Z}_{t}}_{L^\infty}}+\nm{ D_\aa \bar{Z}_{t}}_{L^\infty}^2.
\end{aligned}
\end{equation}
Combining \eqref{eq:1001}, \eqref{eq:1002}, \eqref{eq:1004}, \eqref{eq:453}, and \eqref{eq:502}, we have
\begin{equation}
\label{eq:1005}
\begin{aligned}
\nm{\frac1{\abs{Z_{,\aa}}^2} \partial_\aa A_1}_{L^\infty} & \le \nm{I}_{L^\infty}+\nm{II}_{L^\infty}
\\ & \le C\paren{ \nm{\bar{Z}_{t,\aa}}_{L^2}, \nm{D_\aa^2 \bar{Z}_t}_{L^2}, \nm{\partial_\aa \f{1}{Z_{,\aa}}}_{L^2}, \nm{D_\aa^2\f{1}{Z_{,\aa}}   }_{L^2}}.
\end{aligned}
\end{equation}
We now sum up these estimates. From \eqref{eq:372}, and using  \eqref{eq:1005} to estimate $\nm{\f{1}{Z_{,\aa}} D_\aa A_1}_{L^\infty}$, we have
\begin{equation}
  \label{eq:489}
  \begin{aligned}
    \nm{\f{1}{Z_{,\aa}}D_\aa^2 A_1}_{L^2} & \lec \eqref{eq:373} + \eqref{eq:379} + \eqref{eq:397} + \eqref{eq:411} + \eqref{eq:420} + \eqref{eq:429}
\\ & \lec 
 C\paren{\nm{D_\aa^2 \f{1}{Z_{,\aa}}}_{L^2},\nm{\bar{Z}_{t,\aa}}_{L^2},\nm{D_\aa^2 \bar{Z}_t}_{L^2},\nm{\partial_\aa \f{1}{Z_{,\aa}}}_{L^2}}.
  \end{aligned}
\end{equation}

Combining \eqref{eq:573} and \eqref{eq:489} we conclude that
\begin{equation}
  \label{eq:454}
  \nm{D_\aa^2 \bar{Z}_{tt}}_{L^2} \lec C\paren{\nm{D_\aa^2 \f{1}{Z_{,\aa}}}_{L^2},\nm{\bar{Z}_{t,\aa}}_{L^2},\nm{D_\aa^2 \bar{Z}_t}_{L^2},\nm{\partial_\aa \f{1}{Z_{,\aa}}}_{L^2}}.
\end{equation}

\subsection{Singularities and the angle of the crest}\label{sec:13a}
In \S \ref{sec:36d} at \eqref{eq:532} and \eqref{eq:575} we characterized our energy in terms of various $L^2$ and $\dot{H}^{1/2}$ norms of quantities in Riemann mapping variables (as well as a single quantity, $\f{1}{Z_{,\aa}}$, in $L^\infty$).  When there is a non-right angle $\ang$ at the corner, or when there is a singularity in the middle of the free surface, the Riemann mapping will have a singularity. In this section, we discuss what it suggests about the angle $\ang$ or the interior angle of an angled crest in the middle of the free surface, continuing the discussion from \S \ref{sec:27}.  

As in \S \ref{sec:27}, we will move the corner at the wall to $0$, and  phrase our discussion in terms of a singularity at the corner, but it applies more broadly to singularities in the middle of the free surface.\footnote{The angled crests in the middle of the free surface don't have to be symmetric.} We thus henceforth focus on the angle $\ang$ at the corner. 

We first observe that our energy is finite in the regime when the interface and velocity are smooth and the angle $\ang = \f{\pi}{2}$, so we can focus on the case where $\ang < \f{\pi}{2}$.\footnote{Recall from the discussion in \S \ref{sec:27}  that we cannot have $\ang > \f{\pi}{2}$ in our energy regime.}

If $\ang$ is the  angle the water wave makes with the wall, the Riemann mapping $\Phi(z)$ should behave like $z^r$ at the corner, where $r \ang = \f{\pi}{2}.$ For $\ang < \f{\pi}{2},$ we have $r > 1$. 

Recall from \eqref{eq:532} and \eqref{eq:575} that among the quantities that  characterize the energy, $\nm{\partial_\aa \f{1}{Z_{,\aa}}}_{L^2}$, $\nm{D_\aa^2 \f{1}{Z_{,\aa}}}_{L^2}$ and $\nm{\f{1}{Z_{,\aa}}}_{L^\infty}$  are the terms purely related to the surface. We will see through the following calculation what non-right angles $\ang$ are allowed if $\nm{\partial_\aa \f{1}{Z_{,\aa}}}_{L^2}$, $\nm{D_\aa^2 \f{1}{Z_{,\aa}}}_{L^2}$ and $\nm{\f{1}{Z_{,\aa}}}_{L^\infty}$ are finite.

Note that 
 $ Z(\aa) = \Phi\i(\aa) \approx (\aa)^{1/r}$
so $ Z_{,\aa} = \partial_\aa(\Phi\i) \approx (\aa)^{1/r - 1}$,
and
\begin{equation}
  \label{eq:672}
  \f{1}{Z_{,\aa}} \approx (\aa)^{1 - 1/r},  \quad
 \quad
\partial_\aa \f{1}{Z_{,\aa}} \approx (\aa)^{-1/r} \quad (r \not= 1).
\end{equation}
Therefore, assuming $r > 1$, $\partial_\aa \f{1}{Z_{,\aa}} \in L^2$ if and only if $r > 2$ if and only if $\ang < \f{\pi}{4}$.  Similarly, $D_\aa^2 \f{1}{Z_{,\aa}} \in L^2$ so long as $r > 2$.\footnote{We remark that, even though $E_a$ (which roughly includes $\nm{D_\aa^2 \f{1}{Z_{,\aa}}}_{L^2}$) is higher-order than $E_b$ (which roughly includes $\nm{\partial_\aa \f{1}{Z_{,\aa}}}_{L^2}$) in terms of the number of derivatives, the two energies are comparable in the sense that they allow precisely the same angles.}

We conclude from this discussion that our energy will be finite only when $\ang < \f{\pi}{4}$ (or when $\ang = \f{\pi}{2}$). This coincides precisely with the angles in the self-similar solutions of \cite{wu:self}.\footnote{We recall that our energy is finite for these solutions.}  For singularities in the middle of the free surface, this suggests that the interior angle  must be less than $\f{\pi}{2}$.\footnote{We note that our energy does not apply to Stokes waves of maximum height (interior angle $ = \f{2\pi}{3}$).}

Now let's consider the behavior of the angle $\ang$ over time. This angle is determined by $\Im \ln z_\a$ at the corner.  Therefore, the behavior of $D_\alpha z_{t}=\partial_t\paren{ \ln z_\alpha}$  
at the corner should determine how the angle changes.  Since $Z_{t,\aa} = \paren{D_\a z_{t}}\circ h\i Z_{,\aa} $ and $ Z_{,\aa} \approx (\aa)^{1/r - 1}$, we must have $D_\a z_{t} \to 0$ at the corner if $Z_{t,\aa} \in L^2$, as our energy assumes. 
This suggests that if initially $\ang<\frac\pi 4$,  the angle would not change while the energy remained finite. This holds true also for interior angles at the angled crests. 

\begin{appendices}

\section{Holomorphicity and mean}\label{sec:hmb}
\subsection{The Hilbert transform \texorpdfstring{$\HH$}{H}}\label{sec:18}
Recall that in \S \ref{sec:11} we introduced the Hilbert transform $\HH$ associated to the periodic domain $P^-$:
\begin{equation}
  \label{eq:440}
  \HH f(\aa) := \f{1}{2 i}  \int_I \cot(\f{\pi}{2}(\aa - \bb)) f(\bb) d\bb,\qquad\text{for }\aa \in [-1,1].
\end{equation}
We know from Proposition~\ref{prop:hilbe} that a function $g\in L^p$ is the boundary value of a periodic holomorphic function on $P^-$ if and only if  $(I-\HH) g=\avg_I g$, and that for any function $f\in L^p$, $(I+\HH)f$ is the boundary value of a periodic holomorphic function on $P^-$, with $(I-\HH)(I+\HH)f=\avg f$.  Recall also that we defined at \eqref{eq:1581} the holomorphic and antiholomorphic projection operators
\begin{equation}
  \label{eq:158}
  \P_A f := \f{(I-\HH)}{2} f;\qquad \P_H f := \f{(I + \HH)}{2} f.
\end{equation}
Here we gather some basic properties of the Hilbert transform $\HH$ that will be used in this paper: 

\begin{proposition}\label{prop:1}

a.  Let $1 < p < \infty$. Then there exists $C_p < \infty$ such that for all $f \in L^p$
  \begin{equation}
    \label{eq:441}
    \nm{\HH f}_{L^p} \le C_p \nm{f}_{L^p}.
  \end{equation}
  
 b.  Let $f \in L^p$ for some $p > 1$. Then
  \begin{equation} \label{eq:159}
     \HH^2 f  = f - \avg f;\qquad  \P_A \P_H f  =  \P_H \P_A f =\f{1}{4} \avg f.
\end{equation}

 c.  Let $f \in L^p$, $g \in L^q$, where $\f{1}{p} + \f{1}{q} = 1$, $1 < p, q < \infty$. Then
  \begin{equation}
    \label{eq:471}
    \int f(\HH g)   = - \int (\HH f) g; \qquad
  \int (\P_A f) g  = \int f (\P_H g).
\end{equation}

d. Let $f \in L^p, g \in L^q$ for $p>1$, $q>1$, $\f{1}{p} + \f{1}{q} < 1$.  Then
\begin{equation}
  \label{eq:160}
  \P_A \braces{(\P_H f) (\P_H g)}  = \f{1}{8} \paren{\avg f} \paren{\avg g}.
\end{equation}
\end{proposition}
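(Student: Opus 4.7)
The plan is to derive all four properties in turn, leaning on Proposition~\ref{prop:hilbe} (the characterization of boundary values of periodic holomorphic functions on $P^-$) whenever possible, rather than reproving things from scratch via Fourier analysis or the Cauchy integral formula.

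For part (a), the kernel $\cot(\tfrac{\pi}{2}(\aa-\bb))$ differs from the classical Hilbert kernel $\tfrac{2}{\pi(\aa-\bb)}$ by a bounded smooth function on $I\times I$ (the difference is the Taylor remainder of cotangent near its pole, extended periodically). First I would write $\HH f = \HH_0 f + K f$ where $\HH_0$ is the usual Hilbert transform on the line (restricted to $I$) and $K$ is a bounded integral operator with a continuous kernel on the compact set $I\times I$. The classical $L^p$ boundedness of $\HH_0$ for $1<p<\infty$ then yields \eqref{eq:441}, with $K$ being trivially $L^p$-bounded by Young. No obstacle here; this is completely standard and is done, e.g., in \cite{Journe}.

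For part (b), the plan is to apply Proposition~\ref{prop:hilbe}(b) to $\P_H f = \tfrac12 (I+\HH) f$, which is the boundary value of a periodic holomorphic function on $P^-$ tending to $\tfrac12 \avg f$ at infinity. Proposition~\ref{prop:hilbe}(a) then forces $(I-\HH)\P_H f = \tfrac12 \avg f$, i.e.\ $\P_A \P_H f = \tfrac14 \avg f$. Expanding gives $(I-\HH)(I+\HH) f = \avg f$, hence $\HH^2 f = f - \avg f$. The identity $\P_H \P_A f = \tfrac14 \avg f$ is symmetric: conjugation (or the observation that $\HH$ commutes with subtracting the mean, and $\avg \HH f = 0$ since the antisymmetric kernel integrates to zero in one variable) interchanges the two projections. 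For part (c), the first identity $\int f(\HH g) = -\int (\HH f) g$ is Fubini applied to the antisymmetric kernel $\tfrac{1}{2i}\cot(\tfrac{\pi}{2}(\aa-\bb))$, and one only has to justify swapping the order, which is standard given that $fg \in L^1$ and the principal value is controlled by $L^p$ boundedness (a). The adjoint property for $\P_A,\P_H$ follows immediately by writing $\P_A = \tfrac12(I-\HH)$ and $\P_H = \tfrac12(I+\HH)$.

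For part (d), the key observation is that $\P_H f$ and $\P_H g$ are boundary values of periodic holomorphic functions on $P^-$, say $F, G$, with $F \to \tfrac12 \avg f$ and $G \to \tfrac12 \avg g$ at infinity (by Proposition~\ref{prop:hilbe}(b)). The product $FG$ is then periodic holomorphic on $P^-$ with limit $\tfrac14(\avg f)(\avg g)$ at infinity, and its boundary value is $(\P_H f)(\P_H g)$. The condition $\tfrac1p + \tfrac1q < 1$ guarantees via H\"older that this product lies in $L^r$ for some $r>1$, so Proposition~\ref{prop:hilbe}(a) applies and gives $(I-\HH)\{(\P_H f)(\P_H g)\} = \tfrac14(\avg f)(\avg g)$, which is precisely \eqref{eq:160} after dividing by $2$. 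The only minor subtlety, which I would address explicitly, is verifying that the holomorphic extension of the product really is the product of the holomorphic extensions and that its limit at $y\to -\infty$ is the product of the two limits; this follows from uniqueness of holomorphic extension and from the fact that $F,G$ are bounded near infinity (by the Poisson integral representation), so dominated convergence gives the limit of the product.
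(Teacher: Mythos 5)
Your proposal is correct and, for the one part the paper actually proves---part~(d)---takes the same route: observe that $(\P_H f)(\P_H g)$ is the boundary value of a periodic holomorphic function on $P^-$, verify that the product lies in some $L^r$ with $r>1$ via H\"older, and then apply Proposition~\ref{prop:hilbe}(a). The paper computes the constant $c_0$ by combining parts~(b) and~(c) (adjointness plus $\P_A\P_H=\tfrac14\avg$), whereas you compute it as the product of the limits at $y\to-\infty$; these are equivalent because Proposition~\ref{prop:hilbe}(a) identifies $c_0$ with $\avg_I$. One small inaccuracy worth flagging in part~(a): the difference $\cot(\tfrac\pi2(\aa-\bb)) - \tfrac{2}{\pi(\aa-\bb)}$ is \emph{not} bounded on all of $I\times I$, since $\cot(\tfrac\pi2 t)$ also has poles at $t=\pm2$, which lie at the corners $(\aa,\bb)=(\pm1,\mp1)$. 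So the decomposition $\HH=\HH_0+K$ with a bounded-kernel $K$ does not hold as written; the cleanest fix is to identify $I$ with the circle of circumference $2$ and invoke M.~Riesz's theorem on the $L^p$-boundedness of the conjugate function directly (which is what the paper means by ``classical''). Parts~(b) and~(c) are fine; for~(b) it is even simpler to note that $\P_A\P_H=\P_H\P_A$ automatically since both are polynomials in $\HH$ and hence commute, with $\P_A\P_H=\tfrac14(I-\HH^2)=\tfrac14\avg$.
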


Parts a., b., and c.~are classical results. We know the product of periodic holomorphic functions is periodic holomorphic. Part d.~is an easy consequence of Proposition~\ref{prop:hilbe}, parts b.~and c., and the fact that $(\P_H f) (\P_H g)$ is the boundary value of a periodic holomorphic function on $P^-$.


We will also need the following proposition.
\begin{proposition}\label{prop:iden}
  Let $f \in L^\infty$, $g\in L^p$ for some $p> 1$.  Suppose $(I-\HH) f = \avg f$. Then
  \begin{equation}
    \label{eq:19}
    [f,\HH]g = \f12(I+\HH) \paren{[f,\HH]g} - \frac12\avg (fg) + \frac12\paren{\avg f} \paren{\avg g}.
  \end{equation}
\end{proposition}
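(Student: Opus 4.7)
The plan is to reduce the identity to computing $(I-\HH) u$ where $u := [f,\HH]g$, since rearranging $2u = (I+\HH)u + (I-\HH)u$ shows that if I can establish
\[ (I-\HH)\,u = (\avg f)(\avg g) - \avg(fg), \]
then dividing by $2$ gives exactly the claimed formula.

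First I would unpack the hypothesis. By Proposition \ref{prop:hilbe}(a), $(I-\HH)f = \avg f$ says $f$ is the boundary value of a periodic holomorphic function on $P^-$; equivalently, $\HH f = f - \avg f$, and $\P_A f = \tfrac12(I-\HH)f = \tfrac12 \avg f$ is constant. By Proposition \ref{prop:hilbe}(b), $\P_H g$ is also the boundary value of a periodic holomorphic function on $P^-$. The key observation is then that $f \cdot \P_H g$, being the boundary value of the product of two periodic holomorphic functions, is itself holomorphic, so Proposition \ref{prop:hilbe}(a) gives
\[ \HH(f \P_H g) = f \P_H g - \avg(f \P_H g). \]
This is the one nontrivial fact driving the proof.

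Next I would expand
\[ (I-\HH)\,u = f\HH g - \HH(fg) - \HH(f\HH g) + \HH^2(fg), \]
substitute $\HH^2(fg) = fg - \avg(fg)$ and $\HH g = 2\P_H g - g$, and then group so that $\HH(fg) + \HH(f\HH g) = \HH(f(g+\HH g)) = 2\HH(f\P_H g)$. Applying the displayed identity collapses the remaining $\HH$-terms against the $2f\P_H g$ coming from $f\HH g$, leaving
\[ (I-\HH)\,u = 2\avg(f\P_H g) - \avg(fg). \]
Finally, the adjoint property \eqref{eq:471} gives $\avg(f\P_H g) = \avg((\P_A f)\,g)$, and since $\P_A f = \tfrac12 \avg f$ is constant this simplifies to $\tfrac12(\avg f)(\avg g)$, yielding the target expression for $(I-\HH)u$.

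There is no substantive analytic obstacle here: the hypotheses $f \in L^\infty$ and $g \in L^p$ with $p > 1$ guarantee that $fg, f\HH g, f\P_H g \in L^p$ (via Proposition \ref{prop:1}(a)), so every application of $\HH$, of $\HH^2$, and of the adjoint identity is legitimate. If anything, the only step requiring care is recognizing the right regrouping $\HH(fg) + \HH(f\HH g) = 2\HH(f\P_H g)$ that allows the holomorphicity of $f \P_H g$ to be exploited; everything else is bookkeeping.
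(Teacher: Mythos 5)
Your proof is correct, and it reaches the same key quantity by a slightly different route. Both arguments boil down to computing $(I-\HH)\,u$ for $u:=[f,\HH]g$ and then writing $u=\tfrac12(I+\HH)u+\tfrac12(I-\HH)u$. The paper takes a more structural shortcut: it writes $u=f(I+\HH)g-(I+\HH)(fg)$, notes that both summands are boundary values of periodic holomorphic functions (the first as a product of holomorphics, the second as a $\P_H$-image), and therefore concludes immediately via Proposition~\ref{prop:hilbe} that $(I-\HH)u=\avg u$; the remaining work is one application of the adjoint identity \eqref{eq:471} to evaluate $\avg u$. You instead expand $(I-\HH)u$ into its four terms, apply $\HH^2=I-\avg$, regroup $\HH(fg)+\HH(f\HH g)=2\HH(f\,\P_H g)$, and then use holomorphicity only for the single product $f\,\P_H g$ before finishing with the adjoint property. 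The underlying inputs are identical (products of periodic holomorphic functions are periodic holomorphic; $\HH^* = -\HH$; $\P_A f$ is the constant $\tfrac12\avg f$), but the paper's observation that the \emph{entire} commutator $[f,\HH]g$ is holomorphic eliminates the term-by-term bookkeeping. Your version is a bit longer but perhaps more self-contained, since it never needs to recognize the commutator as a single holomorphic object. Either is a complete proof.
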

\begin{proof}
We begin by observing that
\begin{equation}
  \label{eq:634}
 \bracket{f,\HH} g 
     = f(I+\HH)g - (I+\HH) (fg).
  \end{equation}
  Because $(I-\HH) f = \avg f$, by Proposition~\ref{prop:hilbe}, $[f,\HH]g$ is the boundary value of a periodic holomorphic function on $P^-$.  Therefore,
\begin{equation}
  \label{eq:635}
  \begin{aligned}
    (I-\HH)([f,\HH]g) &=\avg [f,\HH]g  =   \avg f(I+\HH)g-\avg fg \\& =  \avg ((I-\HH) f) g-\avg( fg)    = \paren{\avg f} \paren{\avg g} - \avg (fg)
  \end{aligned}
\end{equation}
by \eqref{eq:471}.
\end{proof}

\subsection{Periodic holomorphic functions 
}\label{sec:20}

 In this section, we note which of the functions we are dealing with are the boundary values of periodic holomorphic functions---and which, further, have mean zero. 
 From  Proposition~\ref{prop:hilbe} we know that, to show that $(I-\HH)$ of various functions disappears, assume they are in $L^p$, $p\ge1$,  it suffices  to show that they are boundary values of periodic holomorphic functions and that their means are zero.
 
 We start with the following basic facts. Assume that the quantities involved are sufficiently smooth, and that the assumption \eqref{riemanntime} holds.

First, we have that the complex conjugate velocity is periodic holomorphic, and goes to zero as $y \to -\infty$ by \eqref{eq:620}, so
\begin{equation}
  \label{eq:571}
  (I-\HH) \bar{Z}_t = 0.
\end{equation}

Then we have three identities about the Riemann mapping. Recall that
\begin{equation}
  \label{eq:627}
  Z_{,\aa} = \partial_\aa \Phi\i(\aa,t);\qquad \f{1}{Z_{,\aa}} = \Phi_z \circ Z.
\end{equation}
Both of these are clearly periodic holomorphic. Therefore by \eqref{riemannmean} we have
\begin{equation}
  \label{eq:516}
  (I-\HH) \f{1}{Z_{,\aa}} = 1,\qquad (I-\HH) Z_{,\aa} = 1.
\end{equation}
The mean $\avg Z_{,\aa} = 1$ can also be checked directly by the fundamental theorem of calculus, since $Z(1,t) = 1, Z(-1,t) =-1$ for all time.

Finally, we have
\begin{equation}
  \label{eq:312a}
  (I-\HH) \braces{\Phi_t \circ Z} = 0
\end{equation}
by \eqref{riemanntime}. Here, we have $\Phi_t \circ \Phi\i$ is holomorphic because it is the limit of holomorphic functions, and we know that $\Phi_t$ is periodic by the periodicity of our domain $\Omega(t)$.

From these facts, we will be able to deduce everything else that we need in the derivation of the free surface equations, from the following principles:

1. Assume that $f\in C^0(S^1)$ with $\partial_\aa f\in L^p$, $p\ge 1$. If $(I-\HH)f = c$, then $(I-\HH) \partial_\aa f = 0$.

This is straightforward by taking derivatives. One may also use the  correspondence between periodic holomorphic functions on $P_-$ and holomorphic functions on the unit disc, and take derivative to conclude.

2.  Assume that $f\in L^p$, $g\in L^q$ and $fg\in L^r$ with $p,\ q,\ r\ge 1$. If $(I-\HH) f = 0$ and $(I-\HH) g = c$ then $(I-\HH) (fg) = 0$.

 This is because the product of  periodic holomorphic functions is periodic holomorphic. If one of the factors goes to $0$ as $y \to -\infty$, then the product goes to $0$  as $y\to-\infty$.
 
3.  Assume $G(z,t)$ is a periodic holomorphic function on $\Omega(t)$ going to zero as $y \to -\infty$, 
$G$ is continuous differentiable with respective to $t$, and $G_t\circ Z\in L^q$, $G\circ Z\in C^0(S^1)$, $\partial_\aa \paren{G\circ Z}\in L^p$, $q, p\ge 1$, so $(I-\HH) (G \circ Z) = 0$.
Then $(I-\HH) G_t\circ Z = 0$.\footnote{Note that this argument does not apply to $\Phi_t \circ Z$ itself, because $\Phi$ is not periodic.} 

It is clear that $G_t \circ Z$ is periodic holomorphic, since it is the limit of periodic holomorphic functions.  It remains to show that $\avg G_t \circ Z = 0$. Note that $\avg G \circ Z = 0$ for all time. Also, since $\Phi(\Phi^{-1}(\aa, t), t)=\a'$, we have $(\Phi_z\circ Z) \cdot(\Phi^{-1})_t + \Phi_t\circ Z=0$, and therefore
\begin{equation}\label{eq:588}
(\Phi^{-1})_t= (-Z_{,\aa})  \Phi_t\circ Z.
\end{equation}
Therefore, using \eqref{eq:588} and the fact $Z=\Phi^{-1}$, we have
  \begin{equation}
    \label{eq:57}
    \begin{aligned}
      0 & = \f{d}{dt} \avg G(Z(\aa,t),t) 
       = \avg G_t(Z(\aa,t),t) +\avg G_z \circ Z\, (\Phi^{-1})_t(\aa,t)
      \\ & = \avg G_t \circ Z - \avg (\partial_\aa (G \circ Z)) \Phi_t\circ \Phi^{-1}.
    \end{aligned}
  \end{equation}

The second integral is zero by principles no. 1 and no.2 above.

Let $F=\bar{\vec{v}}$ be the complex conjugate velocity. Note that $F_{t} \circ Z= \bar{Z}_{tt} - (D_\aa \bar{Z}_t) Z_t$ as shown in \S \ref{sec:28}. As immediate consequences of  the basic facts and the three principles above,  we have the following statements: 

\begin{equation}
  \label{eq:186a}
  (I-\HH) \braces{(Z_{,\aa})^j (F_t \circ Z)} = 0, \qquad\text{for } j=0,1
\end{equation}
\begin{equation}
  \label{eq:504a}
  (I-\HH) \braces{(Z_{,\aa})^j (F_{tt} \circ Z) }= 0, \qquad\text{for } j=0,1.
\end{equation}
\begin{equation}
  \label{eq:474a}
 (I-\HH) (\partial_\aa^j D^k_\aa \bar{Z}_t )= 0,\qquad \text{for }j, k=0, 1.
\end{equation}
\begin{equation}
    \label{eq:222a}
    (I-\HH) \partial_\aa  (\bar{Z}_{tt} - (D_\aa \bar{Z}_t) Z_t) = 0.
\end{equation}

\subsubsection{Some identities used in the proof of Theorem~\ref{maintheorem}}\label{sec:21a}
In this and the following subsection \S\ref{sec:21b},  we assume only the assumption in Theorem~\ref{maintheorem} and its consequence \eqref{assumption2} hold. 
From  \eqref{eq:1026} and \eqref{eq:1026a}:
$$(I-\HH)\bar Z_t=0,\qquad (I-\HH)\frac1{Z_{,\aa}}=1,$$
and the above principles no.1 and no.2, we have
\begin{equation}
  \label{eq:242}
  (I-\HH) \partial_\aa \f{1}{Z_{,\aa}} = 0
\end{equation}
\begin{equation}
  \label{eq:315}
  (I-\HH) D_\aa^k \bar{Z}_t = 0,\qquad \text{for } 0\le k\le 3
\end{equation}
\begin{equation}
  \label{eq:474}
 (I-\HH) \partial_\aa D^k_\aa \bar{Z}_t = 0,\qquad \text{for } 0\le k\le 2
\end{equation}
\begin{equation}
  \label{eq:456}
  (I-\HH) \braces{\f{1}{Z_{,\aa}} D_\aa^k \bar{Z}_t} = 0, \qquad \text{for } 0\le k \le 2
\end{equation}
\begin{equation}
  \label{eq:522}
  (I-\HH) \braces{(\partial_\aa^j D_\aa^2 \bar{Z}_t)\partial_\aa^l\paren{\paren{\P_H \f{Z_t}{Z_{,\aa}}}^k}} = 0,\quad\text{for }j=1,2, k=1,2, l=0,1.
\end{equation}
By \eqref{lemma-0},  $(I-\HH)(\bar Z_{tt}-Z_t D_\aa\bar Z_t)=0$. This then gives, by an application of principles no.1 and no.2 above, 
\begin{equation}
  \label{eq:313}
  (I-\HH) D_\aa^k  (\bar{Z}_{tt} - (D_\aa \bar{Z}_t) Z_t) = 0,\quad \text{for } 0\le k\le 3.
\end{equation}

\subsubsection{Mean conditions}\label{sec:21b}
We have implicitly in the preceding section shown that various quantities are mean-zero, but we don't use that fact other than in those identities. 
We will at one point require an explicit mean-zero condition, to use a variant of the Sobolev inequality.  This is that
\begin{equation}
  \label{eq:48}
  \avg (D_\aa \bar{Z}_t)^2 d\aa = 0.
\end{equation}
We note that 
because $(I-\HH) D_\aa \bar{Z}_t = 0$ \eqref{eq:315}, $D_\aa \bar{Z}_t$ is the boundary value of a periodic holomorphic function going to $0$ as $y \to -\infty$, so its square will also be the boundary value of a periodic holomorphic function going to $0$ as $y \to -\infty$.

\section{Useful inequalities and identities}\label{sec:tech}
We present here assorted inequalities and identities that we need in our paper. None of the results here are original, so we omit the proofs in most cases. We refer the reader to \cite{kinsey} for details of the proofs.

\subsection{Sobolev inequalities}\label{sec:sob}
We present here the one-dimensional Sobolev inequality we  use in our proof. 

\begin{proposition}[Weighted Sobolev Inequality with $\e$]\label{sobolev-with-weight}
  Let $\e > 0$. Then for all $f \in C^1(-1,1)\cap L^2(-1,1)$,
  \begin{equation}
    \label{eq:396}
    \nm{f}_{L^\infty} \lec \f{1}{\e} \nm{f}_{L^2(\f{1}{\omega})} + \e \nm{f'}_{L^2(\omega)} + \nm{f}_{L^2}
  \end{equation}
for any weight $\omega \ge 0$.

Furthermore
\begin{equation}
  \label{eq:638}
  \avg f^2 = 0 \To \nm{f}_{L^\infty} \lec \f{1}{\e} \nm{f}_{L^2(\f{1}{\omega})} + \e \nm{f'}_{L^2(\omega)}.
\end{equation}
\end{proposition}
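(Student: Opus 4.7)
The plan is to prove both inequalities by a one-line fundamental-theorem-of-calculus identity, combined with Cauchy--Schwarz against the weight and Young's inequality $2ab \le \e^{-2} a^2 + \e^2 b^2$. The only subtlety is that in our applications $f$ is complex-valued (e.g.~$f = D_\aa \bar Z_t$), so the hypothesis $\avg f^2 = 0$ in \eqref{eq:638} must be read as the (complex) integral of $f^2$, not of $|f|^2$; I will handle this carefully below.

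First I would write, for any $x,y \in I$,
\begin{equation*}
f(x)^2 - f(y)^2 = 2\int_y^x f(t) f'(t)\, dt,
\end{equation*}
so that by the triangle inequality and $|f(x)^2| = |f(x)|^2$,
\begin{equation*}
|f(x)|^2 \le |f(y)|^2 + 2\int_I |f(t)||f'(t)|\,dt.
\end{equation*}
By Cauchy--Schwarz splitting the integrand as $|f|/\sqrt{\omega}\cdot |f'|\sqrt{\omega}$ (with the convention $0/0 = 0$ on the set where $\omega$ vanishes, which contributes nothing since $|f'|\sqrt{\omega}$ vanishes there in the relevant $L^2(\omega)$ sense),
\begin{equation*}
2\int_I |f||f'|\,dt \le 2\|f\|_{L^2(1/\omega)} \|f'\|_{L^2(\omega)} \le \tfrac{1}{\e^2}\|f\|_{L^2(1/\omega)}^2 + \e^2 \|f'\|_{L^2(\omega)}^2
\end{equation*}
by Young's inequality.

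For \eqref{eq:396}, I would average the first displayed inequality over $y \in I$, obtaining
\begin{equation*}
|f(x)|^2 \le \tfrac{1}{2}\|f\|_{L^2}^2 + \tfrac{1}{\e^2}\|f\|_{L^2(1/\omega)}^2 + \e^2\|f'\|_{L^2(\omega)}^2,
\end{equation*}
take the supremum over $x$, and then take square roots using $\sqrt{a+b+c} \le \sqrt{a}+\sqrt{b}+\sqrt{c}$.

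For \eqref{eq:638}, I would instead work from the identity
\begin{equation*}
f(x)^2 = \avg_y f(y)^2 + 2\avg_y \int_y^x f(t) f'(t)\,dt,
\end{equation*}
where the first term on the right vanishes by hypothesis, so that $|f(x)|^2 \le 2\int_I |f||f'|\,dt$ directly, and then apply the same Cauchy--Schwarz/Young step and take square roots. The main (minor) obstacle is just being careful that the hypothesis $\avg f^2 = 0$ is interpreted as the vanishing of the complex-valued mean of $f^2$ (so it is a genuine cancellation condition in the complex case, not the trivial $f\equiv 0$ statement one would get for real $f$); everything else is routine.
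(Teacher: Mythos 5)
Your proof is correct. The paper omits the proof, noting only that it is ``fairly standard''; your FTC + Cauchy--Schwarz + Young argument is precisely that standard argument, and you correctly identify that $\avg f^2$ is the (complex-valued) mean of $f^2$ rather than of $|f|^2$, which is what makes \eqref{eq:638} a genuine cancellation condition for the holomorphic-boundary-value functions to which the paper applies it (see \eqref{eq:48}).
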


We omit the proof since it is fairly standard.

\subsection{Derivatives and complex-valued functions}\label{sec:comp}

Because our functions will be complex-valued, and we will often be looking at derivatives of angular and modular parts of these functions, we note here a few elementary facts about such functions.

 Let $f(\alpha) = r(\alpha) e^{i\th(\alpha)}$, where $r$, $\theta$ are real-valued functions. Then
\begin{equation}
  \label{eq:100}
 \abs{\partial_\a \abs{f}} \le \abs{f'};\qquad \quad
  \abs{\partial_\a \f{f}{\abs{f}}} \le \abs{\f{f'}{\abs{f}}}.
\end{equation}

From $-\frac{\partial P}{\partial\vec{n}}\ge 0$, we have $\af\ge 0$ and 
\begin{equation}
  \label{eq:279}
    \f{z_{tt} + i}{\abs{z_{tt} + i}} = i \f{z_\a}{\abs{z_\a}}.
\end{equation}
We will use this fact in this paper to replace the angular part of the spatial derivative $z_\a$ with the angular part of the time derivative $z_{tt} + i$.

\subsection{Hardy's inequality and commutator estimates }\label{sec:31}
We present here the basic estimates we will rely on for this paper.  Several of these estimates control quantities of the form $[f,\HH] g'$ by something involving $f'$ and $g$; they thus reduce the amount of regularity required on $g$, at the expense of further regularity on $f$.

For many of these estimates, we must pay close attention to the boundary conditions. Recall that $f \in C^0(S^1)$ implies that $\bound{f} = 0$.  Many of these estimates do not hold if this periodic boundary condition is removed. To save space, we have not always explicitly cited these boundary conditions when we quote these estimates, but they are always met, by our assumption of Theorem~\ref{maintheorem} and its consequence \eqref{assumption2}.

\begin{proposition}[Hardy's Inequality]
\label{hardy-inequality}  
Let $f \in C^0(S^1) \cap C^1(-1,1)$, 
with $f' \in L^2$. Then there exists $C > 0$ independent of $f$ such that for any $\aa \in I$,
\begin{equation}
  \label{eq:77}
\abs{\int_I \f{(f(\aa) - f(\bb))^2}{\sin^2(\f{\pi}{2}(\aa-\bb))} d\bb} \le C \nm{f'}_{L^2}^2.
\end{equation}
\end{proposition}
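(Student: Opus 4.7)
The plan is to use the periodicity of $f$ to change variables to the ``lag'' variable $u=\aa-\bb$, reduce to the Hardy-type inequality on a half-line where $u\mapsto 0$ is the only singular point of the kernel, and then invoke the classical one-dimensional Hardy inequality. First I would extend $f$ to a $2$-periodic function on all of $\R$; since $\bound{f}=0$ and $f'\in L^2(I)$, the extension satisfies $f'\in L^2_{\mathrm{loc}}(\R)$ with $\|f'\|_{L^2([a,a+2])}=\|f'\|_{L^2(I)}$ for every $a$. The integrand
\[
\frac{(f(\aa)-f(\bb))^2}{\sin^2\bigl(\tfrac{\pi}{2}(\aa-\bb)\bigr)}
\]
is therefore a $2$-periodic function of $\bb$, so shifting the $\bb$-integration by $\aa$ and setting $u=\aa-\bb$ yields
\[
\int_I \frac{(f(\aa)-f(\bb))^2}{\sin^2(\tfrac{\pi}{2}(\aa-\bb))}\,d\bb
\;=\;\int_{-1}^{1}\frac{(f(\aa)-f(\aa-u))^2}{\sin^2(\tfrac{\pi u}{2})}\,du.
\]
This is the key simplification: after picking the length-$2$ representative $u\in[-1,1]$, the kernel is singular only at $u=0$, not at $u=\pm 2$.

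Next, by symmetry it is enough to estimate the integral over $u\in[0,1]$; the case $u\in[-1,0]$ follows by the substitution $u\mapsto -u$ and the same argument applied to $t\mapsto f'(\aa+t)$. On $[0,1]$, the concavity of $\sin$ on $[0,\pi/2]$ gives $\sin(\tfrac{\pi u}{2})\ge u$, hence $\sin^2(\tfrac{\pi u}{2})\ge u^2$. Writing $f(\aa)-f(\aa-u)=\int_0^u f'(\aa-t)\,dt$ and setting $g(t):=f'(\aa-t)\,\mathbf{1}_{[0,1]}(t)$, we obtain
\[
\int_0^1\frac{(f(\aa)-f(\aa-u))^2}{\sin^2(\tfrac{\pi u}{2})}\,du
\;\le\;\int_0^{\infty}\frac{1}{u^2}\Bigl(\int_0^u g(t)\,dt\Bigr)^2\,du.
\]

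Finally, I apply the classical Hardy inequality on $(0,\infty)$, namely $\int_0^\infty u^{-2}\bigl(\int_0^u g\bigr)^2du\le 4\|g\|_{L^2(0,\infty)}^2$, to bound the right-hand side by $4\int_0^1|f'(\aa-t)|^2\,dt=4\|f'\|_{L^2([\aa-1,\aa])}^2$. Since $f'$ is $2$-periodic, the integral of $|f'|^2$ over any interval of length $1$ is bounded by $\|f'\|_{L^2(I)}^2$, so this piece is $\le 4\|f'\|_{L^2}^2$; combining with the symmetric contribution from $u\in[-1,0]$ yields $C=8$. There is no real obstacle here; the only point to be careful about is that the periodicity $\bound{f}=0$ is essential, because without it the change of variables to $u\in[-1,1]$ would introduce boundary contributions from the kernel singularity at $\aa-\bb=\pm 2$, which is exactly where periodicity forces $f(\aa)-f(\bb)$ to vanish.
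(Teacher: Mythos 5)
Your proof is correct. The paper does not actually give a proof of this proposition; in \S\ref{sec:31} the authors write that they omit the proofs of these classical results and refer to \cite{kinsey} for details, so there is nothing in the paper to compare against directly. Your argument is the standard one and is sound: the periodic change of variables $u=\aa-\bb$ moves the singularity to $u=0$ alone (the endpoint singularities at $\aa-\bb=\pm 2$ are neutralized precisely because $\bound{f}=0$ makes $f(\aa)-f(\bb)$ vanish there, as you observe), the concavity bound $\sin(\tfrac{\pi u}{2})\ge u$ on $[0,1]$ replaces the trigonometric kernel by a power weight, and the classical Hardy inequality on $(0,\infty)$ with constant $4$ finishes the estimate; doubling for the $u<0$ half gives $C=8$. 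The one point you pass over quickly but which is fine is the passage from $\int_0^1$ to $\int_0^\infty$ before invoking Hardy—this is a monotonicity step since the integrand is nonnegative—and the accounting that $\int_{\aa-1}^{\aa}|f'|^2\le\|f'\|_{L^2(I)}^2$ via the $2$-periodic extension of $f'$, which is justified by the remark in \S\ref{sec:5} on periodic extension of weak derivatives.
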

\begin{proposition}[$L^2 \times L^\infty$ Estimate]\label{prop-l2-linfty-hh}
 There exists a constant $C > 0$ such that for any $f \in C^0(S^1) \cap C^1(-1,1)$ with $f' \in L^2$, $g \in C^0[-1,1]$ with $g' \in L^p$ for some $p > 1$\footnote{
 We require this only to ensure that $[f,\HH] g'$ is well-defined.} (so $\bound{f} = 0$, though possibly $\bound{g} \not= 0$),
\begin{equation}
  \label{eq:228}
  \nm{[f,\HH]\partial_\aa g}_{L^2} \le C \nm{f'}_{L^2} \nm{g}_{L^\infty}.
\end{equation}
\end{proposition}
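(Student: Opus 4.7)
The plan is to integrate by parts in $\bb$ in the kernel representation
\[
[f,\HH]\partial_\aa g(\aa) \;=\; \frac{1}{2i}\pv\!\int_I \cot\!\bigl(\tfrac{\pi}{2}(\aa-\bb)\bigr)(f(\aa)-f(\bb))\,\partial_\bb g(\bb)\,d\bb,
\]
shifting the derivative off of $g$. Splitting at $\bb=\aa\pm\eps$ and letting $\eps\to 0$, the residuals at $\bb=\aa\pm\eps$ both converge to $\tfrac{2}{\pi}f'(\aa)g(\aa)$ with opposite signs and cancel, so no principal-value residual is created. The true boundary contributions at $\bb=\pm 1$ combine---using $f(1)=f(-1)$ (from $\bound{f}=0$) together with the $\pi$-periodicity of $\cot$, so that $\cot(\tfrac{\pi}{2}(\aa-1))=\cot(\tfrac{\pi}{2}(\aa+1))$---into the single expression $\tfrac{1}{2i}\cot(\tfrac{\pi}{2}(\aa-1))(f(\aa)-f(1))(g(1)-g(-1))$. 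The remaining interior piece decomposes as $\HH(f'g)(\aa)$ plus a principal-value integral involving $\tfrac{1}{\sin^2(\pi/2(\aa-\bb))}(f(\aa)-f(\bb))g(\bb)$.

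Two of the three resulting pieces are then immediate to estimate in $L^2$. The term $\HH(f'g)$ is controlled by the $L^2$-boundedness of $\HH$ (Proposition~\ref{prop:1}a) and H\"older: $\nm{\HH(f'g)}_{L^2}\lec\nm{f'g}_{L^2}\le\nm{f'}_{L^2}\nm{g}_{L^\infty}$. For the boundary term, note that $f(\aa)-f(1)$ vanishes at $\aa=1$ and, since $f(-1)=f(1)$, also at $\aa=-1$; coupled with the $|\aa\mp 1|^{-1}$ pole of $\cot(\tfrac{\pi}{2}(\aa-1))$ at each endpoint, Hardy's inequality (Proposition~\ref{hardy-inequality}) applied in one-sided form at each endpoint yields $\int_I|\cot(\tfrac{\pi}{2}(\aa-1))(f(\aa)-f(1))|^2\,d\aa\lec\nm{f'}_{L^2}^2$, so this contribution is bounded in $L^2$ by $2C\nm{f'}_{L^2}\nm{g}_{L^\infty}$.

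The main obstacle is the remaining principal-value integral, a first-order Calderon-type commutator for which the classical Coifman--McIntosh--Meyer bound requires $\nm{f'}_{L^\infty}$ rather than the weaker $\nm{f'}_{L^2}$ available here. I would handle it by writing $f(\aa)-f(\bb)=\int_\bb^\aa f'(s)\,ds$ and swapping the order of integration with an $\eps$-regularization that preserves the principal-value cancellation between the $\bb<\aa$ and $\bb>\aa$ halves; the individually divergent cutoff contributions at $|\aa-\bb|=\eps$ on each side pair with $f(\aa)-f(\aa\mp\eps)=O(\eps)$ to produce a finite limit. The resulting iterated integral is then estimated in $L^2_\aa$ by Cauchy--Schwarz in $s$ applied to $f'(s)$ against an inner integrated kernel, whose double $L^2$ norm in $(s,\aa)$ is bounded by $C\nm{g}_{L^\infty}$ once the mirrored singular contributions near $s=\aa$ cancel. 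An equivalent route is via duality, reducing the estimate to an $L^1$ bound $\nm{\partial_\bb[f,\HH]h}_{L^1}\le C\nm{f'}_{L^2}\nm{h}_{L^2}$ for $h\in L^2$, which after using formula \eqref{eq:269} brings us back to the same Calderon estimate, handled by the same regularized swap.
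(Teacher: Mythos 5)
Your decomposition of $[f,\HH]\partial_{\alpha'} g$ into $\HH(f'g)$, a boundary term, and a singular-kernel integral is exactly the paper's \eqref{eq:98}, and your handling of the first two pieces ($L^2$-boundedness of $\HH$ with H\"older, and Hardy's inequality for the boundary term after exploiting $\bound{f}=0$ and the $\pi$-periodicity of $\cot$) is correct and the same as the paper's.

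The gap is in your treatment of the third, main term
\[
\pv\!\int \f{f(\aa) - f(\bb)}{\sin^2(\f{\pi}{2}(\aa - \bb))} g(\bb)\, d\bb.
\]
The proposed regularized swap writes $f(\aa)-f(\bb)=\int_\bb^\aa f'(s)\,ds$ and aims to estimate by Cauchy--Schwarz in $s$ against the double $L^2(ds\,d\aa)$ norm of the integrated kernel $\tilde K(s,\aa)$. But $\tilde K(s,\aa)$ has a genuine $|\aa-s|^{-1}$ singularity near the diagonal: the leading behaviour as $s\to\aa$ is $\sim g(\aa)\cdot\tfrac{2/\pi}{\aa-s}$, and the ``mirrored'' cancellation you invoke only identifies this leading singularity as Hilbert-transform-like --- it does not remove it. Nor is the remainder $\tilde K(s,\aa)-g(\aa)\tfrac{2/\pi}{\aa-s}$ any better, since with $g$ merely $L^\infty$ (not continuous), the residual integral $\int \tfrac{g(\bb)-g(\aa)}{\sin^2(\cdots)}\,d\bb$ is still of order $|\aa-s|^{-1}$. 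So $\iint|\tilde K(s,\aa)|^2\,ds\,d\aa=\infty$, and Cauchy--Schwarz in $s$ cannot close. The duality alternative you mention is circular by your own admission.

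The estimate $\|\!\pv\int \tfrac{f(\aa)-f(\bb)}{\sin^2(\cdots)}g(\bb)d\bb\|_{L^2}\lec\|f'\|_{L^2}\|g\|_{L^\infty}$ is a first-order Calder\'on commutator bound with the ``wrong'' distribution of norms ($f'\in L^2$ and $g\in L^\infty$, rather than $f'\in L^\infty$ and $g\in L^2$ as in \eqref{eq:305}), and it does not reduce to the classical Calder\'on/Coifman--Meyer theorem by an elementary trick. The paper handles this term by the periodization identity $\paren{\f{\pi}{2}}^2\f{1}{\sin^2(\f{\pi}{2}\aa)}=\sum_{l\in\mathbb{Z}}\f{1}{(\aa+2l)^2}$ and then appeals to the corresponding real-line estimate (Proposition~3.2 of \cite{wu2009}), which is in turn a consequence of the $T(b)$ theorem. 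Some input of that depth is needed; your elementary substitution does not provide it.
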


\begin{proof}
This result is the periodic modification of a result from \cite{wu2009}, which in turn is a consequence of the $T(b)$ theorem \cite{david-journe-semmes}.

We begin by integrating by parts,
\begin{equation}
  \label{eq:98}
[f,\HH] \partial_\aa g = \HH (f'g) - \f{1}{2i} \int \f{\pi}{2}\f{f(\aa) - f(\bb)}{\sin^2(\f{\pi}{2}(\aa - \bb))} g(\bb) d\b + \bound{\f{1}{2i} (f(\aa) - f(\bb))\cot(\f{\pi}{2}(\aa - \bb)) g(\bb)},
\end{equation}
where we  have a boundary term because we didn't place periodic boundary assumption on $g$.
We control the first term by the $L^2$ boundedness of $\HH$ and H\"older. We control the last term by Hardy's inequality \eqref{eq:77}.

We handle the second term by using the identity
\begin{equation}\label{eq:286}
\paren{\f\pi 2}^2\f1{\sin^2(\f \pi 2 \aa)}=\sum_{l\in \mathbb Z}\f 1{(\aa+2l)^2}
\end{equation}
 and reducing it to  the real line version, Proposition 3.2 in \cite{wu2009}. For details, see \cite{kinsey}.
 
\end{proof}

\begin{proposition}[$L^2 \times L^\infty$ Estimate Variant] 
 There exists a constant $C > 0$ such that for any $f \in C^0(S^1) \cap C^1(-1,1)$ with $f' \in L^2$, $g\in L^\infty[-1,1]$ (and so  $\bound{f} = 0$ but possibly $\bound{g} \not=0$), 
  \begin{equation}
    \label{eq:258}
    \nm{ \int \f{(f(\aa) - f(\bb))}{\sin^2(\f{\pi}{2}(\aa - \bb))} g(\bb) d\bb}_{L^2} \le C \nm{f'}_{L^2} \nm{g}_{L^\infty}.
  \end{equation}
\end{proposition}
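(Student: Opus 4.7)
The plan is to adapt the treatment of the ``second term'' of \eqref{eq:98} in the proof of Proposition~\ref{prop-l2-linfty-hh}: that step already estimates exactly the operator appearing in \eqref{eq:258}, and it only uses boundedness of $g$, never continuity or boundary values. So I expect no new analytic input is required, only a verification that the earlier argument did not secretly invoke the stronger hypothesis $g \in C^0$.

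First I would use the series identity \eqref{eq:286},
\begin{equation*}
  \paren{\f{\pi}{2}}^2 \f{1}{\sin^2(\f{\pi}{2}(\aa-\bb))} \;=\; \sum_{l \in \Z} \f{1}{(\aa-\bb+2l)^2},
\end{equation*}
to turn the periodic kernel into a sum of real-line kernels. Extend $f$ to a continuous $2$-periodic function $\tilde f$ on $\R$ (legitimate because $\bound{f}=0$) and $g$ to a bounded $2$-periodic function $\tilde g$. Shifting $\bb \mapsto \bb - 2l$ in the $l$-th summand and using $2$-periodicity of both $\tilde f$ and $\tilde g$ assembles the sum into a single principal-value integral,
\begin{equation*}
   \int_I \f{f(\aa)-f(\bb)}{\sin^2\paren{\f{\pi}{2}(\aa-\bb)}} g(\bb)\, d\bb \;=\; \f{4}{\pi^2}\,\text{ pv}\!\int_{\R} \f{\tilde f(\aa)-\tilde f(\bb)}{(\aa-\bb)^2}\, \tilde g(\bb)\, d\bb,\qquad \aa \in I,
\end{equation*}
with absolute convergence away from the diagonal guaranteed by $\tilde f \in L^\infty$ and the $(\aa-\bb)^{-2}$ decay.

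Second, I would invoke the real-line Calder\'on-type commutator estimate used in the proof of Proposition~\ref{prop-l2-linfty-hh} (a version of Proposition~3.2 in \cite{wu2009}, a consequence of the $T(b)$ theorem): for $F \in C^1(\R)$ with $F' \in L^2(\R)$ and $G \in L^\infty(\R)$,
\begin{equation*}
   \nm{\text{ pv}\!\int_{\R} \f{F(\aa)-F(\bb)}{(\aa-\bb)^2} G(\bb)\, d\bb}_{L^2(I)} \;\le\; C\,\nm{F'}_{L^2(J)}\nm{G}_{L^\infty(\R)},
\end{equation*}
where $J \supset I$ is a sufficiently large bounded interval. Applied with $F = \tilde f$ and $G = \tilde g$, together with the $2$-periodicity bounds $\nm{\tilde f'}_{L^2(J)} \lec \nm{f'}_{L^2(I)}$ and $\nm{\tilde g}_{L^\infty(\R)} = \nm{g}_{L^\infty(I)}$, this delivers \eqref{eq:258}.

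The main obstacle, as in Proposition~\ref{prop-l2-linfty-hh}, is justifying the interchange of sum and integral and the passage to a single $\R$-integral when $\tilde g$ is only $L^\infty$, so that the kernel itself is not absolutely integrable. This is handled by the standard localization: split the $\R$-integral into a near-diagonal piece $|\bb-\aa|\le R$, which is exactly the object controlled by the cited real-line estimate, and a far-field piece, which, after undoing the shifts, becomes a sum of integrals weighted by $|l|^{-2}$ whose $L^2_\aa$-norm is estimated by Cauchy--Schwarz in $l$ followed by a Hardy-type bound \eqref{eq:77} applied to $\tilde f$. Since neither step touches any derivative of $\tilde g$, the final bound involves only $\nm{g}_{L^\infty}$, as required.
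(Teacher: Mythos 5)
Your approach coincides with the paper's: the paper dispatches \eqref{eq:258} by pointing to the estimate for the second term of \eqref{eq:98} in the proof of Proposition~\ref{prop-l2-linfty-hh}, which is exactly the periodization via \eqref{eq:286} and reduction to the real-line Calder\'on-type estimate of \cite{wu2009} that you carry out. Your intermediate ``local'' real-line statement is not itself a direct citation (as written you would be applying it with $F=\tilde f$ even though $\tilde f'\notin L^2(\R)$), but the near/far decomposition you then sketch, together with normalizing $\avg f=0$ so that $\nm{\tilde f}_{L^\infty}\lec\nm{f'}_{L^2}$, repairs this---and these are precisely the technicalities the paper defers to \cite{kinsey}.
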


This is just the second term on the RHS of \eqref{eq:98}.

\begin{proposition}[$L^\infty \times L^2$ Estimate \cite{calderon}]\label{prop-linfty-l2-hh}
 There exists a constant $C > 0$ such that for any $f \in C^1[-1,1] \cap C^0(S^1)$, $g \in C^0(S^1) \cap C^1(-1,1)$ with $g' \in L^p$ for some $p > 1$\footnote{We assume $g' \in L^p$ only to ensure $[f,\HH] g'$ is well-defined.}, 
\begin{equation}
  \label{eq:305}
  \nm{[f,\HH] \partial_\aa g}_{L^2} \le C \nm{f'}_{L^\infty} \nm{g}_{L^2}.
\end{equation}
\end{proposition}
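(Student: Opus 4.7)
The approach mirrors that used for Proposition~\ref{prop-l2-linfty-hh}. First I would integrate by parts in $\bb$ in the definition of $[f,\HH]\partial_\aa g$; because $\bound{f}=\bound{g}=0$, the boundary terms at $\bb=\pm 1$ are paired by the periodicity of $\cot(\tfrac{\pi}{2}(\aa-\bb))$ (of period $2$) and cancel, giving
\begin{equation*}
  [f,\HH]\partial_\aa g(\aa) \;=\; \HH(f'g)(\aa) \;-\; \frac{\pi}{4i}\int_I \frac{f(\aa)-f(\bb)}{\sin^2(\tfrac{\pi}{2}(\aa-\bb))}\,g(\bb)\,d\bb.
\end{equation*}
The first term is immediately controlled via the $L^p$-boundedness of $\HH$ and H\"older:
$\|\HH(f'g)\|_{L^2}\lec \|f'g\|_{L^2}\le \|f'\|_{L^\infty}\|g\|_{L^2}$. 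The work is in estimating the second, hypersingular, term.

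Next I would exploit the Mittag--Leffler identity \eqref{eq:286}, $(\pi/2)^2/\sin^2(\pi\aa/2)=\sum_{l\in\mathbb Z}(\aa+2l)^{-2}$, to decompose the hypersingular kernel as a sum of shifted real-line kernels. After substituting $\bb\mapsto \bb-2l$ in the $l$-th term and using the periodic extensions $\tilde f,\tilde g$ of $f,g$ to $\mathbb R$ (which exist and are continuous because $\bound{f}=\bound{g}=0$), the sum reassembles into one real-line integral
\begin{equation*}
  -\frac{\pi}{4i}\int_{\mathbb R}\frac{f(\aa)-\tilde f(\bb)}{(\aa-\bb)^2}\,\tilde g(\bb)\,d\bb,\qquad \aa\in I.
\end{equation*}
This is the kernel of the classical first Calder\'on commutator acting on $\tilde g$ at the point $\aa$.

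Since $\tilde g\notin L^2(\mathbb R)$, I cannot plug directly into Calder\'on's real-line theorem. I would therefore split $\mathbb R = \{|\aa-\bb|\le 3\}\cup\{|\aa-\bb|>3\}$. On the near region, the integrand only sees $\tilde g$ on a bounded window, and I would replace $\tilde g$ by $\tilde g\cdot \mathbf 1_{[-4,4]}$, whose $L^2(\mathbb R)$-norm is $\lec \|g\|_{L^2(I)}$; Calder\'on's theorem \cite{calderon} then yields the bound $\lec \|f'\|_{L^\infty}\|g\|_{L^2}$ on $L^2(I)$, using that $\|\tilde f'\|_{L^\infty(\mathbb R)}=\|f'\|_{L^\infty(I)}$ by periodicity. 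On the far region, periodicity of $\tilde f$ and the mean value theorem give $|f(\aa)-\tilde f(\bb)|\le 2\|f'\|_{L^\infty}$, so the kernel is bounded in absolute value by $2\|f'\|_{L^\infty}(\aa-\bb)^{-2}\mathbf 1_{|\aa-\bb|>3}$, a fixed $L^1(\mathbb R)$ convolver; Young's inequality against the periodic $\tilde g$ (whose $L^2$-norm over any period equals $\|g\|_{L^2}$) then gives the desired $L^2(I)$-bound.

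The main obstacle is the second step: the reduction to the real line must be carried out in a way that respects the $L^2$ norms on the circle, since a naive extension of $g$ to all of $\mathbb R$ is not in $L^2$. The near/far truncation above, together with the periodicity bound $|\tilde f(\aa)-\tilde f(\bb)|\lec \|f'\|_{L^\infty}$, is the point where one must be careful; once this is in place the conclusion follows from the classical Calder\'on commutator estimate on $\mathbb R$. A fully detailed implementation, following the same template as the proof of Proposition~\ref{prop-l2-linfty-hh}, is carried out in \cite{kinsey}.
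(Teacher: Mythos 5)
Your proposal follows essentially the same route as the paper: integrate by parts via \eqref{eq:98} (the boundary term vanishing because $\bound{f}=\bound{g}=0$), bound $\HH(f'g)$ by the $L^2$-boundedness of $\HH$ and H\"older, and handle the hypersingular remainder by the Mittag--Leffler identity \eqref{eq:286} to reduce to Calder\'on's real-line estimate. The paper itself only sketches the real-line reduction and refers to \cite{kinsey} for details; your near/far truncation is a reasonable way to implement it and is consistent with the paper's argument.
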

To prove Proposition~\ref{prop-linfty-l2-hh}, we begin with the integration-by-part formula \eqref{eq:98}, noting that the third term, the boundary term, is zero since $\bound{f}=\bound{g}=0$. We handle the first term by the $L^2$ boundedness of $\HH$ and H\"older, and the second term by identity \eqref{eq:286}, reducing it to 
  the classical result on $\R$ by \cite{calderon}.\footnote{The result was later extended by \cite{coifman-meyer}.}

\begin{proposition}\label{l2-half-deriv-smoothing}
 There exists a constant $C > 0$ such that for any $f, g \in C^1(-1,1) \cap C^0(S^1)$ with $f' \in L^2$ and $g' \in L^p$ for some $p>1$, 
\begin{equation}
  \label{eq:21}
  \nm{[f,\HH] \partial_\aa g}_{L^2} \le C \nm{f'}_{L^2} \nm{g}_{\dot{H}^{1/2}}.
\end{equation}
\end{proposition}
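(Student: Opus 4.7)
The plan is to reduce the periodic estimate to its real-line counterpart, following the same general strategy used to establish Proposition~\ref{prop-l2-linfty-hh}. The preparatory step is a symmetrization identity. Starting from
\[
[f,\HH]g'(\aa) = \frac{1}{2i}\pv\int \cot(\tfrac{\pi}{2}(\aa-\bb))(f(\aa)-f(\bb))g'(\bb)\,d\bb,
\]
I would integrate by parts in $\bb$; the boundary terms vanish because $\bound{f}=\bound{g}=0$ and $\cot(\tfrac{\pi}{2}(\aa\pm 1))$ takes equal values at the endpoints. Rewriting $g(\bb) = (g(\bb)-g(\aa))+g(\aa)$ and using that $\int \tfrac{(\pi/2)(f(\aa)-f(\bb))}{\sin^2(\frac{\pi}{2}(\aa-\bb))}\,d\bb = 2i\,\HH f'(\aa)$ (a second integration by parts), I arrive at
\[
[f,\HH]g'(\aa) + [g,\HH]f'(\aa) = \frac{1}{2i}\int \frac{(\pi/2)(f(\aa)-f(\bb))(g(\aa)-g(\bb))}{\sin^2(\frac{\pi}{2}(\aa-\bb))}\,d\bb.
\]
The right-hand side is easy to handle: Cauchy-Schwarz in $\bb$ followed by Hardy's inequality \eqref{eq:77} for the $f$-factor and the pointwise $\dot{H}^{1/2}$ identity implicit in \eqref{eq:23} for the $g$-factor gives an $L^2_\aa$ bound of order $\|f'\|_{L^2}\|g\|_{\dot{H}^{1/2}}$.

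This would reduce the problem to controlling $\|[g,\HH]f'\|_{L^2}$ by $\|f'\|_{L^2}\|g\|_{\dot{H}^{1/2}}$, which is the same estimate with the kernel factors swapped. This is where the main obstacle lies: neither the $L^2\times L^\infty$ estimate \eqref{eq:228} nor the $L^\infty\times L^2$ estimate \eqref{eq:305} is applicable, because $\dot{H}^{1/2}$ on $[-1,1]$ fails to embed in $L^\infty$, and a naive bootstrap via the symmetrization is circular. To close the estimate I would invoke the partial-fraction identity \eqref{eq:286} together with its cotangent analog, $\cot(\tfrac{\pi}{2}x)=\tfrac{2}{\pi}\sum_{l\in\Z}(x+2l)^{-1}$ (in principal-value sense), to express the periodic kernel as a sum of shifted real-line Hilbert-transform kernels. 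This reduces the bound for $\|[g,\HH]f'\|_{L^2}$ to the classical real-line Calderón commutator estimate
\[
\nm{[g,H]\partial_x f}_{L^2(\R)} \lec \nm{g}_{\dot{H}^{1/2}(\R)}\nm{f'}_{L^2(\R)},
\]
which is a consequence of the $T(1)$ theorem and the Coifman-McIntosh-Meyer theorem. The far-away translates in the partial-fraction sum produce tail kernels that decay away from the diagonal and are harmless by a direct Schur/Cauchy-Schwarz estimate, paralleling the last step of the proof of \eqref{eq:228}. The delicate piece is isolating the principal real-line contribution from the tails cleanly enough that the classical estimate applies; this is what I expect to require the most care.
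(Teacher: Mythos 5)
Your symmetrization identity is correct, and it is in fact exactly the decomposition the paper uses (cf.~\eqref{eq:13}, which rewrites $\partial_\bb g = \partial_\bb(g(\bb)-g(\aa))$ and integrates by parts): the first resulting term is $-[g,\HH]f'$, and the second is the higher-order Calder\'on kernel $\frac{\pi}{4i}\int \frac{(f(\aa)-f(\bb))(g(\aa)-g(\bb))}{\sin^2(\frac{\pi}{2}(\aa-\bb))}\,d\bb$, which you bound correctly by Cauchy--Schwarz together with Hardy's inequality \eqref{eq:77} and the $\dot H^{1/2}$ seminorm. So up to that point you are on the paper's track.

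The gap is in your assessment of the remaining piece $[g,\HH]f'$. You assert that ``neither \eqref{eq:228} nor \eqref{eq:305} is applicable'' and then reach for the $T(1)$ theorem via a real-line reduction — but you do not need either of those commutator estimates, and you certainly do not need anything as heavy as $T(1)$ or Coifman--McIntosh--Meyer. The term $[g,\HH]f'$ is controlled by exactly the same Cauchy--Schwarz computation you already applied to the symmetrized kernel: write
\begin{equation*}
  \abs{[g,\HH]f'(\aa)} \lesssim \paren{\int \abs{g(\aa)-g(\bb)}^2 \cot^2\!\left(\tfrac{\pi}{2}(\aa-\bb)\right) d\bb}^{1/2}\nm{f'}_{L^2},
\end{equation*}
then take $L^2_{\aa}$, use $\abs{\cot}\le 1/\abs{\sin}$ and Fubini to obtain $\nm{[g,\HH]f'}_{L^2}\lec \nm{g}_{\dot H^{1/2}}\nm{f'}_{L^2}$. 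This is precisely the paper's estimate \eqref{eq:96} (applied with the roles of $f$ and $g$ swapped), and its one-line proof is ``immediate by Cauchy--Schwarz and the boundedness of cosine.'' Moreover, the ``classical'' real-line bound $\nm{[g,H]\partial_x f}_{L^2(\R)}\lec\nm{g}_{\dot H^{1/2}(\R)}\nm{f'}_{L^2(\R)}$ that you propose to invoke as a black box is itself nothing but the estimate we are currently proving (on $\R$ instead of $S^1$) and is established by the very same elementary argument; citing it as a consequence of $T(1)$ is overkill and, logically, close to question-begging. So the fix is short: replace your real-line reduction with a direct appeal to \eqref{eq:96} (or re-derive it in one line), and your proposal becomes essentially the paper's proof.
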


\begin{proof}
  We integrate by parts, first rewriting $\partial_\bb g(\bb) = \partial_\bb (g(\bb) - g(\aa))$:
  \begin{equation}
    \label{eq:13}
    [f,\HH] \partial_\aa g = \f{1}{\hdenomconst} \int f_\bb(\bb)  \cot(\f{\pi}{2}(\aa - \bb))  (g(\bb) - g(\aa)) d\bb - \f{1}{\hdenomconst} \int \f{\pi}{2}\f{f(\aa) - f(\bb)}{\sin^2(\f{\pi}{2}(\aa -\bb))} (g(\bb) - g(\aa)) d\bb,
  \end{equation}
where there is no boundary term because of the periodic boundary conditions.

For the first term, we apply Cauchy-Schwarz inequality:
\begin{equation}
  \label{eq:14}
  \abs{\int f'(\bb) \cot(\f{\pi}{2}(\aa - \bb)) (g(\bb) - g(\aa)) d\bb} \le \nm{f'}_{L^2} \paren{\int \abs{g(\aa) - g(\bb)}^2 \abs{\cot^2(\f{\pi}{2}(\aa - \bb))} d\bb}^{1/2}.
\end{equation}
Taking $L^2$ of this in $\aa$ and using the boundedness of cosine to replace $\cot^2$ with $\f{1}{\sin^2}$, we get the desired estimate.

For the second term, we use Cauchy-Schwarz inequality:
\begin{equation}
  \label{eq:22}
  \nm{ \int \f{f(\aa) - f(\bb)}{\sin^2(\f{\pi}{2}(\aa - \bb))} (g(\bb) - g(\aa)) d\bb}_{L^2_\aa} \le \paren{\int \int \f{\abs{f(\aa) - f(\bb)}^2}{\sin^2(\f{\pi}{2}(\aa - \bb))} d\bb \int \f{\abs{g(\aa) - g(\bb)}^2}{\sin^2(\f{\pi}{2}(\aa - \bb))} d\bb d\aa}^{1/2},
\end{equation}
and then we use Hardy's inequality \eqref{eq:77} to $f$ to get our inequality \eqref{eq:21}.
\end{proof}

\begin{proposition}
  There exists a constant $C > 0$ such that for any  $f \in \dot{H}^{1/2},  g \in L^2$, 
\begin{equation}
  \label{eq:96}
  \nm{[f,\HH]g}_{L^2} \le C \nm{f}_{\dot{H}^{1/2}} \nm{g}_{L^2}.
\end{equation}
\end{proposition}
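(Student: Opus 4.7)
The plan is to prove this by a direct Schur-type Cauchy–Schwarz argument on the integral kernel of $[f,\HH]$, which is the classical route to this half-derivative commutator bound. No deep structural facts are needed beyond the defining integral of $\HH$ and the definition of $\dot{H}^{1/2}$ given in \eqref{eq:23}.

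First I would expand the commutator explicitly: by definition of $\HH$,
\begin{equation*}
[f,\HH]g(\aa) = \f{1}{\hdenomconst}\int_I \bigl(f(\aa)-f(\bb)\bigr)\cot\!\bigl(\tfrac{\pi}{2}(\aa-\bb)\bigr)\,g(\bb)\,d\bb.
\end{equation*}
I would then write $\cot = \cos/\sin$ and use the trivial bound $|\cos|\le 1$ to estimate pointwise
\begin{equation*}
\bigl|[f,\HH]g(\aa)\bigr| \le \f{1}{2}\int_I \f{|f(\aa)-f(\bb)|}{\bigl|\sin(\tfrac{\pi}{2}(\aa-\bb))\bigr|}\,|g(\bb)|\,d\bb.
\end{equation*}

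Next, I apply Cauchy–Schwarz in the $\bb$ variable, separating the kernel from $g$:
\begin{equation*}
\bigl|[f,\HH]g(\aa)\bigr|^2 \le \f{1}{4}\paren{\int_I \f{|f(\aa)-f(\bb)|^2}{\sin^2(\tfrac{\pi}{2}(\aa-\bb))}\,d\bb}\,\nm{g}_{L^2}^2.
\end{equation*}
Integrating over $\aa\in I$ produces the double integral appearing in the definition \eqref{eq:23} of $\nm{f}_{\dot{H}^{1/2}}^2$. Indeed,
\begin{equation*}
\nm{[f,\HH]g}_{L^2}^2 \le \f{1}{4}\,\nm{g}_{L^2}^2 \iint_{I\times I}\f{|f(\aa)-f(\bb)|^2}{\sin^2(\tfrac{\pi}{2}(\aa-\bb))}\,d\aa\,d\bb = \f{2}{\pi}\,\nm{f}_{\dot{H}^{1/2}}^2\,\nm{g}_{L^2}^2,
\end{equation*}
which gives \eqref{eq:96} with $C=\sqrt{2/\pi}$.

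There is essentially no obstacle here: the argument is one application of Cauchy–Schwarz, and the condition $\bound{f}=0$ noted in the hypothesis is implicit in the finiteness of $\nm{f}_{\dot{H}^{1/2}}$ (the factor $\sin^2(\tfrac{\pi}{2}(\aa-\bb))$ vanishes not only on the diagonal $\aa=\bb$ but also at $|\aa-\bb|=2$, so integrability of the $\dot H^{1/2}$ seminorm kernel forces $f(1)=f(-1)$). The only mild subtlety is that one must keep the $\cos$ factor separated from the $\sin$ factor so that Cauchy–Schwarz produces the $\sin^{-2}$ kernel that exactly matches the $\dot{H}^{1/2}$ norm; absorbing $\cos$ into $|\cot|$ via $|\cos|\le 1$ is the cleanest way to do this.
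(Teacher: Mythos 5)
Your proof is correct and takes the same approach as the paper's, which states tersely that the result ``is immediate by Cauchy-Schwarz and the boundedness of cosine.'' You have simply written out the two steps that the paper leaves implicit, and your constant $C=\sqrt{2/\pi}$ is consistent with the normalization in \eqref{eq:23}.
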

\begin{proof}
  This is immediate by Cauchy-Schwarz inequality and the boundedness of cosine.
\end{proof}
\begin{proposition}
 There exists a constant $C > 0$ such that for any $f, g \in C^1(-1,1) \cap C^0(S^1)$ with $f', g' \in L^2$ and $h \in L^2$, 
  \begin{equation}
    \label{eq:113}
    \nm{[f,g;h]}_{L^2} := \nm{\frac\pi{4i}\int \f{f(\aa) - f(\bb)}{\sin(\f{\pi}{2}(\aa - \bb))} \f{g(\aa) - g(\bb)}{\sin(\f{\pi}{2}(\aa - \bb))} h(\bb) d\bb}_{L^2} \le C \nm{f'}_{L^2} \nm{g'}_{L^2} \nm{h}_{L^2}.
  \end{equation}
\end{proposition}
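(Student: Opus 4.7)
\textbf{Proof proposal for \eqref{eq:113}.} The plan is to exploit the fact that the kernel factors as a product of two divided differences, each of which can be controlled by Hardy's inequality \eqref{eq:77}. The strategy is Cauchy--Schwarz in $\bb$, followed by Fubini, followed by Hardy twice. No delicate cancellation or $T(1)$-theorem machinery is needed; the trilinear estimate is essentially a Hardy-squared bound.

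First, I would apply Cauchy--Schwarz pointwise in $\aa$ to the integral defining $[f,g;h](\aa)$, splitting the two factors $\f{f(\aa)-f(\bb)}{\sin(\f{\pi}{2}(\aa-\bb))}$ and $\f{g(\aa)-g(\bb)}{\sin(\f{\pi}{2}(\aa-\bb))} h(\bb)$:
\begin{equation*}
  \abs{[f,g;h](\aa)}^2 \lec \paren{\int_I \f{\abs{f(\aa)-f(\bb)}^2}{\sin^2(\f{\pi}{2}(\aa-\bb))} d\bb} \paren{\int_I \f{\abs{g(\aa)-g(\bb)}^2}{\sin^2(\f{\pi}{2}(\aa-\bb))} \abs{h(\bb)}^2 d\bb}.
\end{equation*}
Since $f \in C^0(S^1) \cap C^1(-1,1)$ with $f' \in L^2$, Hardy's inequality \eqref{eq:77} gives $\int_I \f{\abs{f(\aa)-f(\bb)}^2}{\sin^2(\f{\pi}{2}(\aa-\bb))} d\bb \lec \nm{f'}_{L^2}^2$ uniformly in $\aa$, so the first factor is absorbed into a universal constant times $\nm{f'}_{L^2}^2$.

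Next I would integrate the remaining inequality in $\aa$ and swap the order of integration by Fubini:
\begin{equation*}
  \nm{[f,g;h]}_{L^2}^2 \lec \nm{f'}_{L^2}^2 \int_I \abs{h(\bb)}^2 \paren{\int_I \f{\abs{g(\aa)-g(\bb)}^2}{\sin^2(\f{\pi}{2}(\aa-\bb))} d\aa} d\bb.
\end{equation*}
Applying Hardy's inequality \eqref{eq:77} a second time (in the $\aa$ variable, with $\bb$ fixed) using the hypothesis $g \in C^0(S^1)\cap C^1(-1,1)$ with $g' \in L^2$, the inner integral in parentheses is bounded by $C\nm{g'}_{L^2}^2$ uniformly in $\bb$. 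This yields
\begin{equation*}
  \nm{[f,g;h]}_{L^2}^2 \lec \nm{f'}_{L^2}^2 \nm{g'}_{L^2}^2 \nm{h}_{L^2}^2,
\end{equation*}
which is exactly \eqref{eq:113}.

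The only point requiring care is making sure the hypotheses needed for Hardy's inequality are present: both $f$ and $g$ must have periodic boundary values (i.e.\ $\bound{f} = \bound{g} = 0$) and $L^2$ derivatives, both of which are built into the statement. Aside from that, the argument is routine, and I do not foresee any real obstacle; the estimate is morally a pointwise-in-$\aa$ application of Hardy followed by a symmetric pointwise-in-$\bb$ application of Hardy.
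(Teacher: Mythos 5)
Your proposal is correct and is essentially the paper's own proof: Cauchy--Schwarz in $\bb$ separating the $f$-quotient from the $(g\text{-quotient})\cdot h$ factor, Hardy's inequality \eqref{eq:77} for the $f$-factor (uniform in $\aa$), Fubini, then Hardy again for the $g$-factor (uniform in $\bb$). The only cosmetic difference is that you absorb the $f$-factor pointwise before integrating in $\aa$, while the paper first takes the $L^2_{\aa}$ norm and then invokes Hardy; since Hardy's bound is uniform in $\aa$, these are the same step.
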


\begin{proof}
By Cauchy-Schwarz inequality,
  \begin{multline}
    \label{eq:114}
    \abs{\int \f{f(\aa) - f(\bb)}{\sin(\f{\pi}{2}(\aa - \bb))} \f{g(\aa) - g(\bb)}{\sin(\f{\pi}{2}(\aa - \bb))} h(\bb) d\bb} \\\le \paren{\int \abs{\f{f(\aa) - f(\bb)}{\sin(\f{\pi}{2}(\aa - \bb))}}^2 d\bb}^{1/2} \paren{ \int  \abs{\f{g(\aa) - g(\bb)}{\sin(\f{\pi}{2}(\aa - \bb))} h(\bb)}^{2} d\bb}^{1/2}.
  \end{multline}
Now we take the $L^2$ of this in the $\alpha'$ variable. By Hardy's inequality \eqref{eq:77}, we control the $f$ factor by $\nm{f'}_{L^2}$, and are left with
\begin{equation}
  \label{eq:115}
  \paren{\int  \int  \abs{\f{g(\aa) - g(\bb)}{\sin(\f{\pi}{2}(\aa - \bb))} h(\bb)}^{2} d\bb d\aa}^{1/2}.
\end{equation}
Applying Fubini's Theorem and then using Hardy's inequality \eqref{eq:77} once more gives the result.
\end{proof}

\begin{proposition}
 There exists a constant $C > 0$ such that for any $f \in C^1(-1,1) \cap C^0(S^1)$ with $f' \in L^2$, $g \in L^2$,  
  \begin{equation}
    \label{eq:32}
    \nm{[f,\HH] g}_{L^\infty} \le C \nm{f'}_{L^2} \nm{g}_{L^2}.
  \end{equation}
 \end{proposition}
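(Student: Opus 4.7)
The plan is to bound the commutator pointwise by Cauchy--Schwarz, treating the cotangent kernel with the factor $f(\aa) - f(\bb)$ as a function in $\bb$ that belongs to $L^2$ uniformly in $\aa$, and testing against $g \in L^2$. First, I would write the commutator explicitly as
\begin{equation*}
[f,\HH]g(\aa) = \f{1}{2i} \pv \int_I \paren{f(\aa) - f(\bb)} \cot\paren{\tfrac{\pi}{2}(\aa - \bb)} g(\bb) \, d\bb,
\end{equation*}
and use the elementary bound $\abs{\cot \th} \le 1/\abs{\sin \th}$ to dominate the absolute value of the integrand by $\abs{f(\aa) - f(\bb)} \abs{g(\bb)}/\abs{\sin(\tfrac{\pi}{2}(\aa - \bb))}$.

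Next, apply Cauchy--Schwarz in $\bb$ to obtain, for each $\aa \in I$,
\begin{equation*}
\abs{[f,\HH]g(\aa)} \lec \paren{\int_I \f{\abs{f(\aa) - f(\bb)}^2}{\sin^2(\tfrac{\pi}{2}(\aa - \bb))} \, d\bb}^{1/2} \nm{g}_{L^2}.
\end{equation*}
Since $f \in C^0(S^1) \cap C^1(-1,1)$ with $f' \in L^2$ (so in particular $\bound{f} = 0$), Hardy's inequality \eqref{eq:77} bounds the first factor by $C \nm{f'}_{L^2}$ uniformly in $\aa$. Taking the supremum over $\aa \in I$ then yields
\begin{equation*}
\nm{[f,\HH]g}_{L^\infty} \le C \nm{f'}_{L^2} \nm{g}_{L^2},
\end{equation*}
which is the desired estimate.

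There is no real obstacle here; the only subtlety is ensuring that the Hardy-type bound \eqref{eq:77} is applicable uniformly in $\aa$, which is exactly the content of that proposition given the periodic boundary condition on $f$. The principal value is harmless because once we take absolute values under the integral, the kernel $\abs{f(\aa) - f(\bb)}/\abs{\sin(\tfrac{\pi}{2}(\aa - \bb))}$ is in $L^2_\bb$ by Hardy, so the Cauchy--Schwarz estimate is justified without reference to the principal value structure.
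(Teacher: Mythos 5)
Your proof is correct and is precisely the paper's argument: the paper dispatches \eqref{eq:32} with the one-line remark that it follows from Cauchy--Schwarz and Hardy's inequality \eqref{eq:77}, which is exactly the two-step argument you spell out (Cauchy--Schwarz in $\bb$ followed by the uniform-in-$\aa$ Hardy bound on the kernel factor). Your additional observations---dominating $\abs{\cot}$ by $1/\abs{\sin}$ and noting the principal value is immaterial once absolute values are taken---are the routine justifications left implicit in the paper.
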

\begin{proof}
  Estimate \eqref{eq:32} holds by Cauchy-Schwarz inequality and Hardy's inequality \eqref{eq:77}.
\end{proof}

\begin{proposition}
 There exists a constant $C > 0$ such that for any $f, g \in C^1(-1,1) \cap C^0(S^1)$ with $f', g' \in L^2$, and $h \in L^2$, 
\begin{equation}
  \label{eq:144}
  \nm{\partial_\aa [f,[g,\HH]] h}_{L^2} \le C \nm{f'}_{L^2} \nm{g'}_{L^2} \nm{h}_{L^2}.
\end{equation}
\end{proposition}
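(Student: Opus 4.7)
The plan is to expand the double commutator into an explicit integral representation, differentiate under the integral sign, and then match each resulting piece with one of the previously established commutator estimates.

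First I would write out
\begin{equation*}
  [f,[g,\HH]]h(\aa) \;=\; \f{1}{2i}\int \cot\!\paren{\f{\pi}{2}(\aa-\bb)}\,(f(\aa)-f(\bb))(g(\aa)-g(\bb))\,h(\bb)\,d\bb,
\end{equation*}
which is a straightforward computation from the definition $\HH f(\aa) = \f{1}{2i}\pv\int\cot(\f\pi2(\aa-\bb))f(\bb)\,d\bb$, writing out $[g,\HH]h = g\HH h - \HH(gh)$ and then taking the commutator with $f$. Because $\bound{f}=\bound{g}=0$, this identity causes no subtle boundary issues.

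Next I would differentiate the kernel in $\aa$, producing three terms:
\begin{equation*}
  \partial_\aa [f,[g,\HH]]h \;=\; -[f,g;h] \;+\; f'\,[g,\HH]h \;+\; g'\,[f,\HH]h,
\end{equation*}
using $\partial_\aa\cot(\f\pi2(\aa-\bb)) = -(\pi/2)/\sin^2(\f\pi2(\aa-\bb))$ for the first term (which is exactly the Calderon-type commutator defined in \eqref{eq:624}) and recognizing the remaining two pieces directly from the integral formulas.

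Finally, I would bound each term by one of the estimates already proved. The first is controlled by the triple commutator estimate \eqref{eq:113}:
\begin{equation*}
  \nm{[f,g;h]}_{L^2}\lec \nm{f'}_{L^2}\nm{g'}_{L^2}\nm{h}_{L^2}.
\end{equation*}
For the second and third, I take $L^2$ in $\aa$ and pull out $f'$ (resp.\ $g'$) in $L^2$ against the $L^\infty$ norm of the inner commutator, and apply the $L^\infty$ commutator estimate \eqref{eq:32}:
\begin{equation*}
  \nm{f'[g,\HH]h}_{L^2}\le \nm{f'}_{L^2}\nm{[g,\HH]h}_{L^\infty}\lec \nm{f'}_{L^2}\nm{g'}_{L^2}\nm{h}_{L^2},
\end{equation*}
and symmetrically for the $g'[f,\HH]h$ term. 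Summing the three contributions gives the desired bound \eqref{eq:144}. There is no real obstacle here: the only things to verify are that $\bound{f}=\bound{g}=0$ is exactly the hypothesis required to apply \eqref{eq:113} and \eqref{eq:32}, and that the differentiation under the integral sign is justified (which follows from the $L^2$ regularity of $f'$, $g'$ together with Hardy's inequality \eqref{eq:77} controlling the singular pieces).
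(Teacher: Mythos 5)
Your proof is correct and follows essentially the same route as the paper: expand the double commutator into a single integral with kernel $(f(\aa)-f(\bb))(g(\aa)-g(\bb))\cot(\frac{\pi}{2}(\aa-\bb))$, differentiate to produce $f'[g,\HH]h+g'[f,\HH]h-[f,g;h]$, and then apply H\"older together with \eqref{eq:32} for the first two terms and \eqref{eq:113} for the third.
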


\begin{proof}
We differentiate:
\begin{multline}
  \label{eq:363}
  \partial_\aa  \f{1}{\hdenomconst} \int (f(\aa) - f(\bb)) (g(\aa) - g(\bb)) \cot(\f{\pi}{2}(\aa -\bb)) h(\bb) d\bb
\\ = f' [g,\HH] h + g'[f,\HH] h  -  \f{1}{\hdenomconst} \int \f{\pi}{2}\f{(f(\aa) - f(\bb))(g(\aa) - g(\bb))}{\sin^2(\f{\pi}{2}(\aa -\bb))} h(\bb) d\bb.
\end{multline}
We control the first two terms by H\"older's inequality and then \eqref{eq:32}. We control the last term by \eqref{eq:113}.
\end{proof}

\begin{proposition}[Higher-Order Calderon Commutator \cite{coifman-meyer}]
There exists a constant $C > 0$ such that for any $f \in C^1(-1,1) \cap C^0(S^1)$ with $f' \in L^\infty$, $h \in C^0(S^1) \cap C^1(-1,1)$ with $h' \in L^p$ for some $p > 1$, 
  \begin{equation}
  \label{eq:6a}
  \nm{[f,f; \partial_\aa h]}_{L^2} \le C \nm{f'}_{L^\infty}^2 \nm{h}_{L^2}.
\end{equation}
\end{proposition}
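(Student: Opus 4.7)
The operator $[f,f;\partial_\aa h]$ is the torus analogue of the classical second-order Calder\'on commutator, so the plan is to reduce the estimate to its real-line version via the periodization identity \eqref{eq:286} and then invoke the Calder\'on--Coifman--Meyer theorem. This follows the same scheme used to prove Propositions~\ref{prop-l2-linfty-hh} and \ref{prop-linfty-l2-hh}, where the periodic Hilbert transform was unfolded to the real line by the same identity.

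First I would periodically extend $f$ and $h$ from $I$ to $\mathbb{R}$, obtaining $\tilde f$ and $\tilde h$. The hypotheses $\bound{f}=0$, $f\in C^0(S^1)\cap C^1(-1,1)$ with $f'\in L^\infty$ imply that $\tilde f$ is Lipschitz on $\mathbb{R}$ with $\|\tilde f'\|_{L^\infty(\mathbb{R})}=\|f'\|_{L^\infty(I)}$, while $\bound{h}=0$ makes $\tilde h$ a continuous periodic function. Substituting \eqref{eq:286} into the definition \eqref{eq:624} of $[f,f;\partial_\aa h]$ and changing variables $\gamma=\bb-2l$ in each summand of the resulting double sum unfolds everything into a single integral over $\mathbb{R}$:
\begin{equation*}
  [f,f;\partial_\aa h](\aa)=\frac{1}{i\pi}\int_{\mathbb{R}}\frac{(\tilde f(\aa)-\tilde f(\gamma))^2}{(\aa-\gamma)^2}\,\tilde h'(\gamma)\,d\gamma,\qquad\aa\in I.
\end{equation*}

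The right-hand side is exactly the standard second-order Calder\'on commutator on $\mathbb{R}$ applied to $\tilde h$. Classically \cite{calderon,coifman-meyer}, this operator is $L^2(\mathbb{R})$-bounded with operator norm $\lec\|\tilde f'\|_{L^\infty}^2$. The proof proceeds by an integration by parts in $\gamma$ to shift $\partial_\gamma$ off $\tilde h$, which decomposes the integrand into a linear combination of the first-order commutator $\int(F(x)-F(y))/(x-y)^2\cdot(F'(y)h(y))\,dy$ and the second-order commutator $\int(F(x)-F(y))^2/(x-y)^3\cdot h(y)\,dy$, each of which is $L^2$-bounded with the appropriate power of $\|F'\|_{L^\infty}$. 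Restricting the resulting real-line $L^2$ estimate to $\aa\in I$ gives \eqref{eq:6a}.

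The main technical obstacle is that the periodic extension $\tilde h$ is not in $L^2(\mathbb{R})$, so the Calder\'on--Coifman--Meyer bound cannot be applied verbatim. I would handle this by a standard localization: write $\tilde h=\chi\tilde h+(1-\chi)\tilde h$ for a smooth cutoff $\chi$ supported on $[-5,5]$ with $\chi\equiv 1$ on $[-3,3]$. The compactly supported piece $\chi\tilde h$ lies in $L^2(\mathbb{R})$ with $\|\chi\tilde h\|_{L^2(\mathbb{R})}\lec\|h\|_{L^2(I)}$ (by periodicity and the fixed number of periods in the support of $\chi$), so the classical $L^2$ bound applies directly. For the tail $(1-\chi)\tilde h$ the distance $|\aa-\gamma|$ stays bounded below by a positive constant whenever $\aa\in I$, so after integration by parts the kernel and all its $\gamma$-derivatives decay like $|\aa-\gamma|^{-2}$ or faster; Cauchy--Schwarz on each period of $\tilde h$ and summation in $|l|$ using this $|\gamma|^{-2}$ decay then yield an $L^\infty(I)$ (hence $L^2(I)$) bound of the form $\|f'\|_{L^\infty}^2\|h\|_{L^2(I)}$. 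Adding the two contributions completes the proof.
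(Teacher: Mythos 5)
Your proposal is correct and follows essentially the same route as the paper: integrate by parts to shift the derivative off $h$, use the periodization identity \eqref{eq:286} (and its $\aa$-derivative) to unfold the periodic kernel into the real-line first- and second-order Calder\'on commutators, and invoke \cite{calderon} and \cite{coifman-meyer}. You perform the unfolding before the integration by parts rather than after, and you spell out the cutoff argument needed because the periodic extension of $h$ is not in $L^2(\R)$ — a point the paper leaves to the analogous proof of \eqref{eq:305} and the reference \cite{kinsey} — but the substance is identical.
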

\begin{proof}
The proof is entirely analogous to the proof of \eqref{eq:305}, and now follows from the work of \cite{coifman-meyer}, which extends the original result of \cite{calderon} used for \eqref{eq:305} to allow two difference quotient factors, instead of one. To move from $\R$ to our compact domain, we do the same infinite summation argument, with the only difference being that instead of \eqref{eq:286},  we use $(\f\pi 2)^2\partial_\aa \f{1}{\sin^2(\f{\pi}{2}\aa)} =  -2\sum \f{1}{(\aa + 2l)^3}$.
\end{proof}

\subsection{The \texorpdfstring{$\dot{H}^{1/2}$}{dot H 1/2} norm}\label{sec:half}
We present here the following proposition.
\begin{proposition}\label{prop:half}
  Let $f \in C^1(-1,1) \cap C^0(S^1)$ with $f' \in L^2$. Then 
\begin{equation}
  \label{eq:549}
  (I-\HH)f = \avg f \To \nm{f}_{\dot{H}^{1/2}}^2 =  \int i (\partial_\aa f) \bar{f} d\aa.
\end{equation}
\end{proposition}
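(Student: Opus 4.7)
The plan is to convert the LHS into a symmetric double integral that matches the definition of $\dot H^{1/2}$. Three ingredients will carry the proof: (i) the hypothesis $(I-\HH)f=\avg f$ gives $\HH f=f-\avg f$, so combined with $\bound f=0$ and the commutator identity $[\partial_\aa,\HH]f=0$ from \eqref{eq:176} one has $\partial_\aa f=\HH(\partial_\aa f)$; (ii) integration by parts in the $\bb$ variable against the kernel of $\HH$, using $\partial_\bb\cot(\tfrac\pi2(\aa-\bb))=\tfrac{\pi/2}{\sin^2(\tfrac\pi2(\aa-\bb))}$, produces exactly the kernel appearing in $\nm{\cdot}_{\dot H^{1/2}}$; and (iii) symmetrizing in $(\aa,\bb)$ turns the resulting bilinear form into $\abs{f(\aa)-f(\bb)}^2$.

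First I would substitute $\partial_\aa f=\HH(\partial_\aa f)$ and spell out the integral to obtain
\begin{equation*}
\int i(\partial_\aa f)\bar f\,d\aa = \tfrac12\iint_{I\times I}\pv\cot\!\bigl(\tfrac\pi2(\aa-\bb)\bigr)\,\partial_\bb f(\bb)\,\bar f(\aa)\,d\bb\,d\aa.
\end{equation*}
Then I would integrate by parts in $\bb$, writing $\partial_\bb f(\bb)=\partial_\bb(f(\bb)-f(\aa))$ so that each boundary contribution stays bounded. The boundary terms at $\bb=\pm1$ cancel, since $\cot(\tfrac\pi2(\aa-1))=\cot(\tfrac\pi2(\aa+1))$ (period $\pi$) and $f(1)=f(-1)$; and the pv cutoff contributions at $\bb=\aa\pm\epsilon$ sum to $\cot(\tfrac\pi2\epsilon)[f(\aa+\epsilon)+f(\aa-\epsilon)-2f(\aa)]=O(\epsilon)$ by Taylor expansion, hence vanish as $\epsilon\to0$. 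What remains is
\begin{equation*}
\int i(\partial_\aa f)\bar f\,d\aa = -\tfrac12\iint \tfrac{\pi/2}{\sin^2(\tfrac\pi2(\aa-\bb))}\,(f(\bb)-f(\aa))\,\bar f(\aa)\,d\bb\,d\aa.
\end{equation*}

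To finish, I would swap $\aa\leftrightarrow\bb$ (the kernel is symmetric) and average this with the previous identity. The two integrands combine into $-\tfrac12(f(\aa)-f(\bb))(\bar f(\bb)-\bar f(\aa))=\tfrac12\abs{f(\aa)-f(\bb)}^2$, yielding exactly $\tfrac\pi8\iint\abs{f(\aa)-f(\bb)}^2/\sin^2(\tfrac\pi2(\aa-\bb))\,d\aa\,d\bb=\nm f^2_{\dot H^{1/2}}$. The only nontrivial point is justifying that the pv cutoffs contribute nothing during the $\bb$-integration by parts; I expect this to be the main (mild) obstacle, but it is handled by the $O(\epsilon)$ estimate above. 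Once that is in hand, the rest is algebraic symmetrization.
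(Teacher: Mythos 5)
Your proof is correct and matches the paper's approach: both replace $\partial_\aa f$ by $\HH\partial_\aa f$ (using $(I-\HH)f=\avg f$, $\bound{f}=0$ and $[\partial_\aa,\HH]f=0$) and then identify $\int i(\partial_\aa\HH f)\bar f\,d\aa$ with the $\dot H^{1/2}$ seminorm. The paper simply cites the standard identification $i\,\partial_\aa\HH=|D|$ and the formula $\langle|D|f,f\rangle=\nm{f}_{\dot H^{1/2}}^2$ as known, whereas you verify it directly by integration by parts in $\bb$ (with the subtraction $\partial_\bb f(\bb)=\partial_\bb(f(\bb)-f(\aa))$ to tame the $1/\sin^2$ kernel and with the $O(\e)$ control of the $\pv$ cutoff boundary terms) followed by symmetrization in $(\aa,\bb)$; this supplies the computation the paper leaves to the reader.
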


\eqref{eq:549} holds because for $f$ satisfying the assumption of Proposition~\ref{prop:half}, 
\begin{equation}
  \label{eq:26}
  \int i (\partial_\aa f) \bar{f} d\aa= \int i (\partial_\aa \HH f) \bar{f} d\aa, 
\end{equation}
and $i \,\partial_\aa \HH f=|D|f$, where $|D|$ is the positive operator satisfying $|D|^2=-\partial^2_\aa$.
It is easy to see that $\int i (\partial_\aa f) \bar{f} d\aa$ is real-valued by integration by parts.

\subsection{Commutator identities}\label{sec:33}
We include here for reference the various commutator identities that are necessary.
\begin{align}
  \label{eq:120}
  [\partial_t,D_\a] &= - (D_\a z_t) D_\a;\\ 
  \label{eq:121}
    \bracket{\partial_t,D_\a^2} &   = [\partial_t,D_\a] D_\a + D_\a [\partial_t,D_\a]
        = -2(D_\a z_t) D_\a^2 - (D_\a^2 z_t) D_\a;\\
  \label{eq:212}
     \bracket{\partial_t^2,D_\a} &=\partial_t\bracket{\partial_t,D_\a} + \bracket{\partial_t,D_\a}\partial_t= (-D_\a z_{tt}) D_\a + 2(D_\a z_t)^2 D_\a - 2(D_\a z_t) D_\a \partial_t.
  \end{align}
To calculate $[i\mathfrak{a}\partial_\a,D_\a]$, we use $i\mathfrak{a} z_\a = z_{tt} + i$ \eqref{eq:8} to rewrite $i\mathfrak{a} \partial_\a = i \mathfrak{a} z_\a D_\a = (z_{tt} + i) D_\a$. Therefore
\begin{equation}
  \label{eq:47}
  [i\mathfrak{a}\partial_\a,D_\a] = [(z_{tt}+i)D_\a,D_\a] = -(D_\a z_{tt}) D_\a.
\end{equation}
Adding \eqref{eq:212} and \eqref{eq:47}, we conclude that
\begin{equation}
  \label{eq:53}
   \bracket{\partial_t^2 + i \mathfrak{a} \partial_\a, D_\a} =  (-2D_\a z_{tt}) D_\a + 2(D_\a z_t)^2 D_\a - 2(D_\a z_t) D_\a \partial_t.
\end{equation}

Because $[(\partial_t^2 + i \af \partial_\a),D_\a^2] = [(\partial_t^2 + i \af \partial_\a),D_\a] D_\a + D_\a[(\partial_t^2 + i \af \partial_\a),D_\a],$ we have
\begin{equation}
  \label{eq:52}
  \begin{aligned}
    \bracket{(\partial_t^2 + i \af \partial_\a),D_\a^2} & = 
(-4D_\a z_{tt}) D_\a^2 + 4(D_\a z_t)^2 D_\a^2 - 2(D_\a z_t) D_\a \partial_t D_\a - (2D_\a^2 z_{tt}) D_\a
\\ & + 4(D_\a z_t) (D_\a^2 z_t) D_\a - 2(D_\a^2 z_t) D_\a \partial_t - 2(D_\a z_t) D_\a^2 \partial_t.
\end{aligned}
\end{equation}

\section{Summary of notation}\label{notation}
We list here the various notations we've introduced in the paper. See also \S \ref{sec:5} for a discussion of the conventions used.

\begin{itemize}
\item $\bound{f} := f(1,t) - f(-1,t)$.
\item  $\ang$ is the angle the water wave makes with the wall. See Figure~\ref{fig:b}.
\item $I := [-1,1]$ (except when it's used for the identity or as an abbreviation for a quantity to be controlled).
\item $\Re z$, $\Im z$ are the real and imaginary parts of a complex number $z$.
\item  Function spaces $C^k(-1,1), C^k[-1,1], C^k(S^1)$, $H^k(S^1)$,  $L^p$, and $\dot{H}^{1/2}$    are defined in \S \ref{sec:5}. We define $\nm{f}_{\dot{H}^{1/2}} := \paren{\frac\pi 8\iint \f{\abs{f(\aa) - f(\bb)}^2}{\sin^2(\f{\pi}{2}(\aa -\bb))} d\aa d\bb}^{1/2}$.
\item $\avg f = \avg_I f  := \f{1}{2} \int_I f(\bb) d\bb$.
\item $z(\a,t)$ is the Lagrangian parametrization, and $z_t(\a,t) = \vec{v}(z(\a,t),t)$ is the velocity. $z_{tt} = \partial_t z_t$, etc.
\item $\af = \f{\abs{\bar{z}_{tt} - i}}{\abs{z_\a}} = -\f{\partial P}{\partial \vec{n}} \f{1}{\abs{z_\a}}$. 
We refer to $-\f{\partial P}{\partial \vec{n}}$ as the Taylor coefficient; $\vec{n}$ is the outward-facing normal to $\Sigma(t)$.
\item $h: \a \mapsto \aa$ is defined by $h(\a) = \Phi(z(\a,t),t)$ and gives the Riemann mapping variables, where $\Phi$ is the Riemann mapping defined in \S \ref{sec:12a}.  $\partial_\aa (f \circ h\i) = \f{\partial_\a f}{h_\a} \circ h\i$.  $d\aa = h_\a d\a$. Full details are in \S \ref{sec:12a}.
\item $\alpha$ and $\beta$ are our variables in Lagrangian coordinates; $\aa$ and $\bb$ are our variables in Riemann mapping coordinates.
\item $\HH$ is the Hilbert transform in Riemann mapping variables, defined by
  \begin{equation}
    \label{eq:665}
    \HH f(\aa) := \f{1}{\hdenomconst}  \int_I \cot(\f{\pi}{2}(\aa-\bb)) f(\bb) d\bb.
  \end{equation}
\item We define $\P_A := \f{(I-\HH)}{2}$ and $\P_H := \f{(I+\HH)}{2}$ as the antiholomorphic and holomorphic projections.
\item $[f,g;h](\aa) := \frac\pi{4i}\int \f{(f(\aa) - f(\bb))(g(\aa) - g(\bb))}{\sin^2(\f{\pi}{2}(\aa - \bb))} h(\bb) d\bb$, the higher-order Calderon commutator.
\item We use $F(z(\a,t),t) := \bar{z}_t(\a,t)$ at several points (and do not use $F$ for any other purpose). 
\item $Z := z \circ h\i, Z_t := z_t \circ h\i, Z_{tt} := z_{tt} \circ h\i$, $Z_{,\aa} = \partial_\aa(z \circ h\i), Z_{t,\aa} = \partial_\aa(z_t \circ h\i)$, etc.  
\item Compositions and inverses are always with respect to the spatial variable.
\item $\AA := (\af h_\a) \circ h\i$, $\AAt := (\af_t h_\a) \circ h\i$.
 \item $D_\a := \f{1}{z_\a} \partial_\a$, $\abs{D_\a} := \f{1}{\abs{z_\a}} \partial_\a$, $D_\aa := \f{1}{\abs{Z_{,\aa}}} \partial_\aa$, $\abs{D_\aa} := \f{1}{\abs{Z_{,\aa}}} \partial_\aa$.
\item $A_1 = \AA \abs{Z_{,\aa}}^2 = i Z_{,\aa}(\bar{Z}_{tt} - i) \in \R$ \eqref{eq:208}. On changing variables, we have 
  \begin{equation}
    \label{eq:663}
    A_1 \circ h = \f{\af \abs{z_\a}^2}{h_\a},
  \end{equation}
  originally derived at \eqref{eq:209}; we use this repeatedly without citation.  We  often use $A_1 \ge 1$ \eqref{eq:394}, and also use $\f{1}{Z_{,\aa}} = i \f{\bar{Z}_{tt} - i}{A_1}$ \eqref{eq:251}.
\item We define our energies in \S \ref{sec:35}. We define generic energies $E_{a,\th}$ and $E_{b,\th}$, and then specialize to $E_a := E_{a,D_\a^2 \bar{z}_t}$ and $E_b := E_{b,D_\a \bar{z}_t}$.  We use $G_\th$ to describe the RHS of the equation $(\partial_t^2 + i \af \partial_\a)\th  = G_\th$. For $\th = D_\a^k \bar{z}_t$, $G_\th = D_\a^k(-i\mathfrak{a}_t \bar{z}_\a) + [\partial_t^2 + i \mathfrak{a}\partial_\a,D_\a^k] \bar{z}_t.$
\item $\Th := \th \circ h\i$;  $B := \paren{\f{h_{t\a}}{h_\a} - \Re D_\aa z_t} \circ h\i$; $\psi := \paren{\f{h_\a}{z_\a} \th} \circ h\i$ \eqref{eq:83}.
\item See \S \ref{sec:5} for a discussion of how broadly $I, II, I_1, I_{12}$, etc., are defined. In short, they are unambiguous within each section, but ambiguous between sections. \item We use $C(E)$ to represent a polynomial of the energy $E$.
\end{itemize}

\section{Main quantities controlled} \label{quantities}
We list here the various quantities that are controlled by our energy, for ease of reference.  We don't list every single quantity we have controlled, but we do include any quantities that we give at the end of a concluding inequality without further explanation.

\begin{itemize}
\item 
In \S \ref{sec:quants}, we controlled 
\begin{equation}\label{eq:1550}
\begin{aligned}
&\nm{(\partial_t+\frak b\partial_\aa)D_\aa^2\bar Z_t}_{L^2}, \nm{D_\aa^2 \bar{Z}_{tt}}_{L^2}, \nm{D_\aa^2 {Z}_{tt}}_{L^2},  \nm{D_\aa^2 \bar{Z}_t}_{L^2}, \nm{D_\aa^2 {Z}_t}_{L^2}, \\&\nm{D_\a \partial_t D_\a \bar{z}_t}_{L^2({h_\a}d\a)}, \nm{\f{1}{Z_{,\aa}} D_\aa^2 \bar{Z}_t}_{\dot{H}^{1/2}}, \nm{D_\aa \bar{Z}_{tt}}_{L^\infty}, \nm{D_\aa {Z}_{tt}}_{L^\infty},  \nm{D_\aa \bar{Z}_t}_{L^\infty}, \nm{D_\aa {Z}_t}_{L^\infty},\\&  \nm{ \bar{Z}_{tt, \aa}}_{L^2},   \nm{ \bar{Z}_{t, \aa}}_{L^2},  \int \abs{D_\a \bar{z}_t}^2 \f{d\a}{\af},\  \int \abs{D_\a \bar{z}_{tt}}^2 \f{d\a}{\af},  \ \nm{\f{1}{Z_{,\aa}}}_{L^\infty}, \nm{Z_{tt}+i}_{L^\infty} , \nm{A_1}_{L^\infty}.
\end{aligned}
\end{equation}

\item $\nm{\f{\af_t}{\af}}_{L^\infty} = \nm{\f{\AAt}{\AA}}_{L^\infty}$ is controlled at \eqref{eq:424} in \S \ref{sec:at-a}.
\item $\nm{\partial_\aa \f{1}{Z_{,\aa}}}_{L^2}$ is controlled at \eqref{eq:425} in \S \ref{sec:44}.
\item $\nm{\f{h_{t\a}}{h_\a}}_{L^\infty}$ is controlled at \eqref{eq:451}, 
 $\nm{(I+\HH) D_\aa Z_t}_{L^\infty}$ is controlled at \eqref{eq:170} in \S \ref{sec:45c}.
\item $\nm{D_\aa \f{1}{Z_{,\aa}}}_{L^\infty}$ is controlled at \eqref{eq:1023} in \S \ref{sec:46}. The related terms $\nm{(Z_{tt} + i) \partial_\aa \f{1}{Z_{,\aa}}}_{L^\infty}$ and $\nm{\partial_\aa\f{Z_{tt} + i} {Z_{,\aa}}}_{L^\infty}$ are also estimated there.
\item $\nm{\partial_\aa \P_A \f{Z_t}{Z_{,\aa}}}_{L^\infty}$ is controlled at \eqref{eq:513} in \S \ref{sec:45c}. The related term $\nm{\P_A \paren{Z_t \partial_\aa \f{1}{Z_{,\aa}}}}_{L^\infty}$ we estimated at \eqref{eq:151}.
\item
$\nm{D_{\aa} B}_{L^2}$ is controlled at \eqref{eq:546}. 
\end{itemize}

\end{appendices}

\phantomsection
\addcontentsline{toc}{section}{References}


\Addresses

 \end{document}